\documentclass[a4paper,english,oneside,11pt]{article}
\usepackage{goodstyle}

\title{Morse Theoretic Signal Compression and Reconstruction on Chain Complexes}

\author[1]{Stefania Ebli \thanks{stefania.ebli@epfl.ch}}
\author[1]{Celia Hacker \thanks{celia.hacker@epfl.ch}}
\author[1]{Kelly Maggs \thanks{kelly.maggs@epfl.ch}}
\affil[1]{Laboratory for Topology and Neuroscience\\

École Polytechnique Fédérale de Lausanne (EPFL)}
\date{}
\begin{document}

\maketitle

\begin{abstract}

At the intersection of Topological Data Analysis (TDA) and machine learning, the field of cellular signal processing has advanced rapidly in recent years. In this context, each signal on the cells of a complex is processed using the combinatorial Laplacian, and the resultant Hodge decomposition. Meanwhile, discrete Morse theory has been widely used to speed up computations by reducing the size of complexes while preserving their global topological properties.

In this paper, we provide an approach to signal compression and reconstruction on chain complexes that leverages the tools of algebraic discrete Morse theory. 
The main goal is to reduce and reconstruct a based chain complex together with a set of signals on its cells via deformation retracts, preserving as much as possible the global topological structure of both the complex and the signals.

We first prove that any deformation retract of real degree-wise finite-dimensional based chain complexes is equivalent to a Morse matching. We will  then study how the signal changes under particular types of Morse matching, showing its reconstruction error is trivial on specific components of the Hodge decomposition. Furthermore, we provide an algorithm to compute Morse matchings with minimal reconstruction error.
\end{abstract}

\section{Introduction}

The analysis of signals supported on topological objects such as graphs or simplicial complexes is a fast-growing field combining techniques from topological data analysis, machine learning and signal processing \cite{ortega2018graph,robinson2014topological,barbarossa2018learning}. The emerging field of simplicial and cellular signal processing falls within this paradigm \cite{barbarossa2020topological,schaub2021signal,schaub2021cells}, and here the combinatorial Laplacian $\Delta_n$ plays a pivotal role. In this context, a signal takes the form of a real-valued chain (or cochain) on a chain complex $(\C,\D)$ endowed with a degree-wise inner product. In particular, the eigenvectors of $\Delta_n$, called the Hodge basis, serve as a `topological' Fourier basis to transform a signal into a topologically meaningful coordinate system \cite{ebli2020simplicial,schaub2021signal}. Additionally, the combinatorial Laplacian gives rise to the combinatorial Hodge decomposition~\cite{Eckmann1944}:
$$\C_n = \Ima \D_{n+1} \oplus \Ker \Delta_n \oplus \Ima 
\D_n^\dagger,$$ the components of which each have their own 
topological interpretation
\cite{barbarossa2020topological} and respect the eigendecomposition of $\Delta_n$.

The goal of the paper is to investigate signal compression and reconstruction over cell complexes by combining tools of Hodge theory and discrete Morse theory.
We take an entirely algebraic approach to this problem, working at the level of degree-wise finite-dimensional \textit{based} chain complexes endowed with inner products. 
The classical example is the chain complex of a cell complex equipped with its canonical cellular basis, but more general constructions such as cellular sheaves fit into this framework as well. 
This algebraic perspective not only gives us greater flexibility, but also helps to illuminate connections between Hodge theory and discrete Morse theory that occur only at the level of chain complexes. 

Our approach to compressing and reconstructing signals over complexes involves deformation retracts of based chain complexes, which have the advantage of reducing the size of complexes while preserving their homology. A deformation retract of a chain complex $\C$ onto $\mathbf{D}$ consists of a pair of chain maps $(\Psi, \Phi)$

\begin{center}
    \begin{tikzcd}
        \mathbf{D} \ar[r, shift right, "\Phi"'] & \C \ar[l, shift right, "\Psi"'] \ar[l, loop right, "h"]
    \end{tikzcd}
\end{center}
such that $\Psi \Phi = \id_{\mathbf{D}}$ and a chain homotopy $h : \C \to \mathbf{D}$ between $\Phi\Psi$ and $\id_\C$. In this context, the map $\Psi$ is used to compress the signal $s$ onto the reduced complex $\mathbf{D}$, and $\Phi$ serves to reconstruct it back in $\C$. 
Thus, for every $s\in\C$ one can compute the difference $\Phi \Psi s -s$, called the \emph{topological reconstruction error}, to understand and evaluate how compression and reconstruction changes the signal. 
Among the many topological methods to reduce the size of complexes \cite{robins2011,Singh2007}, discrete Morse theory \cite{Forman1998,Forman2002} provides the perfect tool to efficiently generate such deformation retracts of chain complexes. This technique has already been used with great success in the compression of 3D images \cite{robins2011}, persistent homology \cite{vidit} and cellular sheaves \cite{Curry2016}. In this paper we utilise Sk{\"o}ldberg's \textit{algebraic} version of discrete Morse theory \cite{Sko05, Sko18}. It takes as input a based chain complex $\C$ and, by reducing its based structure with respect to a Morse matching $M$, returns a smaller, chain-equivalent complex $\C^M$.
The first result presented in this article connects the Hodge decomposition of a complex with discrete Morse theory by defining a natural pairing in the Hodge basis. In particular, we show that \textit{any} deformation retract $(\Psi,\Phi,h)$ of degree-wise finite-dimensional, based chain complexes of real inner product spaces can be obtained from a Morse matching over the Hodge basis of a certain sub-complex. This process, called the \textit{Morsification} of $(\Psi,\Phi,h)$, is described in Theorem \ref{morsification}.
In the second part of the paper, we study how the topological reconstruction error associated to a deformation retract $(\Psi,\Phi,h)$ is distributed amongst the three components of the Hodge decomposition. We define a class of deformation retracts $(\Psi,\Phi, h )$, called \emph{$(n,n-1)$-free}, for which the topological reconstruction error has trivial (co)cycle reconstruction. Specifically, they are characterised by the following properties (Theorem \ref{thm:reconstrcution}).
\begin{enumerate}
        \item (Cocycle Reconstruction) A signal $s \in \C_n$ and its reconstruction $\Phi \Psi s$ encode the same cocycle information:
        \begin{center}
            $\mathrm{Proj}_{\Ker \D_{n+1}^\dagger} (\Phi \Psi s - s) = 0$ for all $s \in \C_n$.
        \end{center}
        \item (Cycle Reconstruction) A signal $s \in \C_{n-1}$ and the adjoint of the reconstruction $\Psi^\dagger \Phi^\dagger s$ have the same cycle information:
         \begin{center}
        $\mathrm{Proj}_{\Ker \D_{n-1}} (\Psi^\dagger \Phi^\dagger s - s) = 0$ for all $s \in \C_{n-1}$.
        \end{center}
\end{enumerate}
\noindent Moreover, the Morsification concept defined above simplifies many of the proofs and allows them to be extended into a more general framework (Corollary \ref{cor:weak-reconstruction}).

Finally, we study how the topological reconstruction error of $(n,n-1)$-free deformation retracts can be minimized while maintaining (co)cycle reconstruction. We develop an iterative algorithm to find the retract $(\Psi,\Phi)$ that minimizes the norm of the topological reconstruction error for a given signal $s\in\C$. Our algorithm is inspired by the reduction pair algorithms in  \cite{kaczynski1998homology,vidit,Curry2016} and, like these algorithms, computes a single Morse matching at each step with the additional requirement of minimizing the norm. We show that its computational complexity is linear when the complex is sparse, and discuss bounds on how well the iterative process approximates the optimal deformation retract. Finally, we show computationally that iterating single optimal collapses leads to topological reconstruction loss that is significantly lower than that arising from performing sequences of random collapses.

The paper is structured as follows. In Section \ref{sec:background}, we present the necessary background in algebraic topology, discrete Hodge theory, and algebraic discrete Morse theory, giving the definitions and main results that will be used throughout the paper. 
Section \ref{sec:morsification} introduces the notion of Hodge matching, which allows us to prove that every deformation retract of a degree-wise finite-dimensional based chain complex $\C$ of real inner product spaces is equivalent to a Morse retraction (see Morsification Theorem \ref{morsification}).
In Section \ref{sec:preservation} we investigate the interaction between deformation retracts and Hodge theory. The main results, Theorem \ref{thm:reconstrcution} and Corollary \ref{cor:weak-reconstruction}, utilise the Morsification theorem to prove that $(n,n-1)$-free (sequential) Morse matchings preserve (co)cycles. Section \ref{subsec:sparsification} presents an additional result that explains how the reconstruction $\Phi\Psi s$ can be understood as a sparsification of the signal $s$ (see Lemma \ref{lem:sparsification}).
Finally, Section \ref{sec:algo} is dedicated to presenting algorithms to minimize the topological reconstruction error in case of iterative single pairings (see Algorithms \ref{alg:up} and \ref{alg:k-up}).
\paragraph{Related Work.} Many articles incorporate topology into the loss or reconstruction error function \cite{pmlr-v108-gabrielsson20a,Kim2020,carriere2020,Moor2020}, however, these deal almost exclusive with point cloud data. At the same time discrete Morse theory has been used in conjunction with machine learning in \cite{Xiaoling} for image processing, but not in the context of reconstruction error optimisation. 

The notion of taking duals (over $\Z$) of discrete Morse theoretic constructions is featured in \cite{Forman2002}. There, the dual flow is over $\Z$, whereas we work with adjoint flow over $\R$, for which the orthogonality considerations are somewhat different, as discussed in Appendix \ref{adjoint_retraction_appendix}.

 On the computational side, the articles \cite{vidit, Curry2016, kaczynski1998homology, kaczynski2006computational} involve algorithms to reduce chain complexes over arbitrary PIDs, including those of cellular sheaves but do not investigate the connection with the combinatorial Laplacian (or sheaf Laplacian). Our algorithms are based on the coreduction algorithms of \cite{kaczynski1998homology,kaczynski2006computational}, with the additional requirement of a topological loss minimization.

To the best of our knowledge, the only other contemporary work that examines the link between the combinatorial Hodge decomposition and discrete Morse theory is \cite{contreras2021discrete}, linking the coefficients of the characteristic equation of $\Delta_n$ to the $n$-dimensional paths in an acyclic partial matching. 
\section{Background}\label{sec:background}
In this section, for the sake of completeness, we first recall some basic notions in algebraic topology. We refer the reader to \cite{hatcher} for a more detailed exposition. Then we present the main concepts of algebraic discrete Morse theory and finally, we discuss the foundations of discrete Hodge theory.
\paragraph{Algebraic Discrete Morse Theory.}
For two chain complexes $(\C,\D)$ and $(\mathbf{D},\D')$, a pair of chain maps $\Psi:\C\rightarrow\mathbf{D}$ and $\Phi:\mathbf{D}\rightarrow \C$ are \textit{chain equivalances} if $\Phi\circ \Psi : \C \to \C$ and $\Psi\circ \Phi : \mathbf{D} \to \mathbf{D}$ are  chain homotopic to the identities on $\C$ and $\mathbf{D}$, respectively. Note that this implies that the maps induced on the homology modules by $\Phi$ and $\Psi$ are isomorphisms. The chain equivalences $\Psi$ and $\Phi$ form a \textit{deformation retract} of the chain complexes $\C$ and $\mathbf{D}$ if $\Psi\circ \Phi$ is the identity map on $\mathbf{D}$. 
Deformation retracts will be often depicted as the following diagram.
\begin{center}
    \begin{tikzcd}
        \mathbf{D} \ar[r, shift right, "\Phi"'] & \C \ar[l, shift right, "\Psi"'] \ar[l, loop right, "h"]
    \end{tikzcd}
    \end{center}
With a slight abuse of notation, we denote such deformation retract by the pair $(\Psi,\Phi)$ instead of  $(\Psi,\Phi,h)$.
Throughout the paper we will be working with the following notion of \emph{based} chain complexes, as defined in \cite{Sko18}, which in this context are chain complexes with a graded structure.
\begin{definition}
 Let $R$ be a commutative ring. A \textit{based chain complex} of $R$-modules is a pair $(\C,I)$, where $\C$ is a chain complex of $R$-modules and $I= \{I_n\}_{n\in\N}$ is a set of mutually disjoint sets such that for all $n$ and all $\alpha\in I_n$ there exist $C_\alpha\subseteq \C_n$ such that $\C_n = \bigoplus_{\alpha \in I_n} C_\alpha$.
\end{definition}

Similarly, a based cochain complex is a cochain complex with an indexing set and graded decomposition as above. The components of the boundary operator $\D_n$ are denoted $\D_{\beta, \alpha} : C_\alpha \to C_\beta$ for all $\alpha\in I_n$ and $\beta\in I_{n-1}$. We will refer to the elements of $I_n$ as the \emph{$n$-cells} of $(\C,I)$, and if $\D_{\beta, \alpha} \neq 0$, we say that $\beta$ is a \textit{face} of $\alpha$. If $\C$ is endowed with a degree-wise inner product, we say that $I$ is an orthogonal base if $C_\alpha \perp C_{\beta}$ for all $\alpha \neq \beta \in I$.

\begin{remark}
In this paper, working with combinatorial Hodge theory means that, if not specified otherwise, we restrict our study to degree-wise finite-dimensional chain complexes over $\R$ with an inner product on each of the chain module $\C_n$.\footnote{We leave the original definition here to emphasise that algebraic discrete Morse theory works in more generality.} Moreover, we will refer to degree-wise finite-dimensional based chain complexes as finite-type based chain complexes.
\end{remark}

\noindent The following examples motivate such a choice of terminology for based chain complexes.

\begin{example}\label{ex:dim1-complex}
    In the special case where $(\C,I)$ is a finite-type based chain complex over $\R$ and $C_\alpha \cong \R$ for all $\alpha \in I$, we can think of $I$ as a choice of basis, and each $\D_{\beta,\alpha} \in \mathsf{Hom}(\R,\R) = \R$ as the $(\beta,\alpha)$-entry in the boundary matrix multiplying on the left with respect to such a basis. 
\end{example}

\begin{example}[CW complexes]
The chain complex associated to a finite CW complex with a basis given by its cells is an example of a based chain complex (see \cite{hatcher} for a precise definition of CW complex). For two cells $\sigma, \tau$ in a CW complex $\mc{X}$, denote the degree of the attaching map of $\sigma$ to $\tau$ by $[\sigma:\tau]$ and write $\sigma \triangleright \tau$ whenever they are incident\footnote{Here, incident means that the closure $\overline{\sigma}$ of $\sigma$ contains $\tau$.}. For two incident cells, $\D_{\tau,\sigma}$ is multiplication by $[\sigma:\tau]$.
\end{example}

\begin{example}[Cellular Sheaves] Here we present the main definitions for cellular sheaves, following the more detailed exposition of sheaf Laplacians found in \cite{Hansen2019}. A \emph{cellular sheaf} of finite dimensional Hilbert spaces over a regular\footnote{Regular here indicates that the attaching maps are homeomorphisms.} CW complex $\mc{X}$ consists of an assignment of a vector space $\mc{F}(\sigma)$ to each cell $\sigma \in \mc{X}$ and a linear map $\mc{F}_{\tau \triangleleft \sigma} : \mc{F}(\tau) \to \mc{F}(\sigma)$ to each pair of incident cells $\sigma \triangleright \tau$.
This defines a cochain complex,with $$\C_n = \bigoplus_{\tau \in \mc{X}_n} \mc{F}(\tau),  $$
where $\mc{X}_n$ denotes the set of $n$-cells of $\mc X$, and coboundary maps $\delta_n : \C_n \rightarrow \C_{n+1}$ defined component-wise by $\delta_{\sigma, \tau} = [\sigma:\tau] \mc{F}_{\tau \triangleleft \sigma} : C_\tau \to C_\sigma.$

Using the inner product on $\C_n$ induced by the inner product on each Hilbert space $\mc{F}(\sigma)$, one can define a boundary map $\D_n: \C_{n+1} \rightarrow\C_{n} $ as the adjoint of the coboundary map $\delta_n$. This chain complex is an example of a based chain complex, where the $n$-cells of the base correspond the $n$-cells of the underlying indexing complex.
\end{example}

Discrete Morse theory was originally introduced by Forman in \cite{Forman1998} as a combinatorial version of classical Morse theory.  Here we present its fundamental ideas in a purely algebraic setting, following the exposition in \cite{Sko18}. 

\begin{definition}
 Let $(\C,I)$ be a finite-type based chain complex with base $I$. We denote by $\mc{G}(\C,I)$ the \textit{graph of the complex}, which is the directed graph consisting of vertices $I$ and edges $\alpha \to \beta$ whenever $\D_{\beta,\alpha}$ is non-zero. When clear from the context we will denote $\mc{G}(\C,I)$ by $\mc{G}(\C)$. For a subset of edges $E$ of $\mc{G}(\C)$, denote by $\mc{G}(\C)^E$ the graph $\mc{G}(\C)$ with the edges of $E$ reversed. 
\end{definition}
 
\noindent Using these notions we can define a Morse matching as follows. 

\begin{definition} \label{def:morse-matching}
    An \textit{(algebraic) Morse matching} $M$ on a based complex $(\C,I)$ is a selection of edges $\alpha \to \beta$ in $\mc{G}(\C)$ such that
\begin{enumerate}
    \item each vertex in $\mc{G}(\C)$ is adjacent to at most one edge in $M$;
    \item for each edge $\alpha \to \beta$ in $M$, the map $\D_{\beta, \alpha}$ is an isomorphism;
    \item the relation on each $I_n$ given by $\alpha \succ \beta$ whenever there exists a directed path from $\alpha$ to $\beta$ in $\mc{G}(\C)^M$ is a partial order.
\end{enumerate}
\end{definition}

For context, the third condition corresponds to acyclicity in the classical Morse matching definition, where directed paths akin to gradient flow-lines -- which are non-periodic -- in the smooth Morse theory setting \cite{Milnor}. 

When there is an edge $\alpha\to\beta$ in $M$, we say that $\alpha$ and $\beta$ are \emph{paired} in $M$, and refer to them as a \textit{$(\dim \alpha, \dim \alpha -1)$-pairing}. We use $M^0$ to denote the elements of $I$ that are not paired by $M$, and refer to them as \emph{critical cells} of the pairing. For a directed path $\gamma = \alpha, \sigma_1, \ldots, \sigma_k, \beta$ in the graph $\mc{G}(\C,I)^M$, the \textit{index} $\mathcal{I}(\gamma)$ of $\gamma$ is then defined as
$$ \mathcal{I}(\gamma) = \epsilon_n \D_{\beta, \sigma_n}^{\epsilon_n} \circ \ldots \circ \epsilon_1 \D_{\sigma_2, \sigma_1}^{\epsilon_1} \circ \epsilon_0 \D_{\sigma_1, \alpha}^{\epsilon_0} : C_\alpha \to C_\beta$$ where $\epsilon_i = -1$ if $\sigma_i \to \sigma_{i+1}$ is an element of $M$, and $1$ otherwise. For any $\alpha, \beta \in I$, we define the \textit{summed index} $\Gamma_{\alpha, \beta}$ to be 
$$\Gamma_{\beta,\alpha} = \sum_{\gamma : \alpha \to \beta} \mathcal{I}(\gamma) : C_\alpha \to C_\beta,$$
the sum over all possible paths from $\alpha$ to $\beta$. In the case that there are no paths from $\alpha \to \beta$ then $\Gamma_{\beta,\alpha} = 0$.

The theorem below is the main theorem of algebraic Morse theory. While this theorem was originally proved in \cite{Sko05}, here we state it in the form presented in \cite{Sko18} where it is proved as a corollary of the Homological Perturbation Lemma (\cite{Sko18}, Theorem 1, \cite{Brown65thetwisted,Gugenheim1972OnTC}). This proof provides an explicit description of the chain homotopy $h:\C\rightarrow\C $ that witnesses the fact that the algebraic Morse reduction is a homotopy equivalence.

\begin{theorem}[Sköldberg, \cite{Sko18}] \label{main_lemma}
    Let $(\C,I)$ be a based chain complex indexed by $I$, and $M$ a Morse matching. For every $n \geq 0$ let $$\C^M_n = \bigoplus_{\alpha \in I_n \cap M^0} C_\alpha.$$ The diagram
    \begin{center}
    \begin{tikzcd}
        \C^M \ar[r, shift right, "\Phi"'] & \C \ar[l, shift right, "\Psi"'] \ar[l, loop right, "h"]
    \end{tikzcd}
    \end{center}
    where for $\alpha \in M^0 \cap I_n$ and $x \in C_\alpha$
    \begin{equation*}
    \begin{aligned}
        \D_{\C^M}(x) & = \sum_{\beta \in M^0 \cap I_{n-1}} \Gamma_{\beta,\alpha}(x)     \hspace{3em} \Phi(x) & = \sum_{\beta \in I_n } \Gamma_{\beta,\alpha}(x) 
    \end{aligned}
    \end{equation*}
    and for $\alpha \in I_n$ and $x \in C_\alpha$
    \begin{equation*}
    \begin{aligned}
    \Psi(x) & = \sum_{\beta \in M^0 \cap I_{n-1}} \Gamma_{\beta, \alpha}(x)
    \hspace{3em}
    h(x)  & = \sum_{\beta \in I_{n+1}} \Gamma_{\beta,\alpha}(x)
    \end{aligned}
    \end{equation*}
    is a deformation retract\footnote{In fact the result is stronger. Specifically the maps form a \textit{strong} deformation retract.} of chain complexes.
\end{theorem}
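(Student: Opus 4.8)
The plan is to realize the Morse reduction as a single application of the Homological Perturbation Lemma, exactly as the excerpt indicates, and then to identify the abstract output of the lemma with the explicit summed-index formulas. First I would split the boundary operator as $\D = \D^{(0)} + \D^{(1)}$, where $\D^{(0)}$ collects precisely the components $\D_{\beta,\alpha}$ coming from matched edges $\alpha \to \beta \in M$, and $\D^{(1)}$ is the remainder. Because each cell is adjacent to at most one edge of $M$ (condition 1 of Definition \ref{def:morse-matching}), no two matched edges compose, so $(\D^{(0)})^2 = 0$ and $(\C,\D^{(0)})$ is a genuine chain complex; by condition 2 every matched $\D_{\beta,\alpha}$ is an isomorphism. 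Consequently $(\C,\D^{(0)})$ is a direct sum of acyclic two-term complexes $C_\alpha \to C_\beta$ (one per pair, with differential the isomorphism $\D_{\beta,\alpha}$) together with the critical summands $C_\gamma$, $\gamma \in M^0$, carrying zero differential.

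This decomposition supplies an explicit strong deformation retract $(\iota_0,\pi_0,h_0)$ of $(\C,\D^{(0)})$ onto $(\C^M,0)$: take $\pi_0$ to be projection onto the critical summands, $\iota_0$ the inclusion, and $h_0$ to be the suitably signed inverse $(\D_{\beta,\alpha})^{-1}$ on each lower summand $C_\beta$ of a matched pair and zero elsewhere. A direct check gives $\pi_0\iota_0 = \id_{\C^M}$ and $\id_\C - \iota_0\pi_0 = \D^{(0)}h_0 + h_0\D^{(0)}$, along with the side conditions $h_0^2 = 0$, $h_0\iota_0 = 0$, $\pi_0 h_0 = 0$; these are exactly the hypotheses of the perturbation lemma, and the side conditions are what upgrade the output to the \emph{strong} retract of the footnote. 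I would then treat $\D^{(1)}$ as the perturbation: since $(\D^{(0)} + \D^{(1)})^2 = \D^2 = 0$, it is an admissible perturbation of the differential.

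The heart of the argument, and the step I expect to be the main obstacle, is verifying that the perturbation series converges, i.e.\ that $h_0\D^{(1)}$ is locally nilpotent, so that
\[ (1 - h_0\D^{(1)})^{-1} = \sum_{k\ge 0}(h_0\D^{(1)})^k \]
is well defined. This is where condition 3 enters. The composite $h_0\D^{(1)}$ preserves degree: on a summand $C_\alpha$ with $\alpha \in I_n$ it first applies an unmatched component of $\D$ (a down-edge of $\mc{G}(\C)^M$) and then travels back up a reversed matched edge via $h_0$, so each application advances along a directed path in $\mc{G}(\C)^M$ with both endpoints in degree $n$. Since $M$ is a Morse matching, the relation $\succ$ is a partial order, hence $\mc{G}(\C)^M$ has no directed cycle among cells of a fixed degree; combined with degree-wise finite-dimensionality this bounds the length of such paths by $|I_n|$ and forces $(h_0\D^{(1)})^k = 0$ for $k$ large, uniformly on each chain group. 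I would isolate this as a termination lemma. With convergence established, the Homological Perturbation Lemma (\cite{Brown65thetwisted,Gugenheim1972OnTC}, in the form used in \cite{Sko18}) immediately yields a strong deformation retract of $(\C,\D)$ onto $(\C^M, \D_{\C^M})$.

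It then remains to match the lemma's output with the stated maps. Expanding each perturbed map as a geometric series in $h_0\D^{(1)}$ (or $\D^{(1)}h_0$) and reading off the factors, a term indexed by a path $\gamma = \alpha,\sigma_1,\dots,\sigma_k,\beta$ contributes a composite of boundary components $\D_{\sigma_{i+1},\sigma_i}$ on down-edges and of inverses $(\D_{\sigma_{i+1},\sigma_i})^{-1}$ on the reversed matched up-edges — precisely the sign-weighted product $\mathcal{I}(\gamma)$, with each $\epsilon_i = -1$ recording a use of $h_0$. Summing over all paths recovers $\Gamma_{\beta,\alpha}$, and sorting endpoints by degree and criticality then identifies the perturbed differential with $\D_{\C^M}$ (paths ending at critical cells one degree below), the new inclusion with $\Phi$ (paths from a critical cell to any cell of the same degree), the new projection with $\Psi$ (paths from any cell to the critical cells of its own degree), and the new homotopy with $h$ (paths ending one degree above), the $k=0$ terms furnishing the trivial paths $\Gamma_{\alpha,\alpha} = \id$. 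The residual work is the routine but sign-sensitive bookkeeping confirming that the perturbation-lemma signs agree with the convention $\epsilon_i\D^{\epsilon_i}$ in the definition of $\mathcal{I}(\gamma)$, which also fixes the sign chosen for $h_0$ above.
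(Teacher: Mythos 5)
Your proposal is correct and follows essentially the same route as the paper, which does not reprove this result but defers to Sköldberg's argument in \cite{Sko18}: a splitting $\D = \D^{(0)} + \D^{(1)}$ into matched and unmatched components, the trivial strong deformation retract onto the critical summands, local nilpotence of $h_0\D^{(1)}$ from the acyclicity (partial order) condition, and the Homological Perturbation Lemma with the geometric series identified path-by-path with the summed indices $\Gamma_{\beta,\alpha}$. The only remaining work is the sign bookkeeping you already flag, which is exactly where the convention $\epsilon_i \D^{\epsilon_i}$ in the definition of $\mathcal{I}(\gamma)$ is fixed.
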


 We refer to the finite-type based chain complex $(\C^M,\D_{C^M}, I \cap M^0)$ as the \textit{Morse chain complex}. Moreover, we call this deformation retract of $\C$ into $\C^M$ the \textit{Morse retraction} induced by $M$.

\begin{example}\label{ex:maps} Given a based chain complex $(\C,I)$ and a single $(n+1,n)$-pairing $M = (\alpha \to \beta)$, Lemma \ref{main_lemma} can be used to get a simple closed form of the updated complex $(\C^M, \D_{\C^M})$ as well as the chain equivalences. We write them explicitly here, and will refer to them throughout the paper.
\begin{itemize}
 \vspace{1em}
    \item For every $\tau, \sigma \in M^0$, the Morse boundary operator is
    $$\D^{\C^M}_{\tau,\sigma} =
    \D_{\tau,\sigma}  - \D_{\tau,\alpha} \D_{\beta,\alpha}^{-1} \D_{\beta,\sigma}.$$
    \item The map $\Psi$ is the identity except at components $C_\alpha$ and $C_\beta$, where it is
    $$ \restr{\Psi_{n}^M}{C_\beta} = \sum_{\tau \in I_{n} \setminus \alpha} - \D_{\tau,\alpha} \D_{\beta,\alpha}^{-1} \hspace{5em} \restr{\Psi_{n+1}^M}{C_\alpha} = 0.
    $$
    \item The map $\Phi$ is the identity except at components $C_\eta$ for each $\eta \in M^0 \cap I_{n+1}$, where it is
    $$\restr{\Phi^M_{n+1}}{\C_\eta}=Id_{\C_\eta} -\D_{\beta,\alpha}^{-1} \D_{\beta,\eta}.$$
\end{itemize}
\noindent Note that these equations are identical to those appearing in \cite{vidit,kaczynski1998homology} in the case that each component $C_\alpha$ is of dimension $1$.
\end{example} 

 When $(\C, I)$ is a finite-type based chain complex of real inner product spaces, the adjoints of the maps in Theorem \ref{main_lemma} play an important role in later sections. Their discrete Morse theoretic interpretation in terms of flow, however, hinges on the orthogonality of the base of $\C$ (see Appendix \ref{adjoint_retraction_appendix}). We will require the following basic result of linear algebra regarding adjoints throughout the paper.
 
 \begin{lemma} \label{adjoint_orthogonal_projection}
    Let $V$ be an finite dimensional inner product space and $W \subseteq V$ be a subspace. The adjoint of the inclusion map $i : W \to V$ is the orthogonal projection $\mathrm{Proj}_W = i^\dag$ onto $W$.
 \end{lemma}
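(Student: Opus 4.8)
The plan is to verify directly that the orthogonal projection satisfies the defining universal property of the adjoint, and then invoke uniqueness of the adjoint. Recall that $i^\dag : V \to W$ is characterised as the unique linear map for which
\begin{equation*}
    \langle i(w), v \rangle_V = \langle w, i^\dag(v) \rangle_W \qquad \text{for all } w \in W,\ v \in V.
\end{equation*}
Since the inclusion acts as $i(w) = w$ when $w$ is regarded as an element of $V$, and since the inner product on $W$ is the restriction of that on $V$, it suffices to exhibit a map $T : V \to W$ with $\langle w, v\rangle_V = \langle w, T(v)\rangle_V$ for all such $w, v$, and to show $T = \mathrm{Proj}_W$.

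First I would use the finite-dimensionality of $V$ to obtain the orthogonal decomposition $V = W \oplus W^\perp$, which lets me write any $v \in V$ uniquely as $v = \mathrm{Proj}_W(v) + v^\perp$ with $\mathrm{Proj}_W(v) \in W$ and $v^\perp \in W^\perp$. Then, for any $w \in W$, bilinearity and the orthogonality relation $\langle w, v^\perp\rangle_V = 0$ give
\begin{equation*}
    \langle i(w), v \rangle_V = \langle w, v\rangle_V = \langle w, \mathrm{Proj}_W(v)\rangle_V + \langle w, v^\perp\rangle_V = \langle w, \mathrm{Proj}_W(v)\rangle_W.
\end{equation*}
This shows that $\mathrm{Proj}_W$ satisfies exactly the adjoint identity. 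By the uniqueness of the adjoint in a finite-dimensional inner product space, I would conclude $i^\dag = \mathrm{Proj}_W$.

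There is no serious obstacle here, as this is a standard fact; the only points requiring care are the existence of the orthogonal complement decomposition (which is where finite-dimensionality, or more generally completeness, is used) and the uniqueness of the adjoint (which relies on the non-degeneracy of the inner product). Both are guaranteed by the hypotheses, so the verification of the defining identity is the entire content of the argument.
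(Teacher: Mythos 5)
Your proof is correct and complete; the paper itself states this lemma without proof, treating it as a standard linear-algebra fact, and your argument (verifying the adjoint identity against the decomposition $V = W \oplus W^\perp$ and invoking uniqueness of the adjoint) is exactly the standard verification one would supply.
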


\begin{example}
Let $(\C,I)$ be the canonical based chain complex associated to the cell complex in Figure \ref{fig:ex-pairing}, (left). Following the standard convention of discrete Morse theory, we visually depict a pairing $\alpha \to \beta$ by an arrow running from the cell $\beta$ to the cell $\alpha$. We consider the single $(2,1)$-pairing $M=(\alpha,\beta)$, depicted by the black arrow. Figure \ref{fig:ex-pairing} illustrates how the maps $\Psi^M$ and $\Phi^M$, made explicit Example \ref{ex:maps}, operate on $s\in \C_1$.

\begin{figure}
    \centering
    \includegraphics[width = 0.9\linewidth]{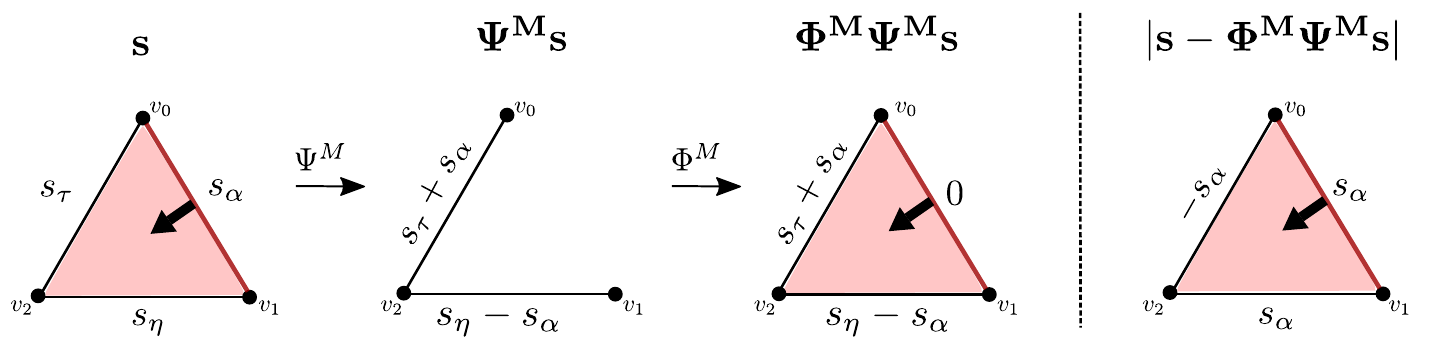}
    \caption{The chain maps $\Psi$ and $\Phi$ operating on a signal $s\in \C_1$.}
    \label{fig:ex-pairing}
\end{figure}

\end{example} 
\begin{remark} 
 Motivated by the emerging field of cellular signal processing, we refer to elements $s \in \C_n$ as \textit{signals} (\cite{barbarossa2020topological, schaub2021signal}). 
 
\end{remark}

In the next definition we introduce the concept of \textit{sequential Morse matching}, an iterative sequence of Morse matchings. This type of matching, unlike a Morse matching, has a low computational cost to reduce the chain complex to a minimal number of critical cells. We discuss this in detail in Section \ref{sec:algo}.

\begin{definition}
A \textit{sequential Morse matching} $\M$ on a based chain complex $(\C,I)$ is a finite sequence of Morse matchings, $M_{(1)},\dots , M_{(n)}$ and bases $I_1, \ldots, I_n$ such that the following conditions hold.
\begin{enumerate}
\item $M_{(1)}$ is a Morse matching on $(\C,I)$.
\item $M_{(j+1)}$ is a Morse matching in $(\C^{M_{(j)}},I_j)$ for every $j\in \{1,\dots,n-1\}$.
\item $\C^{M_{(j)}}$ is a based complex over $I_j \subseteq I_k$ for every $1 
\leq j \leq k \leq n$.
\end{enumerate}
\end{definition}

We denote by $(\C^{\M},\D_{\C_{\M}})$ the based chain complex obtained from $\C$ by iteratively composing the Morse matchings in the sequential Morse matching $\M$, implying that $(\C^{\M},\D_{\C_{\M}}) = (\C^{M_{(n)}}, \D_{\C_{M_{(n)}}})$. Note that in this case, the critical cells of each individual matching in $\M$ form a nested sequence $M_{(1)}^0 \supseteq \cdots \supseteq M_{(n)}^0$. We denote by $\M^0$ the set of \emph{critical cells} of the sequential Morse matching $\M$ and define it to be the set of critical cells in the last Morse matching in the sequence, namely $\M^0 = M_{(n)}^0$.

\paragraph{Combinatorial Laplacians.} For a finite-type based chain complex $\C$ over $\R$ with boundary operator $\D$ and inner products $\langle \cdot,\cdot \rangle_n$ on each $\C_n$, define $\D^\dagger_n : \C_n \to \C_{n+1}$ as the adjoint of $\D_n$, i.e., the map that satisfies $\langle \sigma, \D_n^\dagger \tau \rangle_n = \langle \D_n \sigma,  \tau \rangle_{n-1}$ for all $\sigma \in \C_n$ and $\tau \in \C_{n-1}$. The adjoint maps form a cochain complex
$$  \ldots \xleftarrow{\D_{n+1}^\dagger} \C_n \xleftarrow{\D_n^\dagger} \C_{n-1} \xleftarrow{\D_n^\dagger} \ldots$$
where $(\D^\dagger)^2=0$ follows from the adjoint relation.

\begin{remark}\label{rmk:weights}If $\D_n$ is represented as a matrix in a given basis, and the inner products with respect to that basis are represented as 
$\langle \sigma, \tau \rangle_n = \sigma^T W_n \tau$ where each $W_n$ is a positive-definite symmetric matrix, then the matrix form of the adjoint is given by $\D_n^\dagger = (W_{n}^{-1}) \D_n^T W_{n-1}.$ Note that in our definition the inner product matrix $W_n$ does not necessarily preserve the orthogonality of the standard cellular or simplicial basis in case we are working with cell complexes. In practice, other authors require $W_n$ to be a diagonal matrix to keep the standard basis orthogonal \cite{Horak2013}. In this way the coefficients of $W_n$ can be thought as weights on the $n$-cells, see Appendix \ref{appendix:weights}
\end{remark}

\begin{definition} The \textit{combinatorial Laplacian} is then defined as the sequence of operators $$(\Delta_n = \D_n^\dagger \D_n + \D_{n+1} \D_{n+1}^\dagger: \C_n\longrightarrow\C_n)_{n\geq 0}.$$ For each $n$, the two summands can be further delineated into
\begin{enumerate}
    \item the \textit{$n$-th up-Laplacian} $\Delta_n^+ = \D_{n+1} \D_{n+1}^\dagger :  \C_n \to \C_n$ and
    \item the \textit{$n$-th down-Laplacian} $\Delta_n^- = \D_n^\dagger \D_n : \C_n \to \C_n.$
\end{enumerate}
\end{definition}

\noindent The fundamental results concerning the combinatorial Laplacian were proved by Eckmann in the 1940s \cite{Eckmann1944}.
 
 \begin{theorem}(Eckmann, \cite{Eckmann1944}) If $\C$ is a finite-type based chain complex over $\R$ equipped with an inner product in each degree, then for all $n\geq 0$
    \begin{enumerate}
        \item $H_n(\C) \cong \Ker \Delta_n$, and
        \item $\C_n$ admits an orthogonal decomposition
        \begin{equation}\label{eq:hodge-dec}
        \C_n \cong \Ima \D_{n+1} \oplus \Ker \Delta_n \oplus \Ima \D_n^\dag.
        \end{equation}
 
    \end{enumerate}
 \end{theorem}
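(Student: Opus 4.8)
The plan is to reduce the entire statement to two standard facts of finite-dimensional linear algebra: first, that a positive semidefinite self-adjoint operator has the same kernel as the quadratic form it induces, and second, the fundamental relation $\Ima T^\dagger = (\Ker T)^\perp$ valid for any linear map $T$ between finite-dimensional inner product spaces. Everything then follows by assembling the orthogonal splittings coming from $\D_n$ and $\D_{n+1}$ and matching the resulting pieces against the harmonic subspace $\Ker \Delta_n$.

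First I would establish the harmonic characterisation of the Laplacian kernel. For any $x \in \C_n$, expanding $\Delta_n = \D_n^\dagger \D_n + \D_{n+1}\D_{n+1}^\dagger$ and using the adjoint relation gives
$$\langle \Delta_n x, x\rangle = \langle \D_n x, \D_n x\rangle + \langle \D_{n+1}^\dagger x, \D_{n+1}^\dagger x\rangle = \|\D_n x\|^2 + \|\D_{n+1}^\dagger x\|^2.$$
Since $\Delta_n$ is self-adjoint and the right-hand side is a sum of squares, $\Delta_n x = 0$ holds if and only if both summands vanish, that is $\Ker \Delta_n = \Ker \D_n \cap \Ker \D_{n+1}^\dagger$. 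The reverse containment is immediate from the definition of $\Delta_n$, so this is genuinely an equality of subspaces.

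Next I would use $\Ima T^\dagger = (\Ker T)^\perp$ to obtain two orthogonal decompositions of $\C_n$, namely $\C_n = \Ker \D_n \oplus \Ima \D_n^\dagger$ (applying it to $\D_n$) and $\C_n = \Ima \D_{n+1} \oplus \Ker \D_{n+1}^\dagger$ (applying it to $\D_{n+1}$, viewed through its codomain $\C_n$). The chain complex relation $\D_n \D_{n+1} = 0$ and its adjoint $\D_{n+1}^\dagger \D_n^\dagger = 0$ yield the inclusions $\Ima \D_{n+1} \subseteq \Ker \D_n$ and $\Ima \D_n^\dagger \subseteq \Ker \D_{n+1}^\dagger$. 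I would then take $x \in \C_n$, split off $x = k + \D_n^\dagger b$ with $k \in \Ker \D_n$ by the first decomposition, and within $\Ker \D_n$ project $k$ orthogonally onto $\Ima \D_{n+1}$, writing $k = \D_{n+1}a + k'$ with $k' \perp \Ima \D_{n+1}$. By the second decomposition $(\Ima \D_{n+1})^\perp = \Ker \D_{n+1}^\dagger$, whence $k' \in \Ker \D_n \cap \Ker \D_{n+1}^\dagger = \Ker \Delta_n$ by the first step. A short check using the adjoint relation confirms that the three summands $\Ima \D_{n+1}$, $\Ker \Delta_n$, and $\Ima \D_n^\dagger$ are pairwise orthogonal, which produces the decomposition \eqref{eq:hodge-dec}.

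Finally, part (1) falls out of the same bookkeeping: the decomposition restricts to the orthogonal splitting $\Ker \D_n = \Ima \D_{n+1} \oplus \Ker \Delta_n$, so every class in $H_n(\C) = \Ker \D_n / \Ima \D_{n+1}$ has a unique harmonic representative, giving $H_n(\C) \cong \Ker \Delta_n$. The main obstacle is not any single step but the orthogonality bookkeeping in the penultimate paragraph: one must verify that the residual piece $k'$ lands in \emph{both} kernels simultaneously, which is precisely where the two splittings have to be reconciled, and that the three claimed summands are mutually orthogonal rather than merely independent. Once $\Ima T^\dagger = (\Ker T)^\perp$ is in hand these become routine, so the real content lies in organising the argument so that the harmonic space emerges as the common refinement of the two orthogonal decompositions.
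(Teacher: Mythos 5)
Your proof is correct. Note, however, that the paper does not supply its own proof of this statement: it is quoted as a classical background result attributed to Eckmann, so there is nothing internal to compare against. Your argument is the standard one — identifying $\Ker \Delta_n = \Ker \D_n \cap \Ker \D_{n+1}^\dagger$ via the positive-semidefinite quadratic form, combining the two orthogonal splittings $\C_n = \Ker \D_n \oplus \Ima \D_n^\dagger = \Ima \D_{n+1} \oplus \Ker \D_{n+1}^\dagger$ using $\D_n \D_{n+1} = 0$, and reading off $H_n(\C) \cong \Ker \Delta_n$ from $\Ker \D_n = \Ima \D_{n+1} \oplus \Ker \Delta_n$ — and all the orthogonality checks you flag do go through as you describe.
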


The decomposition in the second point, called the \textit{combinatorial Hodge decomposition}, is the finite-dimensional analogue of the Hodge decomposition for smooth differential forms. Two additional orthogonal decompositions associated with adjoints that we will use frequently are
\begin{equation}\label{eq:decomp}
    \C_n = \Ker \D_{n+1}^\dagger \oplus \Ima \D_{n+1} = \Ker \D_n \oplus \Ima \D_n^\dag.
\end{equation}

\paragraph{Singular value decomposition.} Let $V,W$ be real finite-dimensional inner-product spaces. Let $f:V\rightarrow W$ be a linear map and $f^\dagger:W \rightarrow V$ its adjoint. 
The Spectral Theorem states that $f^\dagger f$ and $f f^\dag$ have the same set of real eigenvalues $\Lambda$.  Moreover, the singular value decomposition guarantees that there exist orthonormal bases $\mc{R}(f)$ and $\mc{L}(f)$ of $V$ and $W$ formed by eigenvectors of $f^\dagger f$ and $ff^\dag$ such that for each non-zero $\lambda\in \Lambda$ there exists a unique $v\in\mc{R}(f)$ and a unique $w\in \mc{L}(f)$ such that $$f(v) = \sqrt{\lambda}w.$$ 

We denote by $\mc{L}_+(f)$ and $\mc{R}_+(f)$ the subsets of $\mc{L}(f)$ and $\mc{R}(f)$ respectively corresponding 
to non-zero eigenvalues.
Consider now $f=\D_n:\C_n\to \C_{n-1}, n\geq 0$, the boundary operators associated to a based chain complex. Note that $\mc{L}_+(\D_{n+1})$ and $\mc{R}_+(\D_{n})$, the sets of eigenvectors with positive eigenvalues of $\Delta_n^+=\D_{n+1}\D_{n+1}^{\dagger}$ and $\Delta_n^- = \D_n^\dag\D_n$, form orthonormal bases for $\Ima \D_{n+1}$ and $\Ima \D_n^\dag$, respectively (by Equation (\ref{eq:decomp})). 
In the next section we will see how these eigenvectors together with the Hodge decomposition will allow us to define a canonical Morse matching.

\section{Morsification of Deformation Retracts}\label{sec:morsification}
The aim of this section is to prove that every deformation retract of a finite-type based chain complex $\C$ over $\R$ equipped with degree-wise inner products is equivalent to a Morse retraction, with a canonical choice of basis. We first introduce the notion of the \textit{Hodge matching} on $\C$, a Morse matching defined over the eigenbasis of the combinatorial up and down Laplacians  $\Delta_n^+$ and $\Delta_n^-$. We can see the matching obtained by Hodge decomposition and the eigenvectors of $\Delta_n^+$ and $\Delta_n^-$ as a \emph{canonical} Morse matching. 
\subsection{Hodge Matchings}
The following concept marries the discrete Morse theoretic notion of pairing to the pairing inherent to the eigendecomposition of $\Delta_n^+$ and $\Delta_n^-$, which is intrinsically connected to the Hodge decomposition of a finite real chain complex.

\begin{definition}[Hodge basis]
  Let $\C$ be a finite-type based chain complex over $\R$. A \textit{Hodge basis} of $\C$ is the basis given by $I^{\Delta} = \{I_n^{\Delta}\}_{n\in\N}$, where $$I_n^{\Delta}= \mc{L}_+(\D_{n+1}) \bigcup \mc{R}_+(\D_n) \bigcup \mc{B}(\Ker\Delta_n),$$ for some choice of bases $\mc{L}_+(\D_{n+1}), \mc{R}_+(\D_n)$ and $\mc{B}(\Ker \Delta_n)$. 
\end{definition}
Observe that in the definition above each set in $I_n^{\Delta}$ forms a basis for one of the components in the Hodge decomposition (see Equation \ref{eq:hodge-dec}). Our discussion on the singular value decomposition ensures that Hodge bases always exist.

 \begin{definition}[Hodge matching] 
    Let $\C$ be a finite-type based chain complex of real inner product spaces, and let $I^\Delta$ be a Hodge basis. The \textit{Hodge matching} on $(\C,I^{\Delta})$ is
    $$M^\Delta := \bigcup_i \{ v \in \mc{R}_+(\D_i) \to w \in \mc{L}_+(\D_i) \mid \D_i v = \sigma w, \sigma \neq 0 \}.$$
\end{definition}

\begin{lemma}
    For a finite-type based chain complex $(\C,I^{\Delta})$ of real inner product spaces and $I^\Delta$ be a Hodge basis. The Hodge matching $M^\Delta$ on $(\C,I^{\Delta})$ is a Morse matching and satisfies
    \begin{enumerate}
        \item $(M^\Delta)^0_n = \Ker \Delta_n$, where $\Delta : \C \to \C$ is the combinatorial Laplacian of $\C$ and
        \item $\D^{M^\Delta} = 0.$
    \end{enumerate}
\end{lemma}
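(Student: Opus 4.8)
The plan is to first extract the structural consequence of working in the Hodge basis: the graph $\mc{G}(\C, I^\Delta)$ contains \emph{only} the matched edges. Indeed, by the singular value decomposition, for $v \in \mc{R}_+(\D_i)$ one has $\D_i v = \sigma w$ with $\sigma \neq 0$ and $w \in \mc{L}_+(\D_i)$ its unique partner, so the only nonzero component of $\D_i v$ lies in $C_w$. For the remaining basis vectors at degree $i$ --- those in $\mc{L}_+(\D_{i+1}) \subseteq \Ima \D_{i+1} \subseteq \Ker \D_i$, and those in $\mc{B}(\Ker \Delta_i)$, which are harmonic and hence satisfy $\D_i v = 0$ --- no edge is emitted. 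Consequently every edge $\alpha \to \beta$ of $\mc{G}(\C)$ is of the form $v \to w$ with $v \in \mc{R}_+(\D_i)$ and $w \in \mc{L}_+(\D_i)$, that is, precisely an edge of $M^\Delta$.

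With this in hand, I would verify the three axioms of Definition \ref{def:morse-matching}. Each vertex meets at most one edge because the SVD pairing $v \leftrightarrow w$ is a bijection, and within $I_n^\Delta$ the potential sources $\mc{R}_+(\D_n)$ and potential targets $\mc{L}_+(\D_{n+1})$ are disjoint, being orthonormal bases of the orthogonal summands $\Ima \D_n^\dagger$ and $\Ima \D_{n+1}$. The component $\D_{w,v}$ is multiplication by $\sigma \neq 0$ on the one-dimensional $C_v$, hence an isomorphism. For acyclicity, since every edge of $\mc{G}(\C)$ is matched, every edge of $\mc{G}(\C)^{M^\Delta}$ is reversed and therefore strictly raises degree; no directed path can return to its starting degree, so the relation $\succ$ on each $I_n$ is empty off the diagonal and is trivially a partial order.

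For claim (1), the cells paired by $M^\Delta$ at degree $n$ are exactly $\mc{R}_+(\D_n) \cup \mc{L}_+(\D_{n+1})$, whose span is $\Ima \D_n^\dagger \oplus \Ima \D_{n+1}$; the unpaired cells are therefore $\mc{B}(\Ker \Delta_n)$, giving $\C^{M^\Delta}_n = \bigoplus_{\alpha \in (M^\Delta)^0_n} C_\alpha = \Ker \Delta_n$ by the Hodge decomposition. For claim (2), I would argue that each critical cell $\alpha \in \mc{B}(\Ker \Delta_n)$ is harmonic, so $\D_n \alpha = 0$ and $\alpha$ is not the target of any match; hence $\alpha$ has no outgoing edge in $\mc{G}(\C)^{M^\Delta}$, every summed index $\Gamma_{\beta,\alpha}$ between critical cells vanishes, and the Morse differential $\D^{M^\Delta}$ of Theorem \ref{main_lemma} is zero.

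The substance of the argument lies entirely in the opening structural observation; once the graph is known to consist only of matched edges, all three conclusions follow almost formally. The point requiring the most care is the bookkeeping of degrees and the disjointness claims inside $I_n^\Delta$ --- namely that each basis vector plays exactly one of the roles source, target, or critical --- which rests on the orthogonality of the three Hodge summands and on keeping straight that a source sits one degree above its target.
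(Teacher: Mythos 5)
Your proof is correct and follows essentially the same route as the paper's: the key observation that in the Hodge basis the graph $\mc{G}(\C, I^\Delta)$ consists only of the SVD-matched edges (the paper phrases this as each cell being adjacent to at most one other cell), from which acyclicity, the identification of the critical cells with $\mc{B}(\Ker\Delta_n)$, and the vanishing of the Morse differential all follow. Your write-up is somewhat more explicit than the paper's, particularly in verifying the degree bookkeeping and the disjointness of sources and targets within $I_n^\Delta$, but the substance is identical.
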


\begin{proof}
     The description of orthonormal bases $\mc{L}(\D_n)$ and $\mc{R}(\D_n)$ described at the end Section~\ref{sec:background} implies that each cell is adjacent to at most one other cell in $\mc{G}(\C)^{M^\Delta}$. This means there are no nontrivial paths from any $n$-cell to any other $n$-cell for all $n$ in $\mc{G}(\C)^{M^\Delta}$. Thus, condition (3) in Definition \ref{def:morse-matching} is trivially satisfied, and $M^\Delta$ indeed constitutes a Morse matching. By definition,
    $$ \Ima \D_{n+1} = \Span \mc{L}_+(\D_{n+1}) \, \, \text{and} \, \, \Ima \D_n^\dagger = \Span \mc{R}_+(\D_n),$$
    and all basis elements are paired. The remaining basis elements of $\C_n$ are critical, and constitute $(M^\Delta)^0_n = \Ker \Delta_n$ for all $n$. Since there are no non-trivial paths, $\D^{M^\Delta}$ agrees with the boundary operator $\D$ of $\C$ on $\Ker \Delta$, which is indeed the zero map.

\end{proof}

\noindent We call the data
\begin{center}
    \begin{tikzcd}
        \Ker \Delta \ar[r, shift right, "\Phi^{M^\Delta}"'] & \C \ar[l, shift right, "\Psi^{M^\Delta}"'] \ar[l, loop right, "h"]
    \end{tikzcd}
\end{center}
the \emph{Hodge retraction} of $(\C, I^\Delta)$. Noting that the maps $\Phi^{M^\Delta}$, $\Psi^{M^\Delta}$ are chain equivalences reproves Eckmann's result that $\Ker \Delta$ is isomorphic to the homology $\HH(\C)$ of the original complex.

The same proof also encompasses the case of cellular sheaves discussed in \cite{Hansen2019}. Note that here, a Hodge matching will be over a Hodge base $I^\Delta$ rather than the one specified by the cellular structure of the indexing complex. Nevertheless, since $\Ker \Delta$ does not depend on the choice of base, the result is the same.
\begin{example}
    In Figure \ref{fig:two_optimal_matchings} we depict two different choice of bases -- the standard cellular basis and the Hodge basis -- for the celllular chain complex of the pictured simplicial complex. Two matchings $M$ and $M^\Delta$ are visualized through their corresponding Morse graphs $\mc{G}(\C)^M$ and $\mc{G}(\C)^{M^\Delta}$. The structure of the singular value decomposition of $\D$ and ensuing Hodge matching `straightens out' the connections in the matching graph, as pictured in Figure \ref{fig:two_optimal_matchings}.
\begin{figure}
    \centering
    \includegraphics[width=0.9\linewidth]{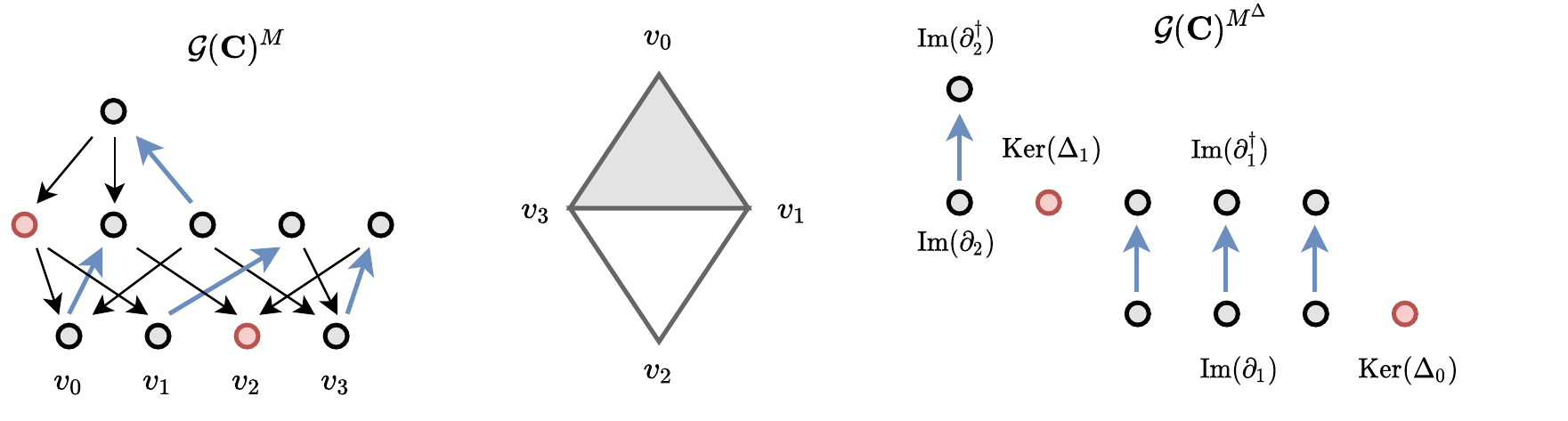}
    \caption{Two choices of bases and Morse matchings for the $\R$-valued chain complex of a simplicial complex. Edges in the Morse matchings are highlighted in blue and critical cells in red. }
    \label{fig:two_optimal_matchings}
\end{figure}

\end{example}

\subsection{Morsification Theorem}

In this section, we say that two deformation retracts 
\begin{center}
    \begin{tikzcd}[column sep = 3em, row sep = 3em]
        \mathbf{D} \ar[r, shift right, "\Phi"'] & \C \ar[l, shift right, "\Psi"'] \ar[l, loop right, "h"]
    \end{tikzcd}
 \hspace{1.5em}and 
  \hspace{1.5em}
    \begin{tikzcd}[column sep = 3em, row sep = 3em]
        \mathbf{D}' \ar[r, shift right, "\Phi'"'] & \C' \ar[l, shift right, "\Psi' "'] \ar[l, loop right, "h' "]
    \end{tikzcd}
\end{center}
are \emph{equivalent} if there exist isomorphisms of chain complexes, $f:\mathbf{D}\rightarrow\mathbf{D}'$ and $g:\C\rightarrow\C'$ such that the diagrams
    \begin{center}
    \begin{tikzcd}[column sep = 3em, row sep = 3em]
        \mathbf{D} \ar[d, "f"', "\cong"]  & \C \ar[d, "g", "\cong"'] \ar[l, "\Psi"'] \\
        \mathbf{D}'  & \C' \ar[l, "\Psi'"] 
    \end{tikzcd}
    \hspace{3em}
    \begin{tikzcd}[column sep = 3em, row sep = 3em]
        \mathbf{D} \ar[d, "f"', "\cong"] \ar[r, "\Phi"] & \C \ar[d, "g", "\cong"']  \\
        \mathbf{D}' \ar[r, "\Phi'"'] & \C' 
    \end{tikzcd}
    \end{center}
commute. Our goal is to show that any deformation retraction of finite-type chain complexes of real inner product spaces is isomorphic to a Morse retraction (Theorem \ref{morsification}). 

In the special case that $\C = \C'$ and $g$ is the identity, the commutativity of the diagrams above implies that
\begin{equation} \label{reconstruction_equality}
    \Phi'\Psi' = \Phi f^{-1} f \Psi = \Phi\Psi.
\end{equation}
Thus, to study the topological reconstruction error of a deformation retract, it is enough to study that of an equivalent deformation retract of the original complex. Two equivalent deformation retracts over a shared domain $\C$ may have different homotopies, however, they are related by
$$ \D h + h \D= 1 - \Phi \Psi  = 1 - \Phi' \Psi' = \D h' + h' \D.$$

The main theorem of this section relies on the observation that deformation retracts share a number of characteristics with projection maps in linear algebra i.e. a linear endomorphism $P : V \to V$ of a vector space $V$ satisfying $P^2 = P$. For any projection map, there exists a decomposition $V = \Ima P \oplus \Ker P$ such that $P$ can be decomposed as
 $$P = 1_{\Ima P} + 0 : \Ima P \oplus \Ker P \to \Ima P \oplus \Ker P.$$ The following lemma describes an analogous structure for real chain complexes, where a deformation retraction plays the role of a projection.

 \begin{lemma}\label{projection_lemma_chain_complexes} For any deformation retract 
    \begin{center}
    \begin{tikzcd}
        \mathbf{D} \ar[r, shift right, "\Phi"'] & \C \ar[l, shift right, "\Psi"'] \ar[l, loop right, "h"]
    \end{tikzcd}
    \end{center}
    of chain complexes over $\R$, 
    \begin{equation} \label{deformation_splitting}
        \C = \Ker\Psi \oplus \Ima\Phi.
    \end{equation}
    as chain complexes. 
\end{lemma}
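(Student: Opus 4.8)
The plan is to exploit the idempotency of the composite $P := \Phi\Psi : \C \to \C$, mirroring exactly the projection decomposition $V = \Ima P \oplus \Ker P$ recalled just before the lemma. First I would observe that $P$ is an idempotent chain endomorphism: since $\Psi\Phi = \id_{\mathbf{D}}$ is part of the deformation retract data, we get $P^2 = \Phi(\Psi\Phi)\Psi = \Phi\Psi = P$, and $P$ is a chain map because $\Phi$ and $\Psi$ are. Note that the homotopy $h$ plays no role here; only the retract identity $\Psi\Phi = \id_{\mathbf{D}}$ and the fact that both maps are chain maps are needed.

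Next I would establish the splitting degree-wise. For each $n$ and each $x \in \C_n$, the identity $x = (x - Px) + Px$ writes $x$ as a sum of an element of $\Ker P_n$ (since $P(x - Px) = Px - P^2 x = 0$) and an element of $\Ima P_n$. Trivial intersection follows because any $x \in \Ima P_n \cap \Ker P_n$ satisfies $x = Py$ for some $y$, whence $x = P^2 y = Px = 0$. This yields $\C_n = \Ker P_n \oplus \Ima P_n$ as vector spaces.

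The third step is to identify these two summands with $\Ker\Psi$ and $\Ima\Phi$, and this is where the retract identity is used most sharply. For the image, $\Ima P \subseteq \Ima\Phi$ is immediate, while for any $d \in \mathbf{D}$ one has $P(\Phi d) = \Phi(\Psi\Phi)d = \Phi d$, so every element of $\Ima\Phi$ is fixed by $P$ and hence lies in $\Ima P$; thus $\Ima P = \Ima\Phi$. For the kernel, $\Ker\Psi \subseteq \Ker P$ is clear, and if $Px = 0$ then $0 = \Psi P x = (\Psi\Phi)\Psi x = \Psi x$ using $\Psi\Phi = \id_{\mathbf{D}}$, so $\Ker P = \Ker\Psi$.

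Finally, I would upgrade the degree-wise vector-space splitting to a decomposition of chain complexes by noting that, because $P$ is a chain map, both $\Ker P = \Ker\Psi$ and $\Ima P = \Ima\Phi$ are subcomplexes: if $Px = 0$ then $P\D x = \D Px = 0$, and if $x = Py$ then $\D x = \D Py = P\D y \in \Ima P$. I do not anticipate a genuine obstacle, since the argument is the standard projection splitting transported to chain complexes; the only points demanding care are the two identifications in the third step, each of which hinges on $\Psi\Phi = \id_{\mathbf{D}}$, together with the verification that the summands are closed under the differential.
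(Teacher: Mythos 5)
Your proof is correct and follows essentially the same route as the paper's: both exploit the idempotency of the chain endomorphism $\Phi\Psi$ to obtain the degree-wise splitting, identify its image and kernel with $\Ima\Phi$ and $\Ker\Psi$ via $\Psi\Phi = \id_{\mathbf{D}}$, and use the chain-map property to upgrade the decomposition to one of chain complexes. Your third step just spells out the identifications more explicitly than the paper's appeal to the injectivity of $\Phi$ and surjectivity of $\Psi$.
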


\begin{proof}
    The deformation retraction condition $\Psi\Phi = \id_\textbf{D}$ implies that $$(\Phi_n \Psi_n)^2 =\Phi_n \Psi_n \Phi_n \Psi_n = \Phi_n \Psi_n,$$ 
   i.e., each component $\Phi_n \Psi_n$ of $\Phi \Psi$ is a projection operator. Thus there is a splitting of vector spaces
    $$\C_n = \Ker(\Phi\Psi)_n \oplus \Ima(\Phi\Psi)_n$$
    for each $n$. Since $\Phi\Psi$ is a chain map, the decomposition above commutes with the boundary operator of $\C$, whence $$\C = \Ker\Phi\Psi \oplus \Ima\Phi\Psi$$ as chain complexes. Lastly, $\Psi$ is surjective and $\Phi$ is injective since $\Psi \Phi = \id_\textbf{D}$, implying that $\Ima\Phi \Psi = \Ima\Phi$ and $\Ker\Phi \Psi = \Ker\Psi$.
\end{proof}

The decomposition defined in Equation \ref{deformation_splitting} has an interesting interpretation when passing to homology: all of the non-trivial homology of $\C$ arises from the $\Ima\Phi$ component of the decomposition. 
One way to think of this decomposition is that $\Ker\Psi$ is the component of $\C$ that is discarded by the deformation retraction, whereas $\Ima \Phi$ is preserved.

\begin{lemma}
    Under the hypotheses of Lemma \ref{projection_lemma_chain_complexes}
    \begin{enumerate} 
        \item $\HH(\C) \cong \HH(\Ima\Phi)$, and
        \item $\HH(\Ker\Psi) = 0$.
    \end{enumerate}
\end{lemma}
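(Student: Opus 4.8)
The plan is to deduce both statements from the chain-complex splitting $\C = \Ker\Psi \oplus \Ima\Phi$ established in Lemma \ref{projection_lemma_chain_complexes}. Since homology commutes with direct sums of chain complexes, this splitting immediately gives a decomposition
\[
\HH(\C) \cong \HH(\Ker\Psi) \oplus \HH(\Ima\Phi),
\]
under which the inclusion $\iota : \Ima\Phi \hookrightarrow \C$ of the summand induces the canonical inclusion $\HH(\iota)$ of $\HH(\Ima\Phi)$ as the second factor. The whole argument then reduces to showing that this $\HH(\iota)$ is an isomorphism: statement (1) is then immediate, and statement (2) follows because the inclusion of a direct summand is surjective precisely when the complementary summand vanishes.

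To prove that $\HH(\iota)$ is an isomorphism, I would first note that $\Phi$ is injective, since the retraction identity $\Psi\Phi = \id_\mathbf{D}$ forces $\Phi$ to have trivial kernel. Being an injective chain map, $\Phi$ corestricts to an isomorphism of chain complexes $\bar\Phi : \mathbf{D} \xrightarrow{\cong} \Ima\Phi$ (the linear inverse of a bijective chain map is again a chain map), and by construction $\Phi = \iota \circ \bar\Phi$. Passing to homology yields $\HH(\Phi) = \HH(\iota) \circ \HH(\bar\Phi)$ with $\HH(\bar\Phi)$ an isomorphism.

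It therefore remains to observe that $\HH(\Phi)$ is itself an isomorphism, which is exactly the content of $\Phi$ being a chain equivalence. This holds because $(\Psi,\Phi,h)$ is a deformation retract: the homotopy $h$ witnesses $\Phi\Psi \simeq \id_\C$, so $\HH(\Phi)\,\HH(\Psi) = \id$, while $\Psi\Phi = \id_\mathbf{D}$ gives $\HH(\Psi)\,\HH(\Phi) = \id$, making $\HH(\Phi)$ and $\HH(\Psi)$ mutually inverse. Combining, $\HH(\iota) = \HH(\Phi)\circ \HH(\bar\Phi)^{-1}$ is an isomorphism, which proves (1); and then (2) follows from the summand remark above.

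The main subtlety here is more bookkeeping than genuine obstacle: one must check that the two appearances of $\HH(\Ima\Phi)$ — as an abstract summand of $\HH(\C)$ and as the codomain of $\HH(\bar\Phi)$ — are identified through the \emph{same} map $\HH(\iota)$, so that its being an isomorphism onto $\HH(\C)$ genuinely forces the complementary summand $\HH(\Ker\Psi)$ to be zero. I would deliberately avoid the tempting alternative of restricting $h$ to a chain contraction of $\Ker\Psi$: that route requires the side condition $\Psi h = 0$ of a \emph{strong} deformation retract, which a general deformation retract in the sense used here need not satisfy, whereas the factoring argument above uses only the stated hypotheses.
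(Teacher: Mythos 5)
Your proof is correct and follows essentially the same route as the paper's: injectivity of $\Phi$ from $\Psi\Phi = \id_{\mathbf{D}}$ gives the chain isomorphism $\mathbf{D} \cong \Ima\Phi$, the chain homotopy makes $\HH(\Phi)$ an isomorphism, and the splitting $\C = \Ker\Psi \oplus \Ima\Phi$ then forces $\HH(\Ker\Psi) = 0$. The only difference is that you explicitly verify that the isomorphism $\HH(\C) \cong \HH(\Ima\Phi)$ is realized by the summand inclusion, a compatibility the paper leaves implicit when deducing point (2).
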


\begin{proof}
    Since $\Psi$ is a weak equivalence, $\HH(\C) \cong \HH(\mathbf{D})$. Since $\Psi\Phi=\id_{\mathbf{D}}$, $\Phi$ is injective, so $\mathbf{D} \xrightarrow{\Phi} \Ima\Phi$ is an isomorphism of chain complexes, proving point $(1)$. Since $\C = \Ker\Psi \oplus \Ima\Phi$ by Equation \ref{deformation_splitting}, it follows that $\HH(\Ker\Psi)=0$.
\end{proof}

\begin{theorem}[Morsification] \label{morsification} 
    Any deformation retract
    \begin{center}
    \begin{tikzcd}
        \mathbf{D} \ar[r, shift right, "\Phi"'] & \C \ar[l, shift right, "\Psi"'] \ar[l, loop right, "h"]
    \end{tikzcd}
    \end{center}
    of finite-type chain complexes of real inner product spaces is equivalent to a Morse retraction $(\Psi^\mc{M},\Phi^\mc{M})$ over $\C$ .
\end{theorem}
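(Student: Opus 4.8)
The plan is to exploit the chain-complex splitting $\C = \Ker\Psi \oplus \Ima\Phi$ furnished by Lemma \ref{projection_lemma_chain_complexes}, together with the fact that the summand $\Ker\Psi$ is acyclic, to construct a Morse matching that collapses $\Ker\Psi$ entirely while retaining $\Ima\Phi$ as the critical cells. The resulting Morse retraction will then be identified with $(\Psi,\Phi)$ via the isomorphism $\Phi : \mathbf{D} \xrightarrow{\cong} \Ima\Phi$.

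Concretely, I would first equip the subcomplex $\Ker\Psi$ with the inner product restricted from $\C$, so that it becomes a finite-type based chain complex of real inner product spaces in its own right, and form its Hodge basis and Hodge matching. Since the subsequent lemma gives $\HH(\Ker\Psi) = 0$, Eckmann's theorem applied to $\Ker\Psi$ yields $\Ker\Delta = 0$ for its Laplacian; hence, by the Hodge matching lemma, this Hodge matching has no critical cells and pairs every basis vector of $\Ker\Psi$. I then define a based structure on $\C$ whose $n$-cells are the Hodge basis of $\Ker\Psi_n$ together with a single cell $C$ with $C = \Ima\Phi_n$ (or any basis thereof); this is a legitimate based structure precisely because $\C_n = \Ker\Psi_n \oplus \Ima\Phi_n$. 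The candidate matching $\mc{M}$ is the Hodge matching of $\Ker\Psi$, viewed as a selection of edges of $\mc{G}(\C)$, so that the cells comprising $\Ima\Phi$ are exactly the critical cells.

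The crucial structural input is that the splitting of Lemma \ref{projection_lemma_chain_complexes} is one of \emph{chain complexes}, so $\D$ has no components between the two summands and $\mc{G}(\C)$ is the disjoint union of $\mc{G}(\Ker\Psi)$ and $\mc{G}(\Ima\Phi)$, a fact preserved after reversing the edges of $\mc{M}$. From this I would verify the three Morse-matching axioms of Definition \ref{def:morse-matching}: each vertex meets at most one edge of $\mc{M}$ and each paired $\D_{\beta,\alpha}$ is an isomorphism, since both hold for the Hodge matching inside $\Ker\Psi$; and acyclicity holds because all reversed edges lie in $\Ker\Psi$, whose Hodge matching admits no nontrivial same-degree paths, while the $\Ima\Phi$-part contributes only degree-decreasing edges. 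The same disjointness shows that no gradient path joins a cell of $\Ker\Psi$ to a critical cell in $\Ima\Phi$, so the summed indices $\Gamma_{\beta,\alpha}$ of Theorem \ref{main_lemma} collapse: the Morse complex is $\C^\mc{M} = \Ima\Phi$, the map $\Phi^\mc{M}$ is the inclusion $\Ima\Phi \hookrightarrow \C$, and $\Psi^\mc{M}$ is the identity on $\Ima\Phi$ and zero on $\Ker\Psi$, i.e. the projection onto $\Ima\Phi$ along $\Ker\Psi$.

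Finally, to establish equivalence I would take $g = \id_\C$ and $f = \Phi : \mathbf{D} \xrightarrow{\cong} \Ima\Phi = \C^\mc{M}$, which is an isomorphism of chain complexes because $\Psi\Phi = \id_{\mathbf{D}}$. The square for $\Phi$ commutes since $\Phi^\mc{M} f$ is the inclusion precomposed with $\Phi$, which is again $\Phi$; the square for $\Psi$ commutes since $f\Psi = \Phi\Psi$, and by the proof of Lemma \ref{projection_lemma_chain_complexes} this is exactly the projection onto $\Ima\Phi$ with kernel $\Ker\Psi$, which is $\Psi^\mc{M}$. I expect the main difficulty to be bookkeeping rather than conceptual: one must check carefully that the summed-index formulas genuinely reduce $\Phi^\mc{M}$ and $\Psi^\mc{M}$ to the inclusion and the projection, and this rests entirely on the vanishing of the cross-terms of $\D$, hence on the absence of gradient paths between the two summands.
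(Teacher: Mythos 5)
Your proposal is correct and follows essentially the same route as the paper: both use the splitting $\C = \Ker\Psi \oplus \Ima\Phi$ from Lemma \ref{projection_lemma_chain_complexes}, pair all of the acyclic summand $\Ker\Psi$ via its Hodge matching, leave $\Ima\Phi$ entirely critical, and then identify the resulting Morse retraction with $(\Psi,\Phi)$ through the isomorphism $f=\Phi$ and $g=\id_\C$. Your write-up is somewhat more explicit than the paper's about verifying the Morse-matching axioms and the vanishing of cross-component gradient paths, but the underlying argument is the same.
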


\begin{notation}
    We refer to the pairing $\mc{M}$ in this theorem as the \textit{Morsification} of a deformation retract.
\end{notation}

\begin{proof}
    Define a pairing $\mc{M} = \widetilde{M}^\Delta \sqcup \widehat{M}$ on $\C$ as the union of a Hodge pairing $\widetilde{M}^\Delta$ on $\Ker\Psi$ (which is given the subspace inner product) and the trivial pairing $\widehat{M}$ on $\Ima \Phi$. We previously showed that $\C = \Ker\Psi \oplus \Ima\Phi$ and $\HH(\C) = \HH(\Ima\Phi)$, implying that $\HH(\Ker\Psi)=0$. Consequently, all the basis elements in $\Ker\Psi$ are paired by the Hodge pairing, and further, the Morse retraction maps 
    \begin{center}
        \begin{tikzcd}
        \HH(\Ker\Psi) \cong 0 \ar[r, shift right, "\Phi^{\widetilde{M}^\Delta}"'] & \Ker\Psi \ar[l, shift right, "\Psi^{\widetilde{M}^\Delta}"']
    \end{tikzcd}
    \end{center}
    defined by the matching $\widetilde{M}^\Delta$ are trivial.
    
    On the other hand, since $\widehat{M}$ is the trivial pairing, the entirety of $\Ima\Phi$ is critical in the pairing $\mc{M}$. Further, the Morse boundary operator $\D^{\widehat{M}}$ is the same as the boundary operator on $\C$, implying $\C^M = \Ima \Phi$ and that the maps
    \begin{center}
        \begin{tikzcd}
        \C^M \cong \Ima\Phi \ar[r, shift right, "\Phi^{\widehat{M}}"'] & \Ima\Phi \ar[l, shift right, "\Psi^{\widehat{M}}"']
    \end{tikzcd}
    \end{center}
    are identities. We conclude that $\Phi^\mc{M} \Psi^\mc{M} = i_{\Ima \Phi} \circ \pi_{\Ima \Phi}$, where $i_{\Ima \Phi} : \Ima_\Phi \hookrightarrow \C$ is the inclusion. 
    
    Now we show that this is equivalent to the original deformation retract. To do so, first note that $\Phi : D \to \Ima \Phi$ is an isomorphism. We then need to show that the following diagram
    \begin{center}
        \begin{tikzcd}[column sep=2em, row sep=3em]
            & \C \ar[dl, "\Psi"'] \ar[dr, "\Psi^{\mc{M}}"] & \\
            \mathbf{D} \ar[rr, "\Phi"', "\cong"] & & \Ima\Phi
        \end{tikzcd}
    \end{center}
    commutes. For any $(s, \Phi(t)) \in \C = \Ker\Psi \oplus \Ima\Phi$, we have
    $$\Phi \Psi(s, \Phi(t)) = (\Phi \Psi(s), \Phi \Psi \Phi(t)) = (0,\Phi(t)) = i \circ \pi_{\Ima \Phi}(s, \Phi(t)) = \Phi^{\mc{M}} \Psi^{\mc{M}}(s,\Phi(t))$$ as required. Finally, to see that
     \begin{center}
        \begin{tikzcd}[column sep=2em, row sep=3em]
            & \C   & \\
            \mathbf{D} \ar[ur, "\Phi"] \ar[rr, "\Phi"', "\cong"] & & \Ima\Phi \ar[ul, "\Phi^{\mc{M}}"']
        \end{tikzcd}
    \end{center}
    commutes simply note that $\Phi^\mc{M}$ is the inclusion map.
\end{proof}

\begin{remark}
    When the original deformation retract comes from a Morse matching, the subspace $\Ima \Phi = \Ima \Phi \Psi = \Ker(1-\Phi\Psi)$ is the space of \textit{flow-invariant chains} used by Forman in his foundational articles \cite{Forman1998,Forman2002}. The difference here is that these chains are linear combinations of genuine critical cells, albeit for a Morse matching in a new base.
\end{remark}

It is not difficult to see that the Morsification of a deformation retract is unique up to a choice of bases in the eigenspaces of $\Delta^+$ and $\Delta^-$, and that each such choice produces equivalent deformation retracts. Combining Theorem \ref{morsification} with Equation \ref{reconstruction_equality}, we get a simple expression for the reconstruction error of a deformation retract in terms of the paired cells in its Morsification.

\begin{corollary} \label{reconstruction_closedform}
    Any deformation retract
    \begin{center}
    \begin{tikzcd}
        \mathbf{D} \ar[r, shift right, "\Phi"'] & \C \ar[l, shift right, "\Psi"'] \ar[l, loop right, "h"]
    \end{tikzcd}
    \end{center} 
    of finite-type chain complexes of real inner product spaces and Morsification $\mc{M}$
    $$1 - \Psi\Phi = \sum_{ \alpha \in I^\mc{M} \setminus \mc{M}^0} i_\alpha \circ \pi_\alpha$$
\end{corollary}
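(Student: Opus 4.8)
The plan is to read the result directly off the structure of the Morsification established in Theorem \ref{morsification}. The key observation is that the operator in the statement is built from the reconstruction operator $\Phi\Psi$ acting on $\C$ (note $\Psi\Phi = \id_{\mathbf{D}}$ already, so the meaningful operator on $\C$ is $\Phi\Psi$), and by Equation \ref{reconstruction_equality} this operator is \emph{unchanged} when we replace $(\Psi,\Phi)$ by its Morsification $(\Psi^\mc{M},\Phi^\mc{M})$, since the latter shares the domain $\C$ and $\C' = \C$ with $g = \id$. Thus it suffices to prove the identity for the Morse retraction induced by $\mc{M}$.

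First I would recall from the proof of Theorem \ref{morsification} that $\mc{M} = \widetilde{M}^\Delta \sqcup \widehat{M}$, where $\widetilde{M}^\Delta$ is a Hodge pairing on $\Ker\Psi$ and $\widehat{M}$ is the trivial pairing on $\Ima\Phi$. Since $\HH(\Ker\Psi) = 0$, every basis element of $\Ker\Psi$ is paired, so the paired cells $I^\mc{M}\setminus\mc{M}^0$ form a basis of $\Ker\Psi$; dually, the critical cells $\mc{M}^0$ are precisely the cells spanning $\Ima\Phi$. In particular the based decomposition $\C = \bigoplus_{\alpha \in I^\mc{M}} C_\alpha$ refines the splitting $\C = \Ker\Psi \oplus \Ima\Phi$ of Lemma \ref{projection_lemma_chain_complexes}.

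Next I would invoke the fact, also established in the proof of Theorem \ref{morsification}, that $\Phi^\mc{M}\Psi^\mc{M} = i_{\Ima\Phi} \circ \pi_{\Ima\Phi}$ is the projection onto $\Ima\Phi$ along $\Ker\Psi$. For any direct sum decomposition the identity decomposes as $\id_\C = \sum_{\alpha \in I^\mc{M}} i_\alpha \circ \pi_\alpha$, and grouping this sum according to whether $\alpha$ is critical or paired gives
$$\id_\C = \sum_{\alpha \in \mc{M}^0} i_\alpha \circ \pi_\alpha + \sum_{\alpha \in I^\mc{M}\setminus\mc{M}^0} i_\alpha \circ \pi_\alpha.$$
Because the first sum is exactly the projection onto $\Ima\Phi = \Span\{C_\alpha : \alpha \in \mc{M}^0\}$ along $\Ker\Psi$, it coincides with $\Phi^\mc{M}\Psi^\mc{M} = \Phi\Psi$. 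Subtracting then yields $1 - \Phi\Psi = \sum_{\alpha \in I^\mc{M}\setminus\mc{M}^0} i_\alpha \circ \pi_\alpha$, as claimed.

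The argument is essentially bookkeeping once Morsification is in hand, so I do not anticipate a serious obstacle. The one point requiring care is that the projections $\pi_\alpha$ are the components of the based decomposition $\C = \bigoplus C_\alpha$ and are \emph{not} orthogonal projections in general — the complement along which each projects is dictated by the critical cells, i.e. by $\Ima\Phi$. One must therefore check that the partition of $I^\mc{M}$ into paired and critical cells matches the splitting $\Ker\Psi \oplus \Ima\Phi$ exactly, rather than merely up to homology; this is precisely what the vanishing $\HH(\Ker\Psi) = 0$ guarantees, since it forces every cell of $\Ker\Psi$ to be paired by the Hodge pairing $\widetilde{M}^\Delta$.
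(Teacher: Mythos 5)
Your proof is correct and follows essentially the same route as the paper's: both reduce to the Morsification via Equation \ref{reconstruction_equality}, identify $\Phi\Psi = i_{\Ima\Phi}\circ\pi_{\Ima\Phi}$ from Theorem \ref{morsification}, and then observe that the complementary projection decomposes as the sum of $i_\alpha\circ\pi_\alpha$ over the paired cells because these span $\Ker\Psi$. Your added care about the $\pi_\alpha$ being the (non-orthogonal) based projections and about the paired/critical partition matching $\Ker\Psi\oplus\Ima\Phi$ exactly is a welcome clarification, but it does not change the argument.
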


\begin{proof}
    By Equation \ref{reconstruction_equality} and Theorem \ref{morsification}, we have
    $$ 1 - \Phi \Psi =  1-  i_{\Ima \Phi} \circ \pi_{\Ima \Phi} = i_{\Ker \Phi \Psi} \circ \pi_{\Ker \Phi \Psi} = \sum{ \alpha \in I^\mc{M} \setminus \mc{M}^0} i_\alpha \circ \pi_\alpha$$
    which proves the statement, noting that the paired cells in $\mc{M}$ span $\Ker \Psi$.
\end{proof}

In the case that the deformation retract arises from a Morse matching on a based complex, the Morsification construction will most likely alter the base. However, the number of pairings and critical cells in each dimension are related, as described in the following proposition. 

\begin{notation}
 For a sequential Morse matching $\M$ on a based chain complex $(\C, I)$, let $\M_n^-$ and $\M_n^+$ denote the elements of $I_n$ that are the union of all start and endpoints respectively of edges in each of the matchings $M_{(i),n}$ for all $i$. This means that $$I_n = \M_n^- \sqcup \M_n^0 \sqcup \M_n^+.$$ Further, let $$ \lvert \M_n^* \rvert = \sum_{\alpha \in \M_n^*} \dim C_\alpha$$ where $* \in \{ +,-,0 \}$, and the subscript $n$ refers to the dimension of the cells.   
\end{notation}

\begin{proposition} \label{prop:weakmorse_dim}
 Let $\M $ be a  sequential Morse matching on a finite-type based chain complex $(\C,I)$ of real inner product spaces and $\mc{M}$ be its Morsification. Then
 $$\lvert \mc{M}_n^* \rvert = \lvert \M_n^* \rvert $$
 for $* \in \{ +,-,0 \}$,
in each dimension $n \geq 0$.
\end{proposition}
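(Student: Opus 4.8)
The plan is to reduce the proposition to a simple recursion relating the dimension counts in consecutive degrees. Write $a_n = \lvert \M_n^- \rvert$, $b_n = \lvert \M_n^+ \rvert$ and $a_n' = \lvert \mc{M}_n^- \rvert$, $b_n' = \lvert \mc{M}_n^+ \rvert$. Both the partition $I_n = \M_n^- \sqcup \M_n^0 \sqcup \M_n^+$ and the Hodge basis $I^{\mc{M}}$ give direct-sum decompositions of $\C_n$ into based summands, so taking total dimensions yields $a_n + \lvert \M_n^0 \rvert + b_n = \dim \C_n = a_n' + \lvert \mc{M}_n^0 \rvert + b_n'$. I would first dispatch the case $* = 0$, which will simultaneously deliver the identity $a_n + b_n = a_n' + b_n'$.

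For the $0$-case I would appeal to the structure of the Morsification from Theorem \ref{morsification}. There, every basis element of $\Ima \Phi$ is critical in $\mc{M}$ while all paired cells span $\Ker \Psi$; since these critical cells are one-dimensional eigenvector cells, $\lvert \mc{M}_n^0 \rvert = \dim (\Ima \Phi)_n$. Because $\Phi$ restricts to an isomorphism $\mathbf{D} \xrightarrow{\cong} \Ima \Phi$ and here $\mathbf{D} = \C^{\M}$, this equals $\dim \C^{\M}_n = \lvert \M_n^0 \rvert$, proving the $* = 0$ case. Subtracting from $\dim \C_n$ then gives $a_n + b_n = a_n' + b_n'$ for every $n$.

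Next I would record the internal balance relations coming from the fact that every pairing is a $(k,k-1)$-pairing realised by an isomorphism $\D_{\beta,\alpha} : C_\alpha \to C_\beta$ (Definition \ref{def:morse-matching}(2)), so that $\dim C_\alpha = \dim C_\beta$. Summing over all pairings with top cell in degree $n$ then yields a dimension-preserving bijection between degree-$n$ start points and degree-$(n-1)$ end points, hence $a_n = b_{n-1}$ for $\M$ and likewise $a_n' = b_{n-1}'$ for $\mc{M}$. In the sequential case one must note that a cell paired at any stage is thereafter removed, so the three sets still partition $I_n$, and each pairing — though it may live in a later reduced complex $\C^{M_{(j)}}$ — still matches original based summands of equal dimension via the isomorphism $\D^{M_{(j)}}_{\beta,\alpha}$.

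Finally I would solve the recursion. Setting $d_n = a_n - a_n'$, the relations $b_n = a_{n+1}$ and $b_n' = a_{n+1}'$ turn $a_n + b_n = a_n' + b_n'$ into $d_n + d_{n+1} = 0$. Since no $0$-cell can be a start point (there are no $(-1)$-cells), $a_0 = a_0' = 0$, so $d_0 = 0$ and the recursion forces $d_n = 0$ for all $n$. This gives $\lvert \M_n^- \rvert = \lvert \mc{M}_n^- \rvert$, and then $\lvert \M_n^+ \rvert = a_{n+1} = a_{n+1}' = \lvert \mc{M}_n^+ \rvert$, completing all three cases. The only genuine subtlety — and the step I would be most careful about — is the bookkeeping in the sequential case that justifies $a_n = b_{n-1}$ together with the identification $\lvert \mc{M}_n^0 \rvert = \dim \C^{\M}_n$; once these are secured the recursion is immediate.
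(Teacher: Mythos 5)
Your proposal is correct and follows essentially the same route as the paper's proof: both use Theorem \ref{morsification} to get $\lvert \M_n^0\rvert = \dim \C_n^{\M} = \dim \C_n^{\mc{M}} = \lvert \mc{M}_n^0\rvert$, deduce $\lvert \M_n^+\rvert + \lvert \M_n^-\rvert = \lvert \mc{M}_n^+\rvert + \lvert \mc{M}_n^-\rvert$ from the dimension count of $\C_n$, anchor the induction at $\lvert \M_0^-\rvert = 0$, and propagate via the dimension-preserving bijection between up-paired $n$-cells and down-paired $(n+1)$-cells. Your recursion $d_n + d_{n+1} = 0$ is just a cleaner packaging of the paper's inductive step, and your explicit attention to the sequential-case bookkeeping is a welcome addition rather than a deviation.
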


\begin{proof}
    By Theorem \ref{morsification} we know that $\C^{\M} \cong \C^{\mc{M}}$, implying that the dimensions spanned by critical cells $$\abs{\M_n^0} = \dim \C_n^{\M} = \dim \C_n^{\mc{M}} = \abs{\mc{M}_n^0}$$
    are equal for all $n$. This implies that
    \begin{equation} \label{up_down_summation}
        \abs{\M_n^+} + \abs{\M_n^-} = \dim \C_n - \dim \C_n^{\M}= \abs{\mc{M}_n^+} + \abs{\mc{M}_n^-}
    \end{equation}
    where we have used the identity $\dim \C_n = \abs{\M_n^+} + \abs{\M_n^-} + \abs{\M_n^0}$. 
    
    Since the chain complex is concentrated in non-negative degrees, cells in dimension 0 can be paired only with elements in dimension $1$, implying that $\abs{\M_0^-} = \abs{\mc{M}_0^-} = 0$. Combining this with Equation \ref{up_down_summation} we conclude that $\abs{\M_0^+} = \abs{\mc{M}_0^+}$. The bijection between cells paired up in dimension $i$ with those paired down in dimension $i+1$ then implies that $$\abs{\M_1^-} = \abs{\M_0^+} = \abs{\mc{M}_0^+} = \abs{\mc{M}_1^-},$$ and, again using Equation \ref{up_down_summation}, that $\abs{\M_1^+} = \abs{\M_1^+}$. By inductively performing this procedure, we prove the result for all $n$ as required.
\end{proof}

It is not difficult to see that two equivalent Morse retractions of $\C$ must have the same Morsification. Thus the above proposition then implies that when two sequential Morse retractions $\M$ and $\M'$ of a complex $\C$ under two different bases $I$ and $I'$ are equivalent, there are equalities between the number of dimensions paired up $\abs{\M_n^+} = \abs{\M_{n}^{'+}}$ and down $\abs{\M_n^-} = \abs{\M_n^-}$ for all $n$. Notably, this occurs independently of the bases $I$ and $I'$.

\section{(Co)cycle Preservation and Sparsification}\label{sec:preservation}
Discrete Morse theory aims to reduce the dimension of a chain complex while preserving its homology. Meanwhile, for combinatorial Hodge theory, understanding the effect of deformation on the components of the Hodge decomposition is of equal importance. However, because of the `adjointness' inherent in the Hodge decomposition, neither chain or cochain maps between two complexes usually respect the grading of the Hodge decomposition. 

Here, we define a different notion of preservation by examining the effect of applying either $\Phi \Psi$ or $\Psi^\dagger \Phi^\dag$ to an element $s \in \C_n$. For a pair of chain maps
 \begin{center}
    \begin{tikzcd}
        \mathbf{D} \ar[r, shift right, "\Phi"'] & \C \ar[l, shift right, "\Psi"'] 
    \end{tikzcd}
    \end{center}
we define the \textit{topological reconstruction error} at $s\in \C$ as $\Phi\Psi s - s \in \C$. The goal of this section is to examine the projection of $\Phi\Psi s - s$ on the different components of the Hodge decomposition. In particular, we describe which components of the signal are preserved and discarded by $\Phi \Psi$ when the deformation retract arises from a $(n,n-1)$-free Morse matching, a special type of (sequential) Morse matchings described in the next section. 
Further, we show that for such matchings the reconstruction $\Phi\Psi s$ (or $\Psi^\dag\Phi^\dagger s$) is supported only on the critical cells, and serves to sparsify the data on the original complex while preserving the (co)cycle information. 

\subsection{\texorpdfstring{$\boldsymbol{(n, n-1)}$}{(n,n-1)}-free Matchings}

\begin{definition}   A Morse matching $M$ is said to be $\mathit{(n,n-1)}$\textit{-free} if $\lvert M_n^- \rvert = 0$. 
\end{definition}

  An equivalent condition is that $\lvert M_{n-1}^+ \rvert = 0$. Put simply, a Morse matching is $(n,n-1)$-free if no $n$-cells are paired with $(n-1)$-cells. In what follows, the mantra is that preservation of (co)cycle information in dimension $n-1$ (or $n$) is equivalent to absence of such pairings. We define an $\mathit{(n,n-1)}$\textit{-free sequential Morse matching} $\M=(M_{(1)},\dots,M_{(k)})$ to be a sequential Morse matching where all $M_i$ are $(n,n-1)$-free Morse matchings.

\begin{figure}
    \centering
    \includegraphics[width = 0.5\linewidth]{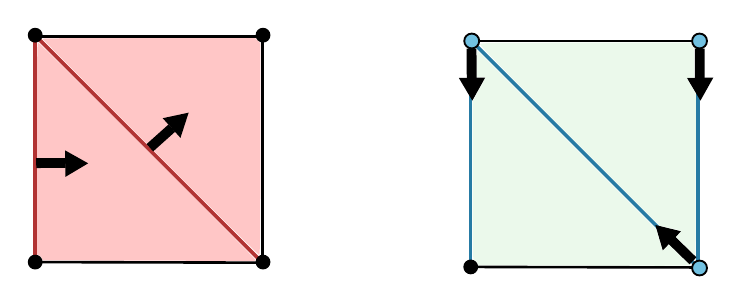}
    \caption{Two Morse matchings -- the left is $(1,0)$-free and the right is $(2,1)$-free.}
    \label{fig:updown_example}
\end{figure}

\begin{example}
    Figure \ref{fig:updown_example} shows a $(1,0)$-free and a $(2,1)$-free matching. The matchings are computed on the cellular chain complex of the depicted cell complex, based with the standard cellular basis. We visually depicted the pairings in the macthings by black arrows. Note that being $(n,n-1)$-free does not necessarily prohibit all $n$ or $(n-1)$-cells from appearing in the matching, implying that $(n,n-1)$-free matchings can still lead to dimension reduction of both $\C_n$ and $\C_{n-1}$.
\end{example}

\begin{example}
    If $\C$ is finite-type chain complex of real inner product spaces such that $\D_n = 0$, then the Hodge matching $M^\Delta$ is $(n,n-1)$-free for some choice of Hodge basis $I^\Delta$.
\end{example}

The corollary below, which follows immediately from Proposition \ref{prop:weakmorse_dim}, shows that the property of being $(n,n-1)$-free is not an artifact of our choice of basis. Namely, if two Morse matchings are equivalent, then either they are both $(n,n-1)$-free or neither is.

\begin{corollary}\label{cor:free-morsification}
    A sequential Morse matching $\M$ on a based chain complex $(\C,I)$ is $(n,n-1)$-free if and only if  its Morsification $\mathcal{M}$ is $(n,n-1)$-free. 
\end{corollary}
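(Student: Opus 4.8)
The plan is to obtain the biconditional directly from Proposition \ref{prop:weakmorse_dim}, which already equates the dimension counts $\lvert \mc{M}_n^* \rvert = \lvert \M_n^* \rvert$ of up-paired, down-paired, and critical cells of a sequential Morse matching with those of its Morsification. The only work left is to rephrase the defining condition of $(n,n-1)$-freeness as a statement about one of these counts, so that the proposition can be invoked verbatim.

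First I would recall that, by definition, $\M$ is $(n,n-1)$-free precisely when $\lvert \M_n^- \rvert = 0$, i.e.\ when no $n$-cell occurs as the source of a down-pairing. Next I would apply Proposition \ref{prop:weakmorse_dim} in the case $* = -$ to conclude $\lvert \mc{M}_n^- \rvert = \lvert \M_n^- \rvert$. Since each side is a non-negative integer (indeed a sum of dimensions $\dim C_\alpha$), one vanishes if and only if the other does. Hence $\lvert \M_n^- \rvert = 0 \iff \lvert \mc{M}_n^- \rvert = 0$, which is exactly the claim that $\M$ is $(n,n-1)$-free if and only if $\mc{M}$ is. This establishes both directions simultaneously.

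The only subtlety is conceptual rather than computational. Because the Morsification $\mc{M}$ is taken over a different base (the Hodge basis of the discarded summand $\Ker\Psi$ of the splitting $\C = \Ker\Psi \oplus \Ima\Phi$), it is not immediate that a downward $n$-pairing in $\M$ should correspond to one in $\mc{M}$, nor that their total dimensions should agree. That basis-independence of the paired dimensions is precisely the content of Proposition \ref{prop:weakmorse_dim}, so once it is in hand no further argument is required. Accordingly I expect no genuine obstacle: the corollary is a one-line consequence of the preceding proposition, and the proof reduces to unwinding the definition and citing the dimension equality for $* = -$.
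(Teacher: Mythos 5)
Your argument is exactly the paper's: the corollary is stated as an immediate consequence of Proposition \ref{prop:weakmorse_dim}, and your unwinding of $(n,n-1)$-freeness as $\lvert \M_n^- \rvert = 0$ together with the equality $\lvert \mc{M}_n^- \rvert = \lvert \M_n^- \rvert$ of non-negative quantities is the intended one-line proof. No gaps.
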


\subsection{(Co)cycle Preservation for \texorpdfstring{$\boldsymbol{(n,n-1)}$}{(n,n-1)}-free Matchings}
\label{subsec:preservation}
The following reconstruction theorem shows that both the topological reconstruction error of the deformation retract and its adjoint are supported on non-kernel components of the Hodge decomposition.
\begin{theorem}[Reconstruction]\label{thm:reconstrcution} 
    Suppose that $M$ is a Morse matching on a finite-type based chain complex $(\C,I)$ of real inner product spaces. Let 
        \begin{center}
    \begin{tikzcd}
        \C^M \ar[r, shift right, "\Phi"'] & \C \ar[l, shift right, "\Psi"'] \ar[l, loop right, "h"]
    \end{tikzcd}
    \end{center}
    be the deformation retract given by Theorem \ref{main_lemma}. Then 
    \begin{enumerate}
        \item for all $s \in \C_n$, $$\mathrm{Proj}_{\Ker \D_{n+1}^\dagger} (\Phi \Psi s - s) = 0 \text{, and}$$
        \item for all $s \in \C_{n-1}$, $$\mathrm{Proj}_{\Ker \D_{n-1}} (\Psi^\dagger \Phi^\dagger s - s) = 0$$
    \end{enumerate}
if and only if $M$ is a $(n,n-1)$-free matching.
\end{theorem}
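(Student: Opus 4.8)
The plan is to recast conditions (1) and (2) as inclusions of subspaces of $\C$ and then read them off the decomposition $\C = \Ker\Psi \oplus \Ima\Phi$ of Lemma \ref{projection_lemma_chain_complexes}, exploiting that the summand $\Ker\Psi$ is acyclic. The starting point is that $\Phi\Psi$ is the (in general oblique) projection onto $\Ima\Phi$ along $\Ker\Psi$, so that $1-\Phi\Psi$ is the projection onto $\Ker\Psi$ along $\Ima\Phi$. Dually, $\Psi^\dagger\Phi^\dagger = (\Phi\Psi)^\dagger$ is the projection onto $(\Ker\Psi)^\perp$ along $(\Ima\Phi)^\perp$, using the standard fact that the adjoint of a projection onto $A$ along $B$ is the projection onto $B^\perp$ along $A^\perp$. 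This composite depends only on the inner product of $\C$, so condition (2) is well posed irrespective of any inner product chosen on $\C^M$.

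First I would translate the two conditions into subspace inclusions. As $s$ ranges over $\C_n$, the vector $(1-\Phi\Psi)s$ sweeps out all of $\Ker\Psi$ in degree $n$; since $(\Ker\D_{n+1}^\dagger)^\perp = \Ima\D_{n+1}$ by Equation \ref{eq:decomp}, condition (1) is equivalent to $(\Ker\Psi)_n \subseteq \Ima\D_{n+1}$. Symmetrically, $(1-(\Phi\Psi)^\dagger)s$ sweeps out $(\Ima\Phi)^\perp$ in degree $n-1$, and $(\Ker\D_{n-1})^\perp = \Ima\D_{n-1}^\dagger$, so condition (2) is equivalent to $((\Ima\Phi)_{n-1})^\perp \subseteq \Ima\D_{n-1}^\dagger$, i.e.\ to $\Ker\D_{n-1} \subseteq (\Ima\Phi)_{n-1}$ after taking orthogonal complements.

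Next I would identify the combinatorial hypothesis with a vanishing condition on $\Ker\Psi$. By Corollary \ref{cor:free-morsification} I may replace $M$ by its Morsification $\mc{M}$ without altering either $\Phi\Psi$ (Equation \ref{reconstruction_equality}) or the subcomplex $\Ker\Psi$. Since $\mc{M}$ is a Hodge matching on $\Ker\Psi$ together with the trivial matching on $\Ima\Phi$, its down-paired $n$-cells are precisely $\mc{R}_+(\restr{\D_n}{\Ker\Psi})$; hence $M$ is $(n,n-1)$-free if and only if $\restr{\D_n}{\Ker\Psi}=0$, equivalently $(\Ker\Psi)_n \subseteq \Ker\D_n$. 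It then remains to close the loop using acyclicity $\HH(\Ker\Psi)=0$. For (1) the inclusion $\Ima\D_{n+1}\subseteq\Ker\D_n$ gives one direction at once, while conversely any $x \in (\Ker\Psi)_n \subseteq \Ker\D_n$ is a cycle of the acyclic complex $\Ker\Psi$, hence a boundary $x=\D_{n+1}y$ with $y \in \Ker\Psi$, so $x \in \Ima\D_{n+1}$. For (2), if $\restr{\D_n}{\Ker\Psi}=0$ then acyclicity forces $\restr{\D_{n-1}}{\Ker\Psi}$ to be injective; writing a cycle $z\in\Ker\D_{n-1}$ as $z=a+\Phi b$ with $a\in\Ker\Psi$ and applying $\D_{n-1}$, the direct-sum splitting in degree $n-2$ separates the two summands and injectivity gives $a=0$, so $z\in(\Ima\Phi)_{n-1}$; the converse intersects $\Ker\D_{n-1}\subseteq(\Ima\Phi)_{n-1}$ with $\Ker\Psi$ to force $\Ker(\restr{\D_{n-1}}{\Ker\Psi})=0$, whence acyclicity yields $\restr{\D_n}{\Ker\Psi}=0$. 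Each condition is thus individually equivalent to $(n,n-1)$-freeness, and the stated biconditional follows.

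I expect the two genuinely delicate points to be the following. First, getting the adjoint projection in (2) correct: one must see that $\Psi^\dagger\Phi^\dagger$ is a projection with kernel $(\Ima\Phi)^\perp$ rather than $\Ker\Psi$, precisely because $\Ker\Psi$ and $\Ima\Phi$ need not be orthogonal, so the Hodge-theoretic complements must be computed with care. Second, converting the purely combinatorial $(n,n-1)$-free condition into the algebraic vanishing $\restr{\D_n}{\Ker\Psi}=0$ — this is exactly the step the Morsification is designed to supply, and it is what lets acyclicity of $\Ker\Psi$ do the remaining work.
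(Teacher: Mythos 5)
Your proof is correct, but it takes a genuinely different route from the paper's. The paper proves the forward direction by reading off Sk\"oldberg's explicit homotopy: $(n,n-1)$-freeness kills all paths from $(n-1)$-cells up to $n$-cells, so $h_{n-1}=0$ and $(\Phi\Psi-1)_n=\D_{n+1}h_n\in\Ima\D_{n+1}$, with claim (2) obtained by taking adjoints one degree lower; only for the converse does it invoke Morsification, exhibiting a paired $n$-cell $x$ with $(\Phi^{\mc M}\Psi^{\mc M}-1)x=x$ and $\D_n x\neq 0$. You instead run everything through the splitting $\C=\Ker\Psi\oplus\Ima\Phi$: you convert condition (1) into $(\Ker\Psi)_n\subseteq\Ima\D_{n+1}$, condition (2) into $\Ker\D_{n-1}\subseteq(\Ima\Phi)_{n-1}$, and $(n,n-1)$-freeness into $\restr{\D_n}{\Ker\Psi}=0$ via the Hodge matching on $\Ker\Psi$, then close the loop with acyclicity $\HH(\Ker\Psi)=0$. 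What the paper's argument buys is a short, computation-free forward direction that never needs the Morsification machinery; what yours buys is uniformity (both directions flow through one splitting), a strictly stronger conclusion (each of (1) and (2) is \emph{individually} equivalent to $(n,n-1)$-freeness, whereas the paper's converse only refutes the conjunction by breaking (1)), and immediate applicability to arbitrary deformation retracts, which the paper only recovers later as corollaries. Two small remarks: in your final step, $\Ima(\restr{\D_n}{\Ker\Psi})=0$ follows from $\D^2=0$ together with $\Ker(\restr{\D_{n-1}}{\Ker\Psi})=0$ rather than from acyclicity per se; and your identification of the adjoint of an oblique projection (image $(\Ker\Psi)^\perp$, kernel $(\Ima\Phi)^\perp$) is exactly the care the paper sidesteps by working with $\D_{n-1}^\dagger h_{n-2}^\dagger$ directly.
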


\begin{proof}
We first prove that if $M$ is a $(n,n-1)$-free matching, then conditions $(1)$ and $(2)$ hold.
    If $M_n^- = \emptyset$, then there are no paths in $\mc{G}(\C)^M$ from an $(n-1)$-cell to an $n$-cell. Theorem \ref{main_lemma} then implies that $h_{n-1}(x) = 0$ for all $\alpha \in I_{n-1}$ and $x \in C_\alpha$, whence
    \begin{equation} \label{eq1}
    (\Phi \Psi - 1)_n = \D_{n+1} h_n + h_{n-1} \D_n = \D_{n+1} h_n.
    \end{equation}
    The first claim now follows from the orthogonal decomposition $$\C_n = \Ker \D_{n+1}^\dagger \oplus \Ima \D_{n+1}.$$ 
    
    The argument above also shows that $h^\dag_{n-1} = 0$, since the adjoint of the zero map is the zero map. By taking the adjoint of Equation  \ref{eq1} one dimension lower, it then follows that
    \begin{equation*}
        (\Psi^\dagger \Phi^\dagger - 1)_{n-1} = (\Phi \Psi-1)^\dag_{n-1} =\D_{n-1}^\dagger  h_{n-2}^\dagger + h_{n-1}^\dagger \D_{n}^\dagger = \D_{n-1}^\dagger  h_{n-2}^\dag.
    \end{equation*}
    The second claim is then a consequence of the orthogonal decomposition $\C_{n-1} = \Ker \D_{n-1} \oplus \Ima \D_{n-1}^\dag.$

For the other direction we will prove the contrapositive statement. It is sufficient to show that if the Morse matching is not $(n,n-1)$-free, then there exists $s\in \C_n$ such that $$\mathrm{Proj}_{\Ker \D_{n+1}^\dagger} (\Phi \Psi s - s) \neq  0.$$ The Morse matching $M$ is $(n,n-1)$-free if and only if its Morsification $\mathcal{M}$ is $(n,n-1)$-free (Corollary \ref{cor:free-morsification}) and, further,  $1-\Phi^M\Psi^M=1-\Phi^{\mathcal{M}}\Psi^{\mathcal{M}}$ (Equation \ref{reconstruction_equality}). Therefore, it is sufficient to prove the contrapositive statement for the Morsification.

Since the Morsification is not $(n,n-1)$-free, there exists an $(n,n-1)$-pair $\alpha \to \beta$ such that $\D_{\beta,\alpha}$ is an isomorphism. Recall that by \ref{reconstruction_closedform}, we have that $(\Phi^{\mathcal{M}}\Psi^{\mathcal{M}}-1)x=x$ for $x \in C_\alpha$. The orthogonal decomposition of $\C_n$ implies that $$x=\mathrm{Proj}_{\Ker \D_{n}}x + \mathrm{Proj}_{\Ima\D_n^\dagger}x.$$ Applying $\D_n$ and using the fact that $\D_n(x) \neq 0$, we obtain

$$0 \neq \D_n \mathrm{Proj}_{\Ker \D_{n}}x + \D_n \mathrm{Proj}_{\Ima_n^\dagger}x=\D_n \mathrm{Proj}_{\Ima \D_n^\dagger}x.  $$ Since $\Ima\D_n^\dagger \subseteq \Ker \D_{n+1}^\dag$, this implies that 
$$ 0\neq \mathrm{Proj}_{\Ker \D_{n+1}^\dagger}x=\mathrm{Proj}_{\Ker \D_{n+1}^\dagger} (\Phi^{\mathcal{M}} \Psi^{\mathcal{M}} -1)x=\mathrm{Proj}_{\Ker \D_{n+1}^\dagger} (\Phi^{M} \Psi^{M} -1)x,$$
which proves our statement.

\end{proof}

The utility of the theorem above is that an $(n,n-1)$-free matching $M$ reduces the dimension of $\C_n$, while perfectly preserving the $n$-cocycles of a signal $s \in \C_n$ under the reconstruction $\Phi_n \Psi_n$. The extent of this reduction depends on the $(n+1,n)$-pairs in $M$. Indeed, the direct sum of the components $\bigoplus_{\alpha \in M_n^+} C_\alpha$ of $n$-cells in such pairs is isomorphic to the subspace $\Ker \Psi_n$ discarded by the deformation retract. One way to see this is using the fact that the Morsification has the same pair structure as the sequential Morse matching, and the Morsification $\Phi^{\mc{M}}$ is zero on non-critical cells.

 If, on the other hand, one is interested in preserving the cycle information of a signal $s \in \C_{n-1}$, then one can use the adjoint maps $\Phi^\dagger \Psi^\dag$ to perform a similar procedure. Namely, an $(n,n-1)$-free matching $M$ will perfectly preserve the $(n-1)$-cycle part of $s$ under the reconstruction $\Psi_{n-1}^\dagger \Phi_{n-1}^\dag$. Analogously to the dual case, the extent of reduction depends on the $(n-1,n-2)$-pairings, where the subspace $\bigoplus_{\alpha \in M_{n-1}^-} C_\alpha$ is isomorphic to the discarded subspace $\Ker \Phi_{n-1}^\dag$.

Using Morsification, we can extend the (co)cycle reconstruction theorem to $(n,n-1)$-free sequential Morse matchings.

\begin{corollary}\label{cor:weak-reconstruction}
Let $\M$ be a sequential Morse matching on a based chain complex $(\C,I)$. Then the (co)cycle preservation conditions $(1)$ and $(2)$ of Theorem \ref{thm:reconstrcution} hold if and only if $\M$ is $(n,n-1)$-free.
\begin{proof}
By Corollary \ref{cor:free-morsification} we know that $\M$ is $(n,n-1)$-free if and only if its Morsification $\mathcal{M}$ is $(n,n-1)$-free. Further, we know that $$1-\Phi^{\M}\Psi^{\M}=1-\Phi^{\mathcal{M}}\Psi^{\mathcal{M}}$$ by Equation \ref{reconstruction_equality}. Then the statement follows by applying Theorem~\ref{thm:reconstrcution} to $\C$ and $\mathcal{M}$. 
\end{proof}

\end{corollary}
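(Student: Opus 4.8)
The plan is to reduce the statement for a \emph{sequential} Morse matching $\M$ to the already-established statement for a \emph{single} Morse matching, exploiting the fact that the reconstruction error depends only on the composite $\Phi\Psi$ and that Morsification collapses the entire sequence into a single Morse matching over $\C$. The key observation I would lean on is that conditions (1) and (2) of Theorem \ref{thm:reconstrcution} are phrased purely in terms of $\Phi\Psi$ and its adjoint $\Psi^\dagger\Phi^\dagger = (\Phi\Psi)^\dagger$, so they are invariant under passing to an equivalent deformation retract over the same complex $\C$.

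First I would invoke Theorem \ref{morsification} to produce the Morsification $\mathcal{M}$ of the deformation retract associated to $\M$; this is a single Morse matching over $\C$ (in a Hodge basis) whose retraction is equivalent to that of $\M$. By Equation \ref{reconstruction_equality}, equivalence over the shared domain $\C$ forces $\Phi^{\M}\Psi^{\M} = \Phi^{\mathcal{M}}\Psi^{\mathcal{M}}$, and hence $1 - \Phi^{\M}\Psi^{\M} = 1 - \Phi^{\mathcal{M}}\Psi^{\mathcal{M}}$. Taking adjoints gives the companion identity $(\Psi^{\M})^\dagger(\Phi^{\M})^\dagger = (\Psi^{\mathcal{M}})^\dagger(\Phi^{\mathcal{M}})^\dagger$, so both reconstruction conditions hold for $\M$ if and only if they hold for $\mathcal{M}$.

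Next, since $\mathcal{M}$ is a single Morse matching of the type covered by Theorem \ref{main_lemma}, I can apply Theorem \ref{thm:reconstrcution} directly to the pair $(\C, \mathcal{M})$: conditions (1) and (2) hold for $\mathcal{M}$ if and only if $\mathcal{M}$ is $(n,n-1)$-free. Corollary \ref{cor:free-morsification} then supplies the final link in the chain, namely that $\mathcal{M}$ is $(n,n-1)$-free if and only if $\M$ is. Concatenating the three equivalences --- conditions hold for $\M$ $\Leftrightarrow$ conditions hold for $\mathcal{M}$ $\Leftrightarrow$ $\mathcal{M}$ is $(n,n-1)$-free $\Leftrightarrow$ $\M$ is $(n,n-1)$-free --- yields the statement.

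The main obstacle is conceptual rather than computational: one must recognise that the passage from a multi-step sequential matching to a single matching is precisely what Morsification accomplishes, and that both properties in question --- $(\mathrm{co})$cycle reconstruction and $(n,n-1)$-freeness --- are invariants of the equivalence class of the deformation retract. Once this is in place, no new manipulation of the homotopy $h$ or of the summed indices $\Gamma_{\beta,\alpha}$ is needed; the single-matching case of Theorem \ref{thm:reconstrcution} performs all the analytic work, and the corollary follows formally.
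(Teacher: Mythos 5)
Your proposal is correct and follows essentially the same route as the paper's proof: pass to the Morsification $\mathcal{M}$, use Equation \ref{reconstruction_equality} to identify $1-\Phi^{\M}\Psi^{\M}$ with $1-\Phi^{\mathcal{M}}\Psi^{\mathcal{M}}$ (and its adjoint), apply Theorem \ref{thm:reconstrcution} to the single matching $\mathcal{M}$, and close the loop with Corollary \ref{cor:free-morsification}. Your explicit remark that the adjoint identity handles condition (2) is a small clarification the paper leaves implicit, but the argument is the same.
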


One may wonder whether there is a proof by induction that follows directly from Theorem \ref{thm:reconstrcution}. The problem with using induction is that each chain complex in the sequential Morse matching has a different Hodge decomposition, and that the maps between them do not necessarily respect the grading. So Theorem \ref{thm:reconstrcution} implies the (co)cycle preservation conditions will be satisfied between the $i$-th and $(i+1)$-th chain complexes but not necessarily between $\C$ and $\C^{\mc{M}}$.

In the general case of deformation retracts that do not arise from a Morse matching, combining Theorem~\ref{thm:reconstrcution} and Corollary \ref{cor:free-morsification} yields the following.

\begin{corollary}
Let $(\Phi,\Psi)$ be a deformation retract of based finite-type chain complexes $(\C,I)$ and $(\mathbf{D},I')$ of real inner product spaces. Then the (co)cycle preservation conditions $(1)$ and $(2)$ of Theorem \ref{thm:reconstrcution} hold if and only if the Morsification $\mathcal{M}$ associated to $(\Phi,\Psi)$ is $(n,n-1)$-free.
\end{corollary}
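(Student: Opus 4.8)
The plan is to reduce the statement to the case of a genuine Morse matching, which is already settled by Theorem \ref{thm:reconstrcution}, using the Morsification Theorem as the bridge. The key observation is that both preservation conditions $(1)$ and $(2)$ depend on the deformation retract only through the composite $\Phi\Psi$ (condition $(2)$ through its adjoint, since $\Psi^\dagger\Phi^\dagger = (\Phi\Psi)^\dagger$), and that this composite is an invariant of the equivalence class of the retract.

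First I would invoke Theorem \ref{morsification} to produce the Morsification $\mc{M}$ together with the associated Morse retraction $(\Psi^{\mc{M}}, \Phi^{\mc{M}})$ over $\C$, which is equivalent to $(\Phi, \Psi)$. Since this equivalence is realised over the common domain $\C$ with the identity on the $\C$-side, Equation \ref{reconstruction_equality} gives $\Phi\Psi = \Phi^{\mc{M}}\Psi^{\mc{M}}$, hence $1 - \Phi\Psi = 1 - \Phi^{\mc{M}}\Psi^{\mc{M}}$. Taking adjoints yields $\Psi^\dagger\Phi^\dagger = (\Psi^{\mc{M}})^\dagger(\Phi^{\mc{M}})^\dagger$ as well. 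Because the projections $\mathrm{Proj}_{\Ker \D_{n+1}^\dagger}$ and $\mathrm{Proj}_{\Ker \D_{n-1}}$ are determined by the boundary operators of $\C$ alone, conditions $(1)$ and $(2)$ are literally the same expressions whether phrased for $(\Phi, \Psi)$ or for the Morse retraction $(\Psi^{\mc{M}}, \Phi^{\mc{M}})$.

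Next, since $\mc{M}$ is a Morse matching on the based complex $(\C, I^{\mc{M}})$, I would apply Theorem \ref{thm:reconstrcution} directly to $\mc{M}$: the preservation conditions hold for the retraction $(\Psi^{\mc{M}}, \Phi^{\mc{M}})$ if and only if $\mc{M}$ is $(n,n-1)$-free. Chaining the two equivalences gives the result, namely that conditions $(1)$ and $(2)$ for $(\Phi, \Psi)$ hold if and only if $\mc{M}$ is $(n,n-1)$-free. This mirrors the argument of Corollary \ref{cor:weak-reconstruction}, with the Morsification now playing the role that the sequential matching's Morsification played there.

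I do not anticipate a genuine obstacle, as the statement is essentially a repackaging of Theorem \ref{thm:reconstrcution} through the Morsification. The one point requiring care is to confirm that passing to the Morsification alters only the \emph{base} of $\C$ and neither the complex $\C$ itself nor its Hodge-theoretic data, so that the target subspaces $\Ker \D_{n+1}^\dagger$ and $\Ker \D_{n-1}$ in conditions $(1)$ and $(2)$ are genuinely unchanged; this is exactly what lets the two sets of preservation conditions \emph{coincide} rather than merely being analogous, and it is what makes the direct appeal to Theorem \ref{thm:reconstrcution} legitimate.
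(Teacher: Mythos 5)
Your argument is correct and matches the paper's intended route: the paper states this corollary as an immediate consequence of Theorem \ref{thm:reconstrcution} combined with the Morsification machinery, which is exactly what you do via Theorem \ref{morsification} and Equation \ref{reconstruction_equality}. Your closing remark that Morsification changes only the base of $\C$, leaving the Hodge-theoretic subspaces untouched, is precisely the point that makes the reduction legitimate.
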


\subsection{Sparsification for \texorpdfstring{$\boldsymbol{(n,n-1)}$}{(n,n-1)}-free Matchings}\label{subsec:sparsification}

In the previous section, we showed how a signal's projection onto each Hodge component is related to that of its reconstruction. In addition, one would like to know how the reconstructed signal sits in the complex with respect to the base on which the Morse matching is constructed. 

In this section we will show that, for a $(n,n-1)$-free (sequential) Morse matching, the image of $\Phi_n \Psi_n$ is supported only on the critical cells $M_n^0$ of $I_n$. Intuitively, applying $\Phi_n \Psi_n$ can be thought of as a form of sparsification which preserves one of either cycles or cocycles (Theorem \ref{thm:reconstrcution}).

\begin{lemma} \label{inclusion_lemma}
    Let $M$ be an $(n,n-1)$-free matching of an orthogonally based finite-type chain complex $(\C,I)$ of real inner product spaces. Then
    \begin{enumerate}
        \item $\Phi_n : \C_n^M \to \C$ and
        \item $\Psi_{n-1}^\dagger : \C_{n-1}^M \to \C$
    \end{enumerate}
    are subspace inclusions and, thus, isometries.
\end{lemma}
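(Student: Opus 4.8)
The plan is to show that each of the two maps is literally the canonical inclusion of the direct summand of $\C$ spanned by the critical cells, from which the isometry property is immediate: since $I$ is an orthogonal base, $\C_k^M = \bigoplus_{\alpha \in M^0 \cap I_k} C_\alpha$ is an orthogonal direct summand of $\C_k$, and the inclusion of a subspace carrying the restricted inner product is automatically an isometry. Both reductions rest on a single combinatorial observation about the graph $\mc{G}(\C)^M$, which I would isolate first: because $M$ is $(n,n-1)$-free there are no reversed matching edges running from an $(n-1)$-cell up to an $n$-cell, and hence no directed path in $\mc{G}(\C)^M$ that begins in degree $n-1$ can ever reach degree $n$.

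For part $(1)$ I would use the closed form $\Phi(x) = \sum_{\beta \in I_n} \Gamma_{\beta,\alpha}(x)$ from Theorem \ref{main_lemma}, applied to a critical cell $\alpha \in M^0 \cap I_n$ and $x \in C_\alpha$. It suffices to prove $\Gamma_{\beta,\alpha} = 0$ for every $\beta \in I_n$ with $\beta \neq \alpha$, so that only the trivial path survives and $\Phi(x) = \Gamma_{\alpha,\alpha}(x) = x$. A directed path from $\alpha$ to such a $\beta$ both starts and ends in degree $n$; since $\alpha$ is critical, no matching edge is adjacent to it, so its first edge is an unreversed edge of $\mc{G}(\C)$ dropping to degree $n-1$, and to return to degree $n$ the path must take an upward step from degree $n-1$ to degree $n$. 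The observation above forbids this, so there are no nontrivial such paths. Hence $\Phi_n$ restricts to the identity on each critical $C_\alpha$ and is the inclusion $\C_n^M \hookrightarrow \C_n$.

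For part $(2)$ I would first express $\Psi_{n-1}^\dagger$ on a critical cell using the closed form of $\Psi_{n-1}$ together with orthogonality of the base. For $\gamma \in M^0 \cap I_{n-1}$, $y \in C_\gamma$, and arbitrary $x \in C_\alpha$ with $\alpha \in I_{n-1}$, we have $\langle \Psi_{n-1}^\dagger y, x\rangle = \langle y, \Psi_{n-1} x\rangle = \langle y, \sum_{\beta \in M^0 \cap I_{n-1}} \Gamma_{\beta,\alpha}(x)\rangle$; orthogonality of the base and $y \in C_\gamma$ leave only the $\beta = \gamma$ term, giving $\langle \Psi_{n-1}^\dagger y, x\rangle = \langle \Gamma_{\gamma,\alpha}^\dagger y, x\rangle$. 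Thus the $C_\alpha$-component of $\Psi_{n-1}^\dagger y$ is $\Gamma_{\gamma,\alpha}^\dagger y$, and it suffices to show $\Gamma_{\gamma,\alpha} = 0$ for $\alpha \neq \gamma$. A path from $\alpha$ to the critical cell $\gamma$ must arrive by an unreversed edge (no matching edge is adjacent to $\gamma$), so its penultimate vertex lies in degree $n$; but a path starting in degree $n-1$ cannot reach degree $n$, by the same observation. Hence $\Gamma_{\gamma,\alpha} = 0$ for $\alpha \neq \gamma$, and $\Psi_{n-1}^\dagger$ restricts to the identity on each critical $C_\gamma$, making it the inclusion $\C_{n-1}^M \hookrightarrow \C_{n-1}$.

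Being subspace inclusions that send the orthogonal summand $\C_k^M$ identically into $\C_k$, both maps preserve the inner product and are therefore isometries. The one genuinely load-bearing step is the path–degree argument in $\mc{G}(\C)^M$; everything else — the closed forms from Theorem \ref{main_lemma}, the orthogonality bookkeeping in the adjoint computation, and the passage from subspace inclusion to isometry — is routine. I expect the main subtlety to lie in part $(2)$, where one must track that criticality of $\gamma$ forces the final edge of any incoming path to descend from degree $n$, and that the upward edge required to supply such a degree-$n$ vertex is exactly what $(n,n-1)$-freeness rules out.
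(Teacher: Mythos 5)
Your proof is correct and follows essentially the same route as the paper: the same path-degree argument in $\mc{G}(\C)^M$ (a path leaving a critical $n$-cell must descend and, by $(n,n-1)$-freeness, cannot return to degree $n$; dually for paths into a critical $(n-1)$-cell) shows that only trivial paths contribute, and orthogonality of the base is what turns the resulting categorical projection/inclusion statements into the isometry claim. The only cosmetic difference is that you compute $\Psi_{n-1}^\dagger$ componentwise via the inner product, whereas the paper first identifies $\Psi_{n-1}$ as the projection $\pi_{\C^M}$ and then invokes Lemma \ref{adjoint_orthogonal_projection}.
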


\begin{proof}
    By Theorem \ref{main_lemma}
    $$ \Phi_n = \sum_{\alpha \in M_n^0} \sum_{\beta \in I_n} \Gamma_{\beta,\alpha}.$$ A path in $\mc{G}(\C)^M$ starting at an $n$-dimensional critical cell must first step down a dimension. Since $M$ is $(n,n-1)$-free, it cannot return to dimension $n$. This shows that the only paths starting at critical cells in dimension $n$ are trivial and hence
    $$ \Phi_n(x) = \sum_{\beta \in I_n} \Gamma_{\beta,\alpha}(x) = x $$
    for all $x \in \C_\alpha$, $\alpha \in M_n^0$.
    
    For point (2), recall that
    $$ \Psi_{n-1} = \sum_{\alpha \in M_{n-1}^0} \sum_{\beta \in I_{n-1}} \Gamma_{\alpha,\beta}.$$ When $\alpha \in \M_{n-1}^0$, all non-trivial paths in $\mc{G}(\C)^M$ from $\beta \in I_{n-1}$ to $\alpha$ must pass through dimension $n$. However, this is impossible since $M$ is $(n,n-1)$-free, implying all paths out of critical cells in dimension $(n-1)$ to cells in dimension $(n-1)$ are trivial and $\sum_{\beta \in I_{n-1}} \Gamma_{\alpha,\beta} = \pi_{\alpha}$. This yields
    $$ \Psi_{n-1} = \sum_{\alpha \in M_{n-1}^0} \pi_\alpha =  \pi_{\C^M}.$$ 
    
    According to Lemma \ref{adjoint_orthogonal_projection}, the inclusion $i : \C^M \to \C$ is the adjoint of the \textit{orthogonal} projection $\mathrm{Proj}_{\C^M}$, and is not necessarily the same as the categorical projection $\pi_{\C^M}$. However, the condition that the base $I$ is orthogonal, implies that $\C^M$ is is indeed orthogonal to $\C/\C^M$, and that $\Psi_{n-1}^\dag$ is the inclusion map $i : \C^M \hookrightarrow \C$ as required.
\end{proof}

\begin{remark}
    The condition that the base is orthogonal is also important for having a discrete Morse theoretic interpretation of the adjoint in terms of backwards flow within the Morse graph $\mc{G}(\C)^M$. We explain this perspective in detail in  Appendix \ref{adjoint_retraction_appendix}.
\end{remark}

Given that the composition of a sequence of inclusions of sub-spaces is again an inclusion, Lemma \ref{inclusion_lemma} holds equally well for $(n,n-1)$-free \textit{sequential} Morse matchings. 

\begin{corollary}[Sparsification]\label{lem:sparsification}
    Let $\M$ be an $(n,n-1)$-free sequential Morse matching of an orthogonally based chain complex $(\C,I)$. Then
    \begin{enumerate}
        \item $$\Phi_n^{\M} \Psi_n^{\M}(s) \in \bigoplus_{\alpha \in M^0 \cap I_n} C_\alpha \text{ for all } s \in \C_n$$
        \item $$\Psi^{\M\dagger}_{n-1} \Phi^{\M\dagger}_{n-1}(s) = \bigoplus_{\beta \in M^0 \cap I_{n-1}} C_\beta \text{ for all } s \in \C_{n-1}.$$
    \end{enumerate}
\end{corollary}

\begin{proof}
    By definition we know that $$\Psi_n^{\M}(s) \in \bigoplus_{\alpha \in \M^0 \cap I_n} C_\alpha = \C_n^{\M} \hspace{1em} \text{and} \hspace{1em} \Phi^{\M\dag_{n-1}}(s) \in  \bigoplus_{\beta \in \M^0 \cap I_{n-1}} C_\beta = \C_{n-1}^{\M}.$$ The result then follows from Lemma \ref{inclusion_lemma}, which implies that both $\Phi_{n}^{\M}$ and $\Psi_{n-1}^{\M\dagger}$ are compositions of subspace inclusions.
\end{proof}

\begin{example} In this example we consider the based chain complex $\C$ associated to the cell complex $\mc{X}$ in Figure~\ref{fig:cartoon-up-collapses}-A. We work with the standard basis generated by the $n$-cells and the standard boundary operator $\D_*$. The signal $s\in \C_1$ is obtained by randomly sampling from $[0,1]$. We consider the $(1,0)$-free matching $M$ in Figure~\ref{fig:cartoon-up-collapses}-C, where there are two $1$-cells are paired with two $2$-cells, denoted by the arrows. All the other cells are critical. 

In Figure~\ref{fig:cartoon-up-collapses}-A we show how the signal $s$ is transformed by the maps $\Phi^M$ and $\Psi^M$ induced by the $(1,0)$-free matching $M$. The absolute value of the reconstruction error, $| s-\Phi^M\Psi^M |$ is shown in Figure~\ref{fig:optimal-up-collapses}-B. As proved in Theorem~\ref{thm:reconstrcution}, we observe in  Figure~\ref{fig:optimal-up-collapses}-D that the reconstructed signal $\Phi^M\Psi^Ms$ is perfectly preserved on $\Ker\D_1=\Ker\Delta_1 \oplus \Ima \D_1^{\dagger}$, and all changes in the reconstructed signal are contained in $\Ima\D_2$. Note that $\Phi_1^M \Psi_1^Ms$ is supported only on the critical $1$-cells as proved in Lemmas \ref{lem:sparsification} and \ref{inclusion_lemma}.

\begin{figure}[!ht]
\vspace{2pt}
\begin{center}
\includegraphics[width=0.90\textwidth]{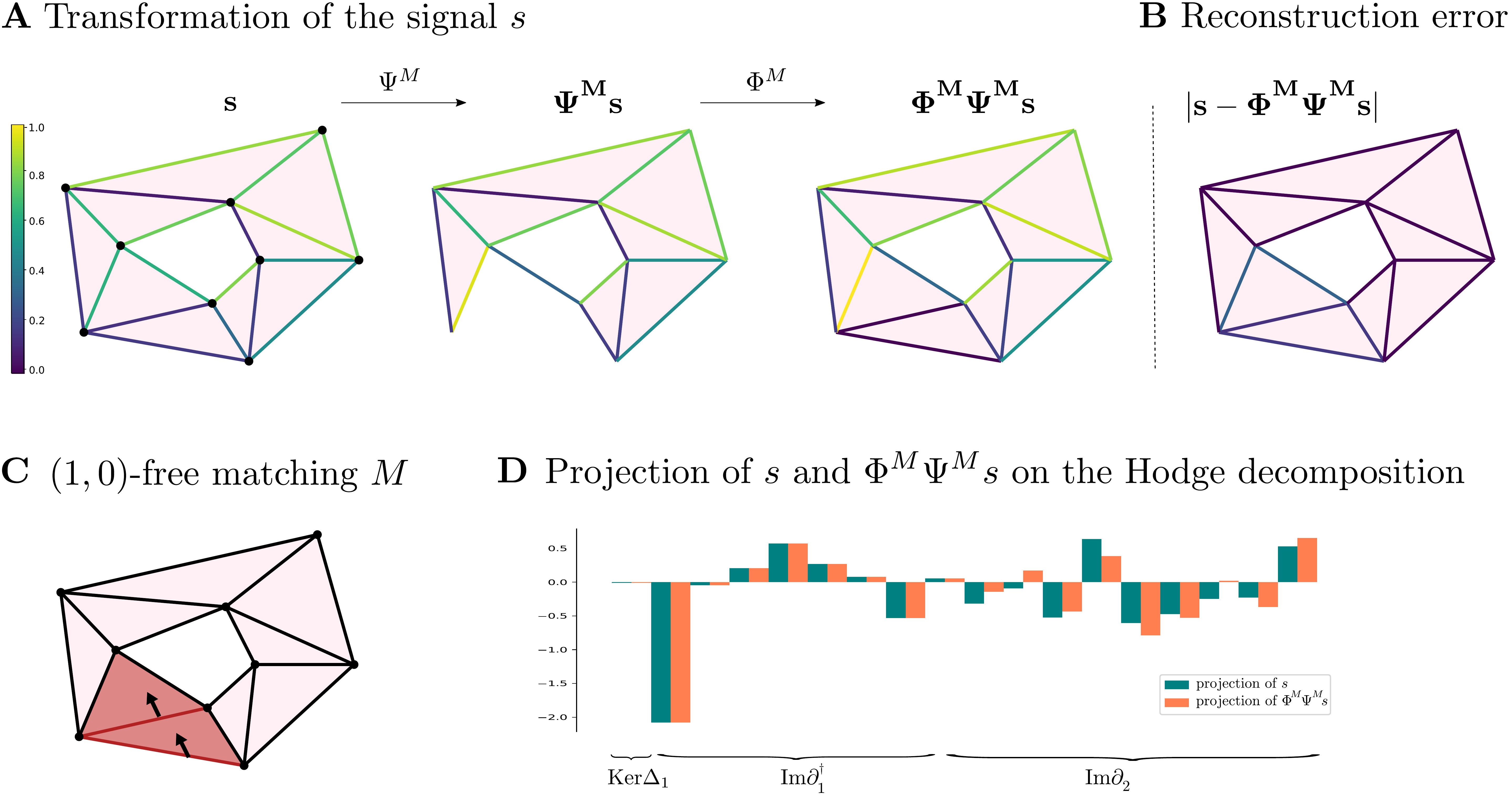}
  \caption{The life-cycle and reconstruction error of a signal $s \in \C$ in the standard basis of a simplicial complex under the maps associated to a Morse matching.}

\label{fig:cartoon-up-collapses}
\end{center}
\vspace{-.25in}
\end{figure}
\end{example}

\section{Algorithms and Experiments}\label{sec:algo}
The goal of this section is to reduce a based complex $(\C,I)$ together with a signal $s \in \C$ (or set of signals $\mc{S} \subset \C$) via a sequential Morse matching while trying to minimize the norm of the topological reconstruction error. 

We propose the following procedure to iterativly reduce a based chain complex $(\C,I)$ with signal $s$ via a sequential Morse matching. The method is inspired by the classical reduction pair algorithm described in \cite{kaczynski2006computational,kaczynski1998homology} but differs in the optimization step in (1).
\begin{enumerate}
    \item If $\D\neq0$, select a single pairing $\alpha \to \beta$ in $(\C,\D)$ minimizing $\norm{ s - \Phi \Psi s}$.
    \item Reduce $\C$ to $\C^M$ and repeat with $\C = \C^M$ and $\D=\D^{C^M}$.
\end{enumerate}
Note that this procedure differs as well from that of Nanda et al.\ which, in the context of both persistent homology \cite{vidit} and cellular sheaves \cite{Curry2016}, requires an actual Morse matching.
The details of the algorithm are provided in Section \ref{sec:optimal_pairig} (see Algorithm~\ref{alg:up} and Algorithm~\ref{alg:k-up}), where we also show that their computational complexity is linear in the number of $(n+1)$-cells. 
In Section \ref{sec:loss} we discuss the behaviour of the norm of the topological reconstruction error when performing this type of iterated reduction. In Section \ref{sec:convergence} we prove that such an algorithm converges to a based chain complex with the minimal number of critical cells. Finally, in Section \ref{sec:experiments} we provide experiments on synthetic data.
\begin{remark}
Since in most of the applications $\dim C_\alpha = 1$ for all $\alpha \in I$, we will work with this assumption throughout the following sections. Thus, without loss of generality, we will refer to the elements of $I_n$ as a basis of $\C_n$ and denote $\D_{\beta,\alpha}=[\alpha:\beta]$ (see Example \ref{ex:dim1-complex} for more details).
\end{remark}

\subsection{Algorithms for Optimal (sequential) Morse Matchings}\label{sec:optimal_pairig}

For a pair of chain maps
\begin{center}
    \begin{tikzcd}
        \mathbf{D} \ar[r, shift right, "\Phi"'] & \C \ar[l, shift right, "\Psi"']
    \end{tikzcd}
    \end{center}
between based chain complex with inner product on each $\C_n$ and $\mathbf{D}_n$, and a signal $s \in \C_n$, define the \textit{topological loss} of the maps $(\Phi, \Psi)$ over $s$ to be the norm of the topological reconstruction error
\begin{equation}\label{eq:topo_loss}
\mc{L}_s(\Psi,\Phi) = \langle s - \Phi \Psi s, s - \Phi \Psi s \rangle^{1/2}_{\C_n}=\norm{ s - \Phi \Psi s}_{\C_n}.
\end{equation}
For a finite subset $\mc{S} \subset \C_n$, the loss is defined to be the sum
$$\mc{L}_\mc{S}(\Psi,\Phi) = \sum_{s \in \mc{S} } \mc{L}_s(\Psi,\Phi)$$
of the individual losses. The loss of a single collapse can be given a closed form by using Theorem \ref{main_lemma}, in the case of a deformation retract associated to a Morse matching.

Specifically, suppose we have a single $(n+1,n)$-pairing $\alpha \to \beta$. Theorem \ref{main_lemma} implies that the homotopy $h$ maps $\beta$ to $-\frac{1}{[\alpha:\beta]}\alpha$ and is zero elsewhere. For a signal $s \in \C_{n}$, using the equations developed in Example \ref{ex:maps}, we have
\begin{equation}\label{eq:topolo_loss_compact} 
    \mc{L}_s(\Psi, \Phi) = \lVert (1 - \Phi\Psi)s \rVert_{\C_n} = \big\lVert  \D_n h_{n}  s \rVert_{\C_n} = \Bigg\lVert \dfrac{s_\beta }{[\alpha : \beta]}\cdot \D_{n+1}(\alpha)  \Bigg\rVert_{\C_n}
\end{equation}
where $s_\beta$ is the component of $s$ on basis element $\beta$. Similarly, for a signal $s \in \C_{n+1}$ we have a \emph{dual} topological loss
\begin{equation}
\mc{L}_s(\Phi^\dag, \Psi^\dag) = \lVert (1 - \Psi^\dagger \Phi^\dag)s \rVert_{n+1} = \lVert \D_{n+1}^\dagger h_{n}^\dagger  s \lVert_{n}
\end{equation}
If $I$ is an orthogonal basis for $\C$, Theorem \ref{adjoint_main_lemma} implies that we can write this loss as
\begin{equation*}
\mc{L}_s(\Phi^\dag, \Psi^\dag) = \Bigg\lVert s_\alpha \dfrac{\D_{n+1}^\dag(\beta)}{[\alpha : \beta] } \Bigg\rVert_{\C_{n+1}}
\end{equation*}

Note that to write a compact form for Equation (\ref{eq:topo_loss}), in case $M$ is not a single Morse matching, one needs to sum over all possible non-trivial paths in Theorem \ref{main_lemma}. Therefore finding the matching $M$ minimizing this norm would be computationally expensive, if not infeasible. On the other hand, it is not hard to find the single $(n+1,n)$-pairing $\alpha \to \beta$ minimizing the topological loss in Equation (\ref{eq:topolo_loss_compact}). Therefore, as a first approach towards finding an approximate solution of the problem, we begin by studying optimal matchings by restricting to iterated single pairings. 
\begin{remark}\label{rmk:dual-loss}
Naturally, one can ask the same questions about finding the optimal pairing minimizing the topological loss for $\Psi^\dagger\Phi^\dagger s -s$. Given the duality of the problem, we will present algorithms and experiments only for $\Phi\Psi s -s$. The algorithms and computations for the dual topological loss can be found by dualizing the chain and boundary maps.
\end{remark}
 Given a finite-type based chain complex $(\C,I)$ of real inner product spaces and a signal $s$ on the $n$-cells, our goal is now to find the the $(n+1,n)$-pairing $\alpha \to \beta$ minimizing the topological loss in Equation (\ref{eq:topolo_loss_compact}).
Computing the minimum and its arguments for a single pair boils down to storing for each $(n+1)$-cell $\tau$ in the basis the face $\sigma$ where the quantity $$\frac{|s_{\sigma}|}{|[\tau:\sigma]|}\norm{\D_{n+1}\tau}_n$$ is minimal, and choosing among all the ($n+1$)-cells the one realizing the minimum of $\mc{L}_s$.

\begin{example}\label{rmk:min-signal}
Consider the based chain complex associated to a simplicial complex $\mc{X}$ with basis induced by its cells and $\D_*$ the standard boundary operator. Let $s$ be a signal on the $n$-cells. The minimum of the reconstruction loss $\mc{L}_s$ in Equation (\ref{eq:topolo_loss_compact}) is then realized on the $n$-cell $\beta$, where $|s_{\beta}|$ is minimum, paired with any of its cofaces $\alpha$. Note that the minimum and its argument might not be unique.

\end{example}

Following the idea above, Algorithm~\ref{alg:up} returns a single $(n+1,n)$-pairing $\alpha \to \beta$ that minimizes the topological loss for a given based chain complex $(\C,I)$ and signal $s$.

\begin{algorithm}
\footnotesize{
\caption{Perform a single optimal pairing}\label{alg:up}
\textbf{Input} {A based chain complex $\C$ with basis $I$, a signal on $\C_n$, $\D_{n+1}$, the non-zero $n+1$-boundary.}
\textbf{Output} {A a single $(n+1,n)$-pairing $\alpha \to \beta$ which minimize the topological loss.}
\begin{algorithmic}[1]

\Function{OptimalPairing}{$\C$, $I$, \var{signal}, $\D_{n+1}$}
\vspace{10pt}
\For{each $n+1$-cell $\tau$ in $I_{n+1}$} 
\State {\var{OptCol}[$\tau$]=0}
\Comment{\var{OptCol} keeps track of the face which realizes the optimal collapse on $\tau$}
\EndFor
\vspace{10pt}
\For{each $n+1$-cell $\tau$ in $I_{n+1}$} 
\State {\var{ValOptCol}[$\tau$]=$\infty$} \hspace{0.1pt} 
\Comment{\var{ValOptCol} keeps track of the value of the optimal collapse on $\tau$}
\EndFor
\vspace{10pt}  
\For{each $n+1$-cell $\tau$ in $I_{n+1}$} 
\For{each face $\xi$ of $\tau$ in $\mc{F}_{\tau}$} 
\State  $x \gets \dfrac{|\text{\var{signal}}[\xi]| }{|[\tau: \xi]| }\norm{\D_{n+1} (\tau) }_{\C_n}$ 
\State \var{ValOptCol}[$\tau$]$\gets$minimum({$x,\text{\var{ValOptCol}}[\tau]$})
\EndFor
\State $\sigma\gets$ 
random$\left(\argmin\limits_{\xi \in \mc{F}_{\tau}}\left(\dfrac{|\text{\var{signal}}[\xi]| }{|[\tau: \xi]| }\norm{\D_{n+1} (\tau) }_{\C_n}=\text{\var{ValOptCol}}[\tau]\right)\right)$

\Comment{$\sigma$ is randomly chosen among the faces of $\tau$ which have minimal reconstruction loss}
\State \var{OptCol}[$\tau$]$\gets \sigma$
\EndFor
\State \var{TotalMin}$\gets$minimum({\var{ValOptCol}})
\Comment The value \var{TotalMin} is the minimum reconstruction loss.
\State $\alpha \gets$ random({$\argmin\limits_{\tau \in I_{n+1}}(\text{\var{ValOptCol}=\var{TotalMin}})$})
\Comment{The $n+1$ cell $\alpha$ to collapse is randomly chosen among the $n+1$ cells where the reconstruction loss is minimal.}
\State $D \gets $ \var{OptCol}[$\alpha$]
\Comment{The $n$ cell $\beta$ to collapse is the face of $\tau$ obtaining minimal reconstruction loss.}
\\ \
\Return{$(\alpha,\beta)$}
\EndFunction

\end{algorithmic}
}
\end{algorithm}
The computational complexity of Algorithm~\ref{alg:up} is $O(pc^2)+O(p)$, where $p=\dim \C_{n+1}$ and $c=\max_{\tau\in I_{n+1}} |\D_{n+1}\tau|$. The first term follows from the fact that we need to iterate through all the $(n+1)$-cells and their faces, computing the minimum of lists of size at most $c$. The second summand follows from the fact that the final step of the algorithm requires computations of the minimum of a list of size at most $c$. Since the first summand dominates the second one, the computational complexity of Algorithm~\ref{alg:up} is $O(pc^2)$. We assume that in most of the computations we are dealing with sparse based chain complexes, i.\ e.\ based chain complexes in which the number of $n$-cells in the boundary of an $(n+1)$-cell is at most a constant $ c \ll p$. In this case the computational complexity of Algorithm~\ref{alg:up} is $O(p)$.

In practice, one would like to further reduce the size of a based chain complex. In Algorithm~\ref{alg:k-up} we provide a way to perform a sequence of single optimal collapses.  For a based chain complex $\C$ and a signal $s$, the algorithm computes at each iteration a single optimal pairing $(\alpha,\beta)$ and it updates $(\C,\D)$ to $(\C^M, \D_{C_M})$ and the signal $s$ to $\Psi^M s$. 
\begin{algorithm}
\footnotesize{
\caption{Perform $k$ single optimal pairings}\label{alg:k-up}
\textbf{Input} {A based chain complex $\C$ with basis $I$, a signal on $\C_n$, $\D_{n+1}$ the non-zero $(n+1)$-boundary and parameter $k$ of the number of single optimal collapses to perform.}
\ \\
\textbf{Output}{A based chain complex $\C^M$ with basis $I^M \subseteq I$ and its boundary $\D_{C_M}$ obtained by iteratively computing $k$ optimal pairings starting from $\C$.}
\begin{algorithmic}[1]
 \Function{k-OptimalPairings}{$\C$, $I$, \var{signal}, $\D_{n+1}$, k}
\State $i \gets 1$

\While{$ i \leq k$}
\State $(\alpha, \beta) \gets $\Call{OptimalUpCollapse}{$\C$, $I$, \var{signal}, $\D_{n+1}$}
\State $(\C, \D,I) \gets (\C^M, \D_{C_M},I_M)$
\State \var{signal}$ \gets \Psi( $\var{signal})
\State $i \gets i + 1$
\EndWhile
\\ \
\Return $\C$ , $\D$
\EndFunction
\end{algorithmic}}
\end{algorithm}

In fact, Algorithm~\ref{alg:k-up} consists of the classical reduction pair algorithm proposed in \cite{kaczynski2006computational,kaczynski1998homology} with the additional step of the loss minimization. If applied only to a $(n,n-1)$-free sequential Morse matching, Algorithm \ref{alg:k-up} will converge to a based chain complex with given dimensions, as we prove in Proposition \ref{free_convergence}. Otherwise, if applied to cells of every size, it allows us to reduce a chain complex up to a minimal number of critical $n$-cells, as proved in \cite{kaczynski1998homology}. We state again this result in Section \ref{sec:convergence}. At the same time, the algorithm constructs a $(n,n-1)$-free sequential Morse matching, therefore the original signal is perfectly reconstructed on part of the Hodge decomposition, as proved in Theorem~\ref{thm:reconstrcution}. Finally, a further justification for the choice of this iterative algorithm, is that the loss on the original complex is bounded by the sum of the losses in the iterative step. We further discuss this in the next section.

\subsection{Conditional Loss}\label{sec:loss}
The computational advantages outlined above are dictated by the fact that Algorithm \ref{alg:k-up} iteratively searches for optimal pairings. One important detail to understand is then how the loss function interacts with such iterated reductions. For a diagram of chain maps

\begin{center}
    \begin{tikzcd}
        \mathbf{E} \ar[r, shift right, "\Phi'"'] & \mathbf{D}  \ar[l, shift right, "\Psi'"'] \ar[r, shift right, "\Phi"'] & \C \ar[l, shift right, "\Psi"']
    \end{tikzcd}
    \end{center}
and $s \in \C_n$, define the conditional loss to be
$$\mc{L}_s(\Psi', \Phi' \mid \Psi, \Phi) = \mc{L}_{\Psi(s)}(\Psi', \Phi') = \lVert \Psi s - \Phi' \Phi' \Psi s \rVert_{\mathbf{D}_n}. $$ In practice, we will generate a sequential Morse matching by taking a series of collapses and optimising the conditional loss at each step.

\begin{lemma}
    Let $C,D$, and $E$ be inner product spaces and suppose we have a diagram of linear maps
    \begin{center}
        \begin{tikzcd}
            E \ar[r, shift right, "\phi'"'] & D \ar[l, shift right, "\psi'"'] \ar[r, shift right, "\phi"'] & C \ar[l, "\psi"', shift right]
        \end{tikzcd}
    \end{center}
    where $\phi$ is an isometry. Then for all $s \in C$ we have
    \begin{equation*}
        \lVert (1-\phi \phi' \psi' \psi)s \rVert_C \leq \lVert (1-\phi \psi)s \rVert_C + \lVert (1-\phi' \psi') \psi(s) \rVert_D.
    \end{equation*}
\end{lemma}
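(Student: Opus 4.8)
The plan is to reduce everything to the ordinary triangle inequality for norms together with the defining property of an isometry. The key algebraic observation is that the operator $1-\phi\phi'\psi'\psi$ telescopes through the intermediate map $\phi\psi$. Concretely, for any $s \in C$ I would insert $\pm\,\phi\psi s$ and write
\begin{equation*}
(1-\phi\phi'\psi'\psi)s = (1-\phi\psi)s + \bigl(\phi\psi - \phi\phi'\psi'\psi\bigr)s.
\end{equation*}
The second summand factors on the left through $\phi$, since $\phi\psi - \phi\phi'\psi'\psi = \phi\,(1-\phi'\psi')\,\psi$, so that
\begin{equation*}
(1-\phi\phi'\psi'\psi)s = (1-\phi\psi)s + \phi\,(1-\phi'\psi')\,\psi s.
\end{equation*}

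Next I would apply the triangle inequality in $\|\cdot\|_C$ to this sum, obtaining
\begin{equation*}
\lVert(1-\phi\phi'\psi'\psi)s\rVert_C \le \lVert(1-\phi\psi)s\rVert_C + \bigl\lVert \phi\,(1-\phi'\psi')\,\psi s \bigr\rVert_C.
\end{equation*}
The final step uses the hypothesis that $\phi$ is an isometry: since $\lVert \phi(x)\rVert_C = \lVert x\rVert_D$ for every $x \in D$, applying this with $x = (1-\phi'\psi')\psi s \in D$ converts the last term into $\lVert(1-\phi'\psi')\psi s\rVert_D$, which is exactly the second term on the right-hand side of the claimed inequality. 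Combining the two displays yields the statement.

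There is no real obstacle here; the content is entirely in the telescoping decomposition and the role of the isometry hypothesis, which is precisely what permits moving the norm of the middle term from $C$ down to $D$ without distortion. The only point requiring a little care is bookkeeping the domains and codomains so that the factorisation $\phi(1-\phi'\psi')\psi$ is well typed and the isometry of $\phi$ is applied to an element genuinely living in $D$. In the intended application $\phi$ will be one of the reconstruction maps $\Phi$ arising from an $(n,n-1)$-free matching, which is an isometry by Lemma~\ref{inclusion_lemma}, so the hypothesis is automatically satisfied and the lemma specialises to the desired bound on the conditional loss.
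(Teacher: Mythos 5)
Your proposal is correct and follows exactly the same route as the paper: the telescoping decomposition $(1-\phi\phi'\psi'\psi)s = (1-\phi\psi)s + \phi(1-\phi'\psi')\psi(s)$, followed by the triangle inequality and the isometry of $\phi$ to move the norm of the second term from $C$ to $D$. Nothing is missing.
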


\begin{proof}
    Using the triangle inequality and the fact that $\phi$ is an isometry, we have
    \begin{align*}
        \lVert (1-\phi \phi' \psi' \psi) s \rVert_C & = \lVert (1-\phi \psi)s + \phi (1 - \phi' \psi')\psi(s) \rVert_C \\
        & \leq \lVert (1-\phi \psi)s \rVert_C + \lVert \phi (1 - \phi' \psi')\psi(s) \rVert_C \\
        & = \lVert (1-\phi \psi)s \rVert_C + \lVert (1 - \phi' \psi')\psi(s) \rVert_D
    \end{align*}
    as required.
\end{proof}

The following corollary justifies the approach of minimizing the conditional loss at each step. It states that the loss on the original complex will be bounded by the sum of the conditional losses. Note that the same result and proof also work for the adjoint case where $s \in \C_{n-1}$, as long as the complex is orthogonally based.

\begin{corollary}
    Suppose we have a diagram of chain maps
    \begin{center}
    \begin{tikzcd}
        \mathbf{E} \ar[r, shift right, "\Phi'"'] & \mathbf{D}  \ar[l, shift right, "\Psi'"'] \ar[r, shift right, "\Phi"'] & \C \ar[l, shift right, "\Psi"']
    \end{tikzcd}
    \end{center}
    where each step arises from an $(n,n-1)$-free Morse matching. Then for all $s \in \C_n$
    $$ \mc{L}_s(\Psi'\Psi,\Phi\Phi') \leq \mc{L}_s(\Psi,\Psi) + \mc{L}_s(\Psi',\Phi' \mid \Psi,\Phi)$$
\end{corollary}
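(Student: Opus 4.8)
The plan is to reduce the corollary to a single application of the preceding lemma, carried out degree-wise at level $n$. First I would unfold the three losses at degree $n$. The composed loss is $\mc{L}_s(\Psi'\Psi, \Phi\Phi') = \norm{(1 - \Phi_n\Phi'_n\Psi'_n\Psi_n)s}_{\C_n}$; the first term on the right is $\mc{L}_s(\Psi,\Phi) = \norm{(1 - \Phi_n\Psi_n)s}_{\C_n}$ (I read the ``$\mc{L}_s(\Psi,\Psi)$'' in the statement as a typo for $\mc{L}_s(\Psi,\Phi)$); and the conditional loss is, by its definition, $\mc{L}_s(\Psi',\Phi' \mid \Psi,\Phi) = \mc{L}_{\Psi s}(\Psi',\Phi') = \norm{(1 - \Phi'_n\Psi'_n)\Psi_n s}_{\mathbf{D}_n}$. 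With this bookkeeping the desired inequality is verbatim the conclusion of the preceding lemma, applied with $C = \C_n$, $D = \mathbf{D}_n$, $E = \mathbf{E}_n$ and the linear maps $\phi = \Phi_n$, $\psi = \Psi_n$, $\phi' = \Phi'_n$, $\psi' = \Psi'_n$.

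The only hypothesis of that lemma requiring verification is that $\phi = \Phi_n$ be an isometry; the rest of the argument is the triangle inequality already packaged inside the lemma. This is exactly where the $(n,n-1)$-free assumption on the outer step is used: since $\Phi : \mathbf{D} \to \C$ arises from an $(n,n-1)$-free Morse matching, Lemma \ref{inclusion_lemma}(1) shows that $\Phi_n : \mathbf{D}_n \to \C_n$ is a subspace inclusion, hence an isometry. I would note that only the \emph{outer} step's freeness is strictly needed for this direction, because the lemma imposes the isometry condition on $\phi$ alone and places no condition on $\phi'$; assuming both steps are $(n,n-1)$-free is merely a clean sufficient hypothesis. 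Feeding this isometry into the preceding lemma and re-folding the norms into loss notation then yields $\mc{L}_s(\Psi'\Psi, \Phi\Phi') \leq \mc{L}_s(\Psi,\Phi) + \mc{L}_s(\Psi', \Phi' \mid \Psi, \Phi)$, as claimed.

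The main (indeed essentially the only) delicate point is the isometry of $\Phi_n$, which rests on Lemma \ref{inclusion_lemma} and, through it, on the orthogonality of the base and the fact that a $(n,n-1)$-free matching sends critical $n$-cells to themselves under $\Phi_n$. For the adjoint version announced after the statement, where $s \in \C_{n-1}$, the same scheme applies with $\Psi^\dagger_{n-1}$ playing the role of the isometry; there the genuinely necessary orthogonality hypothesis enters through Lemma \ref{inclusion_lemma}(2). I would therefore flag ``$\Phi_n$ (resp.\ $\Psi^\dagger_{n-1}$) is an isometry'' as the single load-bearing step and treat the rest as a mechanical substitution into the preceding lemma.
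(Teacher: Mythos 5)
Your proposal is correct and follows essentially the same route as the paper: the paper's proof likewise just invokes the isometry of $\Phi_n$ for $(n,n-1)$-free matchings (established in Lemma \ref{inclusion_lemma}) and feeds it into the preceding triangle-inequality lemma. Your added observations --- that the ``$\mc{L}_s(\Psi,\Psi)$'' in the statement is a typo for $\mc{L}_s(\Psi,\Phi)$, and that only the outer map $\Phi_n$ need be an isometry even though the paper asserts it for both $\Phi_n$ and $\Phi'_n$ --- are both accurate refinements rather than departures.
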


\begin{proof}
    In the Sparsification Lemma \ref{lem:sparsification}, we showed that taking $(n,n-1)$-matchings implied that $\Phi_n, \Phi'_n$ are isometries. The result then follows from applying the lemma above. 
\end{proof}
\subsection{Reduction Pairings and Convergence}\label{sec:convergence}

The following proposition ensures that the reduction pair algorithm proposed in \cite{kaczynski1998homology}, which is the foundation of Algorithm \ref{alg:k-up}, converges in a finite (and pre-determined) number of steps to the homology of $\C$. This advantage of being able to maximally reduce a based complex is in contrast with the well-studied NP-hard problem \cite{Joswig2006ComputingOM} of finding Morse matchings. In this section, we will prove an analogous result for $(n,n-1)$-free matchings.

\begin{theorem}[Kaczynski et al.\ \cite{kaczynski1998homology}]\label{prop:convergence}
    Let $(\C,I)$ be a finite-type based chain complex over $\R$, where $\dim C_\alpha = 1$ for all $\alpha \in I$. The iteration of the following procedure
    \begin{enumerate}
    \item If $\D\neq0$, select a single pairing $\alpha \to \beta$ in $(\C,\D)$.
    \item Reduce $\C$ to $\C^M$ and repeat with $\C = \C^M$ and $\D=\D_{C_M}$.
\end{enumerate}
    converges to the complex $\HH(\C)$ with $\D = 0$ after
    $$N = \dfrac{1}{2} \sum_n (\dim \C_n  - \dim \HH_n(\C))$$
    steps.
\end{theorem}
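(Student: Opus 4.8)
The plan is to track the single quantity $d(\C) = \sum_n \dim \C_n$, which equals the total number of cells since every $C_\alpha \cong \R$, and to establish three facts about each iteration: (i) a valid single pairing exists whenever $\D \neq 0$; (ii) performing it decreases $d$ by exactly $2$ while preserving homology; and (iii) the loop halts precisely when $\D = 0$, at which point the complex is its own homology. Granting these, the step count is forced: starting from $d(\C)$ and deleting two cells per step until reaching a complex of total dimension $\sum_n \dim \HH_n(\C)$, the number of steps must be $N = \frac{1}{2}\big(\sum_n \dim \C_n - \sum_n \dim \HH_n(\C)\big)$, independently of the choices made along the way.

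First I would check that a single edge is always an admissible Morse matching. If $\D \neq 0$ then some component $\D_{\beta,\alpha} : C_\alpha \to C_\beta$ is nonzero; as $C_\alpha \cong C_\beta \cong \R$, this component is multiplication by a nonzero scalar and hence an isomorphism, so conditions (1) and (2) of Definition \ref{def:morse-matching} hold for $M = \{\alpha \to \beta\}$. For condition (3), note that reversing the single edge produces in $\mc{G}(\C)^M$ exactly one ascending edge $\beta \to \alpha$ (raising dimension by one), while every other edge descends by one. A directed cycle meeting a fixed $I_k$ would need as many ascents as descents; using the unique ascending edge forces the walk up to $\dim \alpha$, after which only descents are available and $\beta$ can never be revisited. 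Hence no such cycle exists and the reachability relation on each $I_k$ is a partial order. Since the reduced complex $\C^M$ is indexed by $I \setminus \{\alpha,\beta\}$ with the same one-dimensional components, the hypothesis $\dim C_\alpha = 1$ is preserved and the argument repeats verbatim at every stage.

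Next I would invoke Theorem \ref{main_lemma}: each single pairing induces a deformation retract of $\C$ onto $\C^M$, so $\HH(\C^M) \cong \HH(\C)$ and homology is unchanged throughout the iteration, while $d$ drops by exactly $2$ because precisely the two cells $\alpha,\beta$ are removed. Finiteness of $d$ forces termination, and by construction the loop stops only when $\D = 0$; a complex with zero differential satisfies $\HH_n = \C_n$, so the terminal complex $\C^{\mathrm{final}}$ has $\dim \C_n^{\mathrm{final}} = \dim \HH_n(\C)$ in each degree. Equating the total dimension removed, $2N$, with $\sum_n\big(\dim \C_n - \dim \HH_n(\C)\big)$ yields the claimed formula (and incidentally shows that this difference is even). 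The only genuine subtlety — and hence the step I would write out most carefully — is the acyclicity verification above; everything else is bookkeeping resting on Theorem \ref{main_lemma}.
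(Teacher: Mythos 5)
Your proposal is correct. Note, however, that the paper itself gives no proof of this statement: it is imported verbatim from Kaczynski et al.\ \cite{kaczynski1998homology} and stated without argument, so there is no in-paper proof to compare against line by line. Your argument is the standard one and it is sound: the only point that genuinely needs care is the verification that a single reversed edge cannot create a directed cycle in $\mc{G}(\C)^M$, and your observation that the unique ascending edge $\beta \to \alpha$ can be traversed at most once (since from $\alpha$ one can only descend to $I_n \setminus \{\beta\}$ and then strictly downward, so $\beta$ is never revisited) settles condition (3) of Definition \ref{def:morse-matching}; the rest is the bookkeeping you describe, resting on Theorem \ref{main_lemma} for both the homology invariance and the fact that $\C^M$ retains exactly the critical cells with their one-dimensional components, so the hypothesis propagates. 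It is worth contrasting your elementary cell count with the route the paper takes for its analogous Proposition \ref{free_convergence}: there, because $(n,n-1)$-free matchings terminate at a complex that is \emph{not} determined by homology alone, the authors must track the dimensions of the individual Hodge summands via Lemmas \ref{image_prop} and \ref{eigen_collapse}. For the unrestricted statement you are proving, that finer accounting is unnecessary, and your coarser invariant $d(\C) = \sum_n \dim \C_n$ together with $\D = 0 \Rightarrow \HH_n = \C_n$ is exactly the right level of resolution.
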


  To prove a similar result for $(n,n-1)$-free matchings, we first prove two lemmas describing how the dimensions of the summands in the Hodge decomposition of $\C^M$ relate to those of $\C$ when $M$ is a single pairing. 

\begin{lemma} \label{image_prop}
    Let $M = (\alpha \to \beta)$ be an $(n+1,n)$-pairing of a based complex $(\C,I)$. Then
    $$\Ima \D_{n}^M = \Ima \D_{n}$$
\end{lemma}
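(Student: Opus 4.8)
The plan is to establish the two inclusions separately, with the reverse inclusion being the only substantive one. First I would pin down the Morse differential $\D_n^M$ for this specific pairing using the closed form in Example~\ref{ex:maps}. Since $M = (\alpha \to \beta)$ is an $(n+1,n)$-pairing, the only paired cells sit in degrees $n+1$ and $n$; hence $\C_{n-1}^M = \C_{n-1}$ and $\C_n^M = \bigoplus_{\sigma \in I_n \setminus \{\beta\}} C_\sigma$, so that $\C_n = \C_n^M \oplus C_\beta$. In the component formula $\D^{\C^M}_{\tau,\sigma} = \D_{\tau,\sigma} - \D_{\tau,\alpha}\D_{\beta,\alpha}^{-1}\D_{\beta,\sigma}$ the correction term contains the factor $\D_{\tau,\alpha}\colon C_\alpha \to C_\tau$ with $\dim\alpha = n+1$ and $\dim\tau = n-1$; since $\D$ connects only adjacent degrees, this factor vanishes. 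Thus $\D_n^M$ is simply the restriction of $\D_n$ to $\C_n^M$, which immediately gives $\Ima \D_n^M = \D_n(\C_n^M) \subseteq \Ima \D_n$.

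For the reverse inclusion I would use the decomposition $\Ima \D_n = \D_n(\C_n^M) + \D_n(C_\beta) = \Ima\D_n^M + \D_n(C_\beta)$, so that it suffices to show $\D_n(C_\beta) \subseteq \Ima \D_n^M$. The key move exploits the pairing hypothesis that $\D_{\beta,\alpha}\colon C_\alpha \to C_\beta$ is an isomorphism: given $y \in C_\beta$, set $x = \D_{\beta,\alpha}^{-1}(y) \in C_\alpha$. Writing the boundary of $x$ in components, $\D_{n+1}(x) = \sum_{\sigma \in I_n} \D_{\sigma,\alpha}(x)$, the $\beta$-component is exactly $y$, so $\D_{n+1}(x) = y + \sum_{\sigma \neq \beta} \D_{\sigma,\alpha}(x)$ with every remaining summand lying in $\C_n^M$. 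Applying $\D_n$ and invoking $\D_n \D_{n+1} = 0$ rearranges to $\D_n(y) = -\sum_{\sigma \neq \beta} \D_n\big(\D_{\sigma,\alpha}(x)\big)$; since $\D_n$ agrees with $\D_n^M$ on $\C_n^M$, each term on the right lies in $\Ima \D_n^M$, giving $\D_n(y) \in \Ima \D_n^M$ as desired. Combining the two inclusions yields the claim.

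The main conceptual obstacle is recognizing that for an $(n+1,n)$-pairing the Morse differential in degree $n$ degenerates into an honest restriction of $\D_n$ (because the relevant correction factor vanishes for degree reasons). Once that is seen, the forward inclusion is immediate, and the only content left is recovering the single discarded component $\D_n(\beta)$: this comes for free from the boundary relation $\D_n\D_{n+1}(x)=0$ together with the invertibility of $\D_{\beta,\alpha}$. There are no estimates or case analyses to manage, so beyond checking the vanishing-correction-term step carefully the argument is routine.
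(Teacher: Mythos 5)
Your proof is correct and follows essentially the same route as the paper's: identify $\D_n^M$ as the restriction of $\D_n$ to $\C_n^M$, reduce the problem to showing $\D_n(C_\beta) \subseteq \Ima \D_n^M$, and obtain this from $\D_n\D_{n+1}=0$ together with the invertibility of $\D_{\beta,\alpha}$. Your explicit degree-counting justification for why the correction term in the Morse differential vanishes is a detail the paper leaves implicit, but the argument is the same.
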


\begin{proof}
    Since no $(n-1)$-cells are deleted by $M$, $\C_{n-1} = \C_{n-1}^M$. The formulas in the background section in Example \ref{ex:maps} show that $
    \D_n^M = \restr{\D_n}{\C_n^M}$, implying that $\Ima \D^M_n + \D_n(C_\beta) = \Ima \D_n$. To prove the statement it then suffices to show that  $\D_n(C_\beta)$ is contained in $\Ima \D_n^M$. Using $\D_n \D_{n+1}=0$ and the fact that $\D_{\alpha,\beta}$ is an isomorphism, we then have that
    \begin{align*}
        0 = \D_n(\D_{n+1}(C_\alpha)) & = \D_n( \D_{\beta,\alpha} (C_\alpha) + \sum_{\tau \in I_n \setminus \beta} \D_{\tau,\alpha} (C_\alpha))\\
        \Rightarrow \D_n( C_\beta) & = - \D_n( \sum_{\tau \in I_n \setminus \beta} \D_{\tau,\alpha} (C_\alpha)) \subseteq \Ima \D_n^M.
    \end{align*}
    which proves the result.
\end{proof}

Note that while the images of both $\D_n^M$ and $\D_n$ agree, the eigendecomposition of their correspondent up- and down-Laplacians may not be related in a straightforward way. In other words, the combinatorial Laplacian eigenbases for $\C_{n-1}$ and $\C_{n-1}^M$ can be rather different, even though the corresponding summands of their Hodge decompositions have the same dimensions.

\begin{lemma} \label{eigen_collapse}
    Let $M = (\alpha \to \beta)$ be an $(n+1,n)$-pairing of a finite-type based complex $(\C,I)$ of real inner product spaces. Then
    \begin{equation} 
    \dim \Ima (\D_i^M)^\dagger = \dim \Ima \D^M_{i}  = \begin{cases} \dim \Ima \D_i - \dim C_\beta & i = n+1 \\
    \dim \Ima \D_i & \text{else}
    \end{cases}
    \end{equation}
\end{lemma}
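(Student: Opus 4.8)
The plan is to reduce the adjoint clause to a pure rank statement and then compute $\dim \Ima \D_i^M$ degree by degree, with essentially one non-trivial case. First I would observe that for any linear map $f$ between finite-dimensional inner product spaces $\Ima f^\dagger = (\Ker f)^\perp$, so $\dim \Ima f^\dagger = \dim \Ima f$; applied to $f = \D_i^M$ this disposes of the equality $\dim \Ima (\D_i^M)^\dagger = \dim \Ima \D_i^M$, and it remains only to track $\dim \Ima \D_i^M$. Inspecting the closed forms in Example \ref{ex:maps}, the correction term $\D_{\tau,\alpha}\D_{\beta,\alpha}^{-1}\D_{\beta,\sigma}$ is non-zero only for $\tau \in I_n$ and $\sigma \in I_{n+1}$, so the only genuinely altered boundary is $\D_{n+1}^M$; since the pairing also removes $\alpha$ from degree $n+1$ and $\beta$ from degree $n$, only the degrees $i \in \{n, n+1, n+2\}$ can exhibit a change in rank.

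For $i \notin \{n,n+1,n+2\}$ nothing is removed and the correction vanishes, so $\D_i^M = \D_i$ and the rank is unchanged. For $i = n$ the map $\D_n^M$ is the restriction of $\D_n$ to the critical $n$-cells, and I would simply cite Lemma \ref{image_prop}, which already gives $\Ima \D_n^M = \Ima \D_n$. For $i = n+2$ the map $\D_{n+2}^M$ equals $p \circ \D_{n+2}$, where $p$ is the projection killing $C_\alpha$; here I would show $\Ima \D_{n+2} \cap C_\alpha = 0$ by noting that any $\D_{n+2}(x) \in C_\alpha$ satisfies $\D_{n+1}\D_{n+2}(x) = 0$, while the $\beta$-component of $\D_{n+1}$ restricted to $C_\alpha$ is the isomorphism $\D_{\beta,\alpha}$, forcing $\D_{n+2}(x)=0$. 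Thus $p$ is injective on $\Ima \D_{n+2}$ and the rank is preserved.

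The heart of the argument is $i = n+1$. Splitting $\C_{n+1} = C_\alpha \oplus \C_{n+1}^M$ and $\C_n = C_\beta \oplus \C_n^M$, the operator $\D_{n+1}$ becomes a block matrix whose top-left block is $\D_{\beta,\alpha}$ — an isomorphism by condition (2) of the Morse matching, so in particular $\dim C_\alpha = \dim C_\beta$ — and whose Schur complement is exactly the Morse boundary $\D_{n+1}^M = \D_{\cdot,\cdot} - \D_{\cdot,\alpha}\D_{\beta,\alpha}^{-1}\D_{\beta,\cdot}$ of Example \ref{ex:maps}. The block-rank identity for a matrix with invertible top-left block then yields $\dim \Ima \D_{n+1} = \dim C_\beta + \dim \Ima \D_{n+1}^M$, which is the asserted drop by $\dim C_\beta$.

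I expect the $i=n+1$ case to be the main obstacle, since it requires recognising $\D_{n+1}^M$ as a Schur complement and invoking rank additivity cleanly — concretely, conjugating the block matrix by the two unipotent block-triangular matrices that block-diagonalise it into $\D_{\beta,\alpha} \oplus \D_{n+1}^M$. As a cross-check, and a fully self-contained alternative to the block computation, I could instead deduce this last case from homology invariance: Theorem \ref{main_lemma} makes the reduction a deformation retract, so $\dim \HH_i(\C^M) = \dim \HH_i(\C)$; feeding the relation $\dim \HH_n = \dim \C_n - \dim \Ima \D_n - \dim \Ima \D_{n+1}$ together with the already-proved equalities in degrees $i \neq n+1$ and $\dim \C_n^M = \dim \C_n - \dim C_\beta$ again forces $\dim \Ima \D_{n+1}^M = \dim \Ima \D_{n+1} - \dim C_\beta$.
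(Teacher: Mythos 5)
Your proof is correct, but it takes a genuinely different route from the paper's. The paper dispatches the whole lemma with a global dimension count: the deformation retract of Theorem \ref{main_lemma} forces $\dim\Ker\Delta_i^M = \dim\Ker\Delta_i$ for all $i$, Lemma \ref{image_prop} pins down $\dim\Ima(\D_n^M)^\dagger = \dim\Ima\D_n^\dagger$, and then the Hodge decomposition of $\C_n$ leaves $\Ima\D_{n+1}$ as the only summand that can absorb the drop $\dim\C_n - \dim\C_n^M = \dim C_\beta$; the remaining degrees follow because "all the change has been accounted for." You instead work degree by degree: the same adjoint rank identity and the same appeal to Lemma \ref{image_prop} for $i=n$, an explicit injectivity argument for the projection in degree $n+2$ (which the paper handles only implicitly), and — the real difference — a recognition of $\D_{n+1}^M$ as the Schur complement of the invertible block $\D_{\beta,\alpha}$ in $\D_{n+1}$, so that the block-rank identity gives the drop by $\dim C_\beta$ directly. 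Your argument is more self-contained (it does not need the homotopy equivalence or the Laplacian kernels for the key case, only the closed forms of Example \ref{ex:maps}) and it localizes structurally \emph{why} the rank drops, whereas the paper's proof is shorter and trades that insight for the quasi-isomorphism. Your closing cross-check via homology invariance is essentially the paper's argument restated with $\HH_n$ in place of $\Ker\Delta_n$, so you have in effect reproduced their proof as a corollary of yours.
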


\begin{proof}
    The left equality is a basic property of adjoints. For the right equality, note that (1) $\C \simeq \C^M$ implies $\dim \Ker \Delta^M_i = \dim \Ker \Delta_i$ for all $i$ and (2)Lemma \ref{image_prop} implies that $\dim \Ima (\D_n^M)^\dagger = \dim \Ima \D_n^\dag$. Together these imply that $$ \dim \C_n - \dim \C_n^M = \dim \Ima \D_{n+1} - \dim \Ima \D_{n+1}^M = \dim C_\beta.$$ Equivalently, this says that $\dim \Ima \D_{n+1}^\dagger - \dim \Ima (\D_{n+1}^M)^\dagger = \dim C_\alpha$, and now all of the change in dimension from $\C$ to $\C^M$ has been accounted for.
\end{proof}

We can now state the convergence theorem for the $(n,n-1)$-sequential Morse matchings over $\R$ in Algorithm \ref{alg:k-up}. Along with homology, $\dim \Ima\D_n$ and $\dim\Ima\D^{\dagger}_n$ provide a (strict) upper bound on how many pairings we can make in an $(n,n-1)$-free sequential Morse matching. 

\begin{proposition}[Convergence] \label{free_convergence}
 Let $(\C, I)$ be a finite-type based chain complex over $\R$ with inner products. Then Algorithm \ref{alg:k-up} for $(n,n-1)$-free Morse matchings converges to a based chain complex $\mathbf{D}$ such that 
 \begin{equation*}
     \mathbf{D}_i \cong \begin{cases} \HH(\C_i) \oplus \Ima \D_i^\dagger & i = n\\
     \HH(\C_i) \oplus \Ima \D_{i+1} & i = n-1\\
     \HH(\C_i) & \text{else}
     \end{cases}
 \end{equation*}
 where $\D_i^\mathbf{D} = 0$ for all $i \neq n$.
\end{proposition}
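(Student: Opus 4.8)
The plan is to show that, once $(n,n-1)$-pairs are forbidden, the single-pairing reduction of Algorithm~\ref{alg:k-up} drives $\Ima\D_j$ to zero for every $j\neq n$ while leaving the rank of $\D_n$ intact, so that the terminal complex $\mathbf{D}=\C^{\M}$ has a single surviving differential $\D_n^{\mathbf{D}}\colon\mathbf{D}_n\to\mathbf{D}_{n-1}$; the three cases are then read off from the Hodge decomposition of $\mathbf{D}$. First I would record what an $(n,n-1)$-free run actually produces: every admissible move is a single $(j,j-1)$-pairing with $j\neq n$, and such a move is automatically a Morse matching, since with $\dim C_\alpha=1$ a nonzero $\D_{\beta,\alpha}$ is an isomorphism and the acyclicity condition (3) of Definition~\ref{def:morse-matching} is vacuous for a single reversed edge. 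An admissible move exists precisely when $\D_j\neq0$ for some $j\neq n$.

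The heart of the argument is the rank bookkeeping. Applying Lemma~\ref{eigen_collapse} (with the evident reindexing) to the \emph{current} complex at each step, a $(j,j-1)$-pairing decreases $\dim\Ima\D_j$ by exactly one and leaves $\dim\Ima\D_{j'}$ unchanged for all $j'\neq j$, while the retraction of Theorem~\ref{main_lemma} preserves homology. Since $n$ is never the upper index of an admissible pairing, $\dim\Ima\D_n$ is preserved throughout, whereas $\sum_{j\neq n}\dim\Ima\D_j$ is a nonnegative integer that strictly decreases at each step. Hence the process halts after exactly $\sum_{j\neq n}\dim\Ima\D_j$ pairings, at a complex $\mathbf{D}$ with $\HH(\mathbf{D})\cong\HH(\C)$, with $\dim\Ima\D_n^{\mathbf{D}}=\dim\Ima\D_n$, and with $\D_j^{\mathbf{D}}=0$ for every $j\neq n$.

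Finally I would read off the degreewise structure. Because $\D_n^{\mathbf{D}}$ is the only possibly-nonzero differential, the Laplacians of $\mathbf{D}$ degenerate: for $i\neq n,n-1$ we have $\Delta_i^{\mathbf{D}}=0$, so $\mathbf{D}_i=\Ker\Delta_i^{\mathbf{D}}=\HH_i(\mathbf{D})\cong\HH_i(\C)$; in degree $n$, $\Delta_n^{\mathbf{D}}=(\D_n^{\mathbf{D}})^{\dagger}\D_n^{\mathbf{D}}$ gives $\mathbf{D}_n=\Ker\D_n^{\mathbf{D}}\oplus\Ima(\D_n^{\mathbf{D}})^{\dagger}\cong\HH_n(\C)\oplus\Ima(\D_n^{\mathbf{D}})^{\dagger}$; and in degree $n-1$, $\Delta_{n-1}^{\mathbf{D}}=\D_n^{\mathbf{D}}(\D_n^{\mathbf{D}})^{\dagger}$ gives $\mathbf{D}_{n-1}=\Ima\D_n^{\mathbf{D}}\oplus\Ker\Delta_{n-1}^{\mathbf{D}}\cong\Ima\D_n^{\mathbf{D}}\oplus\HH_{n-1}(\C)$. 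The invariance $\dim\Ima(\D_n^{\mathbf{D}})^{\dagger}=\dim\Ima\D_n^{\mathbf{D}}=\dim\Ima\D_n=\dim\Ima\D_n^{\dagger}$ from the previous step then identifies the two extra summands with $\Ima\D_n^{\dagger}$ and $\Ima\D_n$ as real inner-product spaces of the asserted dimensions, yielding the stated $\mathbf{D}_i$ and the fact that $\D_i^{\mathbf{D}}=0$ for $i\neq n$.

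The step I expect to be most delicate is the bookkeeping: one must check that Lemma~\ref{eigen_collapse} may legitimately be reapplied to the \emph{updated} complex at every iteration and that the rank increments compose, so that $\dim\Ima\D_n$ is genuinely left invariant from start to finish — this is simultaneously what forces termination and what pins down the rank of the surviving differential $\D_n^{\mathbf{D}}$. By contrast, the identifications in the last step are purely dimensional, since over $\R$ the claimed $\cong$ are isomorphisms of (inner-product) vector spaces.
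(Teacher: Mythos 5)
Your proposal is correct and follows essentially the same route as the paper: termination because an admissible $(j,j-1)$-pairing exists precisely while some $\D_j\neq 0$ with $j\neq n$, rank bookkeeping via Lemma~\ref{eigen_collapse} showing that $\dim\Ima\D_n$ is invariant while the other ranks are driven to zero, and the final identification read off from the Hodge decomposition of $\mathbf{D}$. Your version is somewhat more explicit than the paper's about the step count and about why the lemma may be reapplied to the updated complex, but the argument is the same.
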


\begin{proof}
Given the conditions on the basis assumed at the beginning of the section, $\D_{\alpha,\beta}$ is an isomorphism if and only if it is a multiplication by a non-zero element of $\R$. Hence, $\D_i = 0$ if and only if we are not able to make any more $(i,i-1)$-pairings, implying the process must converge to some complex $\mathbf{D}$ with $\D^\mathbf{D}_i =0$ for all $i \neq n$. Since $\mathbf{D}$ is weakly equivalent to $\C$, this proves that $\mathbf{D}_i = \HH_i(\mathbf{D}) = \HH_i(\C)$ for all $i \not\in \{ n,n-1 \}$.

By Lemma \ref{eigen_collapse}, each $(n+1,n)$-pairing reduces the dimension of $\Ima \D_{n+1}$ by $1$, and each $(n-1,n-2)$-pairing reduces the dimension of $\Ima \D_{n-1}^\dag$ by $1$. One can iterate the process of either $(n+1,n)$-pairing or $(n-1,n-2)$-pairing, until $\dim \Ima \D_{n+1}= 0$ or $\dim \Ima \D_{n-1}^\dagger =0$ respectively. Thus, the isomorphism in the Lemma follows from this itarative process and from the Hodge decomposition of $\mathbf{D}_i$.
\end{proof}

\subsection{Experiments}\label{sec:experiments}
In this section we provide examples of how algorithms \ref{alg:up} and \ref{alg:k-up} can be applied to compress and reconstruct signals on synthetic complexes. Moreover, we show computationally that the topological reconstruction loss of a sequence of optimal pairings given by algorithm \ref{alg:k-up} is significantly lower than the loss when performing sequences of random collapses (see Figure~\ref{fig:optimal-up-collapses} and Figure~\ref{fig:topo_error}). Our main goal is to provide a proof of concept for the theoretical results and algorithms of this article rather than an exhaustive selection of experiments. The code for the experiments can be found in \cite{ebli2022}.

\begin{example}\label{ex:optimal-collapses}
In this example we consider the cell complex $\mc{X}$ in Figure~\ref{fig:optimal-up-collapses}.A-left, constructed as the alpha complex of points sampled uniformly at random in the cube $[0,1]\times [0,1]$. We work with the basis given the cells  of $\mc{X}$ and the standard boundary operator $\D$. The signal $s$ on the $1$-cells is given by the height function on the $1$-cells. The example illustrates a $(1,0)$-free sequential Morse matching $\M$ obtained by iterating Algorithm~\ref{alg:k-up} for $k=120$. Note that the optimal matchings correspond to $1$-cells where the signal is lower (see Figure~\ref{fig:optimal-up-collapses}.A-center). This can be explained by Remark~\ref{rmk:min-signal} and the fact that Equation~(\ref{eq:topolo_loss_compact}) favors collapsing cells with lower signal even when $\mc{X}$ is not a simplicial complex. 

The absolute value of the reconstruction error after the sequential Morse matching $\M$ is shown in Figure~\ref{fig:optimal-up-collapses}.B. As expected from Equation~(\ref{eq:topolo_loss_compact}), the error is mainly concentrated on the $1$-cells that are in the boundaries of the collapsed $2$-cells. Further, the map $\Phi^{\M}$ is an inclusion as showed in Lemma~\ref{lem:sparsification}. In panel C of Figure~\ref{fig:optimal-up-collapses} we  show the projection of the signal $s$ and the reconstructed signal $\Phi^{\M}\Psi^{\M}s$ on the Hodge decomposition. By Theorem~\ref{thm:reconstrcution} the signal is perfectly reconstructed on $\Ker\D_1=\Ker\Delta_1 \oplus \Ima \D_1^\dagger$, and only $\Ima\D_2$ contains non-trivial reconstruction error. Due to formatting constraints, we show the projection onto only $30$ (randomly chosen) vectors of the Hodge basis in $\Ima\D_1^\dagger$ and $\Ima\D_2$. 

\begin{figure}[!ht]
\vspace{2pt}
\begin{center}
\includegraphics[width=0.95\textwidth]{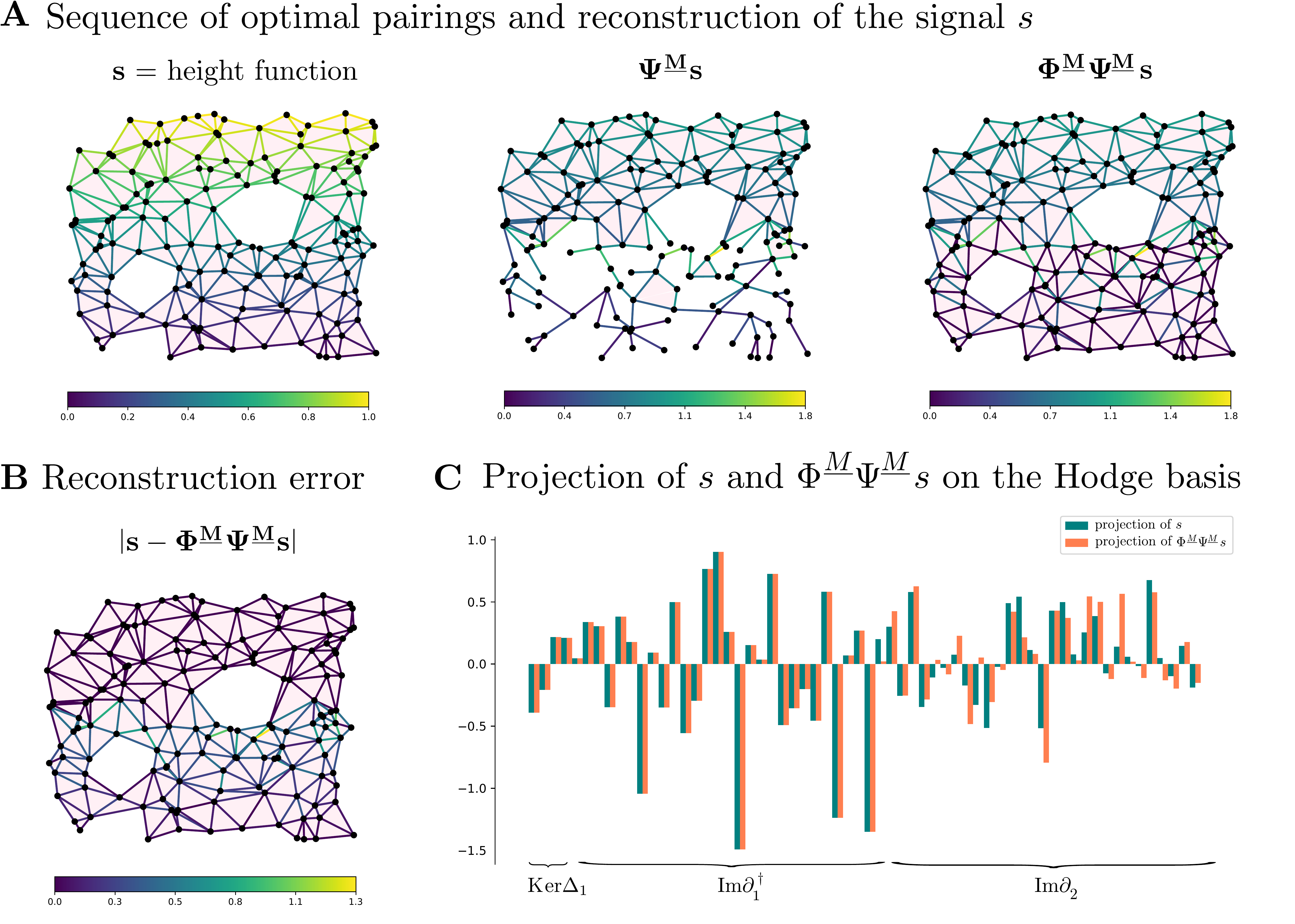}
    \caption{Optimal $(1,0)$-free sequential Morse matching $(M)$ obtained by iterating Algorithm~\ref{alg:k-up} for $k=120$ on $(2,1)$-pairs. The signal $s$ on the $1$-cells is given by the height function.  }
\label{fig:optimal-up-collapses}
\end{center}
\end{figure}

In Figure \ref{fig:optimal-up-collapses-uniform} we propose the same example as above with a non-geometric function on the $1$-cells. Specifically, the signal $s$ on the $1$-cells is given by sampling uniform at random in $[0,1]$ and the $(1,0)$-free sequential Morse matching $\M$ is obtained by iterating Algorithm~\ref{alg:k-up} until all $2$-cells were removed. 

\begin{figure}[!ht]
\vspace{2pt}
\begin{center}
\includegraphics[width=0.95\textwidth]{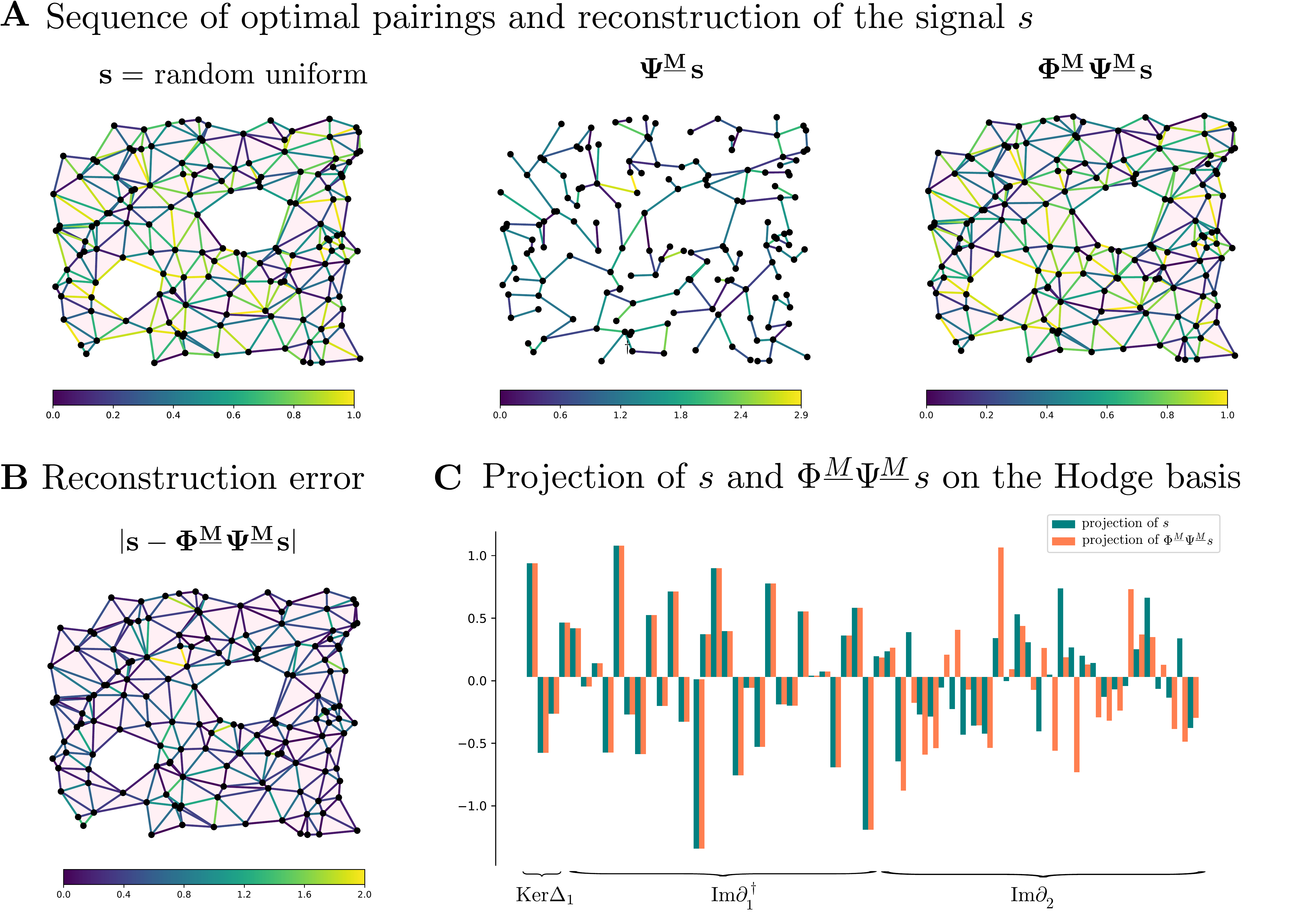}
    \caption{Optimal $(1,0)$-free sequential Morse matching $\M$ obtained by iterating Algorithm~\ref{alg:k-up} until all $2$-cells were removed. The signal $s$ on the $1$-cells is given by sampling uniform at random in $[0,1]$.  }
\label{fig:optimal-up-collapses-uniform}
\end{center}
\end{figure}

\end{example}

To quantify how low the topological reconstruction loss is after performing a sequential Morse matching with optimal pairings, we compare the reconstruction loss after a sequence of $k$ optimal matchings with the reconstruction loss after a sequence of $k$ random matchings. 

\begin{example} In this example we compare the sequence of optimal collapses presented in Example \ref{ex:optimal-collapses} in Figure \ref{fig:optimal-up-collapses} and in Figure \ref{fig:optimal-up-collapses-uniform} respectively with sequence of random collapses. In particular, we consider the complex $\mc{X}$ of Example~\ref{ex:optimal-collapses} with signal on the $1$-cells $s$ given by the height function as in Figure~\ref{fig:random-up-collapses} and signal $s$ given by sampling uniformly at random in $[0,1]$ as in Figure~\ref{fig:optimal-up-collapses-uniform}.
Instead of finding a sequence of $(2,1)$-pairings minimizing the reconstruction loss, at each step of algorithm \ref{alg:k-up} we will randomly remove a $(2,1)$-pair. We apply this procedure for $k=120$ iterations in case $s$ is the height function of the $1$-cells and until all $2$-cells are removed when the signal $s$ is sampled uniform at random in $[0,1]$.

Figure~\ref{fig:random-up-collapses}.A shows the projection on the Hodge basis of $s$ and $\Phi^{\M}  \Psi^{\M} s$ when $s$ is the height function and Figure~\ref{fig:random-up-collapses}.B shows the same result for $s$ sampled uniform at random. Due to formatting constraints, we show the projection onto only $30$ (randomly chosen) vectors of the Hodge basis in $\Ima\D_1^\dagger$ and $\Ima\D_2$. Note that, for both types of signal, the projection of the reconstructed signal $\Phi^{\M}\Psi^{\M} s$ and $s$ on $\Ima\D_2$ differ significantly more than the the projection on $\Ima\D_2$ of the reconstructed error and the signal in the case of optimal sequential Morse matching presented in Example~\ref{ex:optimal-collapses} (see Figure~\ref{fig:optimal-up-collapses}.D and Figure~\ref{fig:optimal-up-collapses-uniform}.D)
\begin{figure}[!ht]
\vspace{2pt}
\begin{center}
\includegraphics[width=0.95\textwidth]{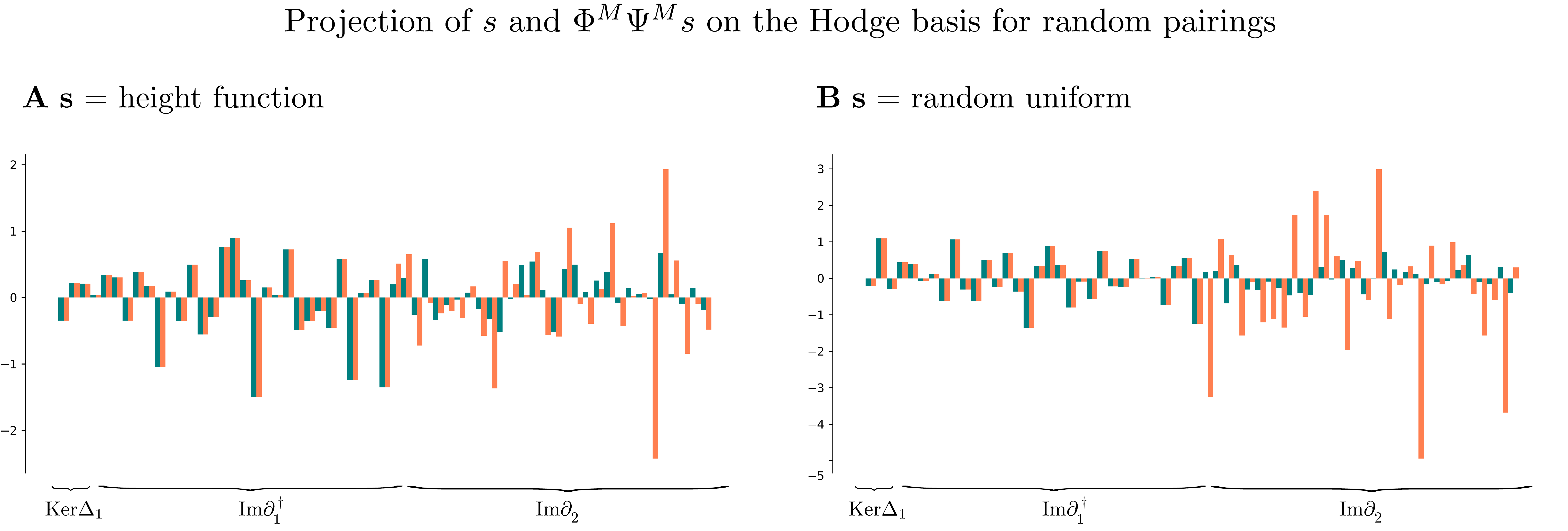}
\caption{Projection of the signal and the reconstructed signal on the Hodge basis after a sequence of random parings.}
\label{fig:random-up-collapses}
\end{center}
\vspace{-.25in}
\end{figure}
\end{example}

The quantitative results shown in the previous examples can be strengthened by comparing the value of the topological reconstruction loss for random and optimal sequence of pairings. In the next example we show that, for different types of both geometric and random signals, the topological reconstruction loss is significantly lower in sequentially optimal matchings than in random matchings.

\begin{example}
We consider again the same complex $\mc{X}$ as in Example~\ref{ex:optimal-collapses}.  Figure~\ref{fig:topo_error} shows the value of the topological reconstruction loss after a sequence optimal and random pairings. We took sequences of length $k=1,2,\dots 244$, terminating when all $2$-cells were reduced. In panel A we consider a signal on the $1$-cells sampled from a uniform distribution in $[0,1]$, in panel B the signal is the height function on the $1$-cells,  in panel C the signal is sampled from a normal distribution (mean $0.5$ and standard deviation 0.1), and in panel D the signal is given by the distance of the middle point of the $1$-cells from the center of the cube $[0,1]\times[0,1]$. The blue curve is the average over $10$ instantiations of optimal pairings while the green curve is the average over $10$ instantiations of random pairings. The filled opaque bars show the respective mean square errors. Note that for all type of functions, the loss for the optimal pairings is significantly lower than the loss of random pairings. 

\begin{figure}[!ht]
\vspace{4pt}
\centering

\begin{center}
\vspace{-15pt}
\includegraphics[width=0.95\textwidth]{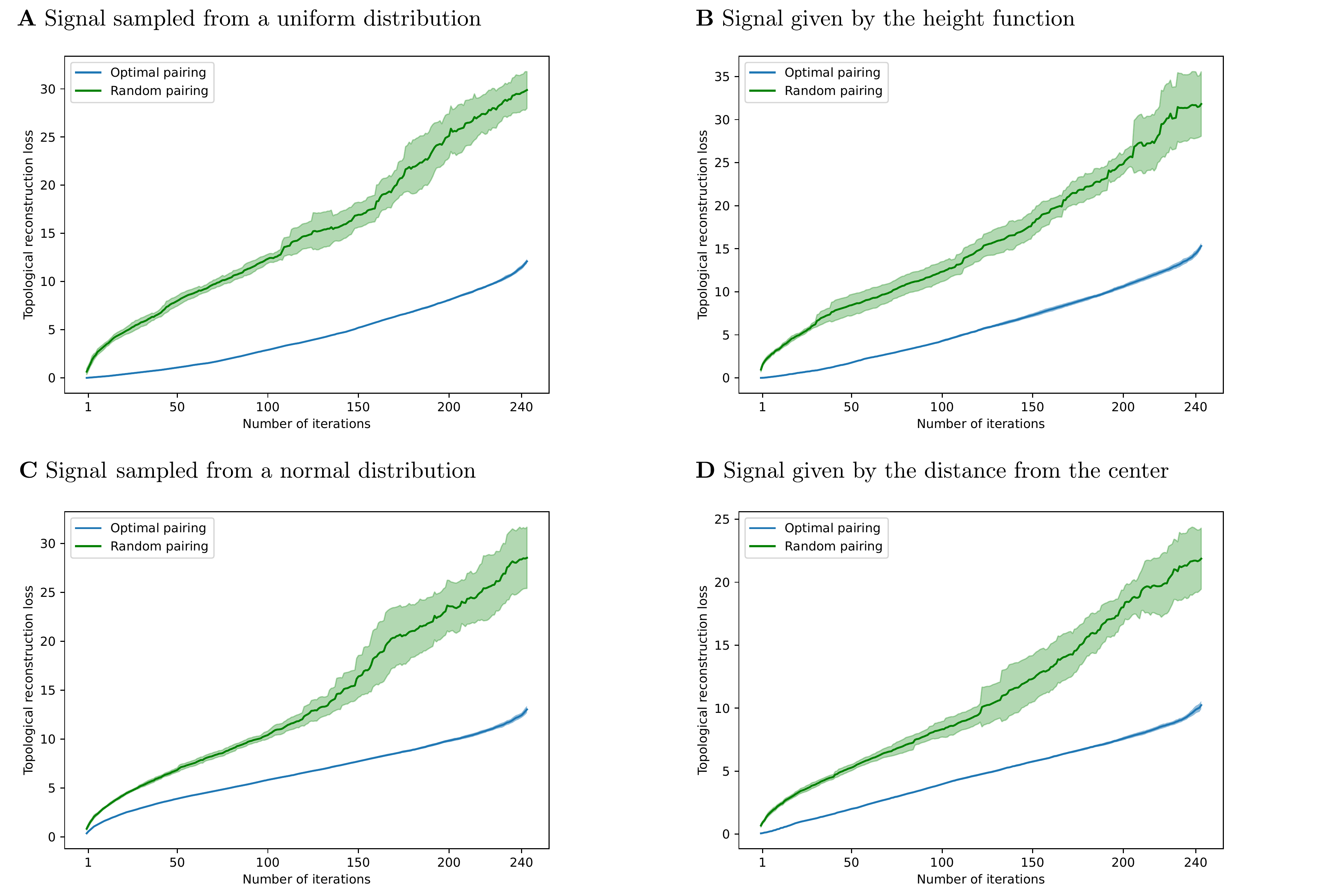}
\caption{Topological reconstruction error for sequences of optimal and random up-collapses with different lengths.} 
\label{fig:topo_error}
\end{center}
\vspace{-.25in}
\end{figure}
\end{example}

\section{Discussion} 

\paragraph{Contributions.}

The contributions of this paper are threefold. First we demonstrated that any deformation retract $(\Phi, \Psi)$ of finite-type based chain complexes over $\R$ is equivalent to a deformation retract $(\Phi^\mathcal{M}, \Psi^\mathcal{M})$ associated to a Morse matching $\mathcal{M}$ in a given basis. Second, we proved that the reconstruction error $s-\Phi\Psi s$, associated to any signal $s\in\C_n$ and deformation retract $(\Phi^\mc{M},\Psi^{M})$, is contained in specific components of the Hodge decomposition if and only if $\mc{M}$ is a $(n,n-1)$-free (sequential) Morse matching. In the more general case, we showed that the reconstruction error associated to a deformation retract of a based chain complex is contained in specific parts of the Hodge decomposition if and only if its Morsification $\mathcal{M}$ is $(n,n-1)$-free. Moreover, we proved that the composition $\Phi^M\Psi^M s$ can be thought as a sparsification of the signal $s$ in the $(n,n-1)$-free case. Finally, on the computational side, we designed and implemented algorithms that calculate (sequential) matchings that minimize the norm of topological reconstruction error. Further, we demonstrated computationally that finding a sequence of optimal matchings  with our algorithm performs significantly better than randomly collapsing.

\paragraph{Limitations.} The type of collapses that preserve cocycles involve chain maps, and those that preserve cycles involve the adjoints of these maps. This has two main limitations. The first one is that one can pick only one of the two features to be encoded at a time. The second limitation is the fact that chain maps do not necessarily send cocycles in $\C$ to cocycles in $\mathbf{D}$, and dually for cochain maps.

The proof of Theorem \ref{thm:reconstrcution} hints at the difficulties of trying to define chain maps that preserve cocycles and dually cochain maps that preserve cycles. Namely, to preserve cocycles with chain maps in dimension $n$, Morsification and Corollary \ref{reconstruction_closedform} yield some insight, saying that this will occur only when the paired $n$-cells of Morsification lie in $\D_n^\dag$. A sufficient condition for this is that $\Ker  \Psi \perp \Ima \Phi$, in which case $\restr{\D_n^\dagger}{\Ker \Psi} = (\restr{\D_n}{\Ker \Phi\Psi})^\dag$ (See Appendix \ref{adjoint_retraction_appendix}). This rarely occurs in the standard CW or sheaf bases.

\subsection{Applications and Future Work}

\paragraph{Algorithms for optimal collapses.} In this paper we minimize the reconstruction error by considering only single collapses. It would be desirable to find algorithms either for the optimal $(n,n-1)$-free Morse matchings, with no restriction on the length of the sequence, or for optimal $(n,n-1)$-free Morse matchings of given length $k$. We speculate that this task is likely to be NP-hard, given that the simpler task of finding a matching that minimises the number of critical cells is already known to be NP-hard \cite{Joswig2006ComputingOM,martinez2021survey}. In this case, it would be useful to develop algorithms to approximate optimal matchings. These could be then used to compare how far away the reconstruction error of a sequence of $k$ optimal pairings (Algorithm $\ref{alg:k-up}$) is from the reconstruction error of a optimal collapse of size $k$.

\paragraph{Applications with inner products.} In this paper, we have chosen examples that are helpful to visually illustrate the key results. However, the theory is built to accommodate a far larger class of applications. Examples where our theory may be useful for performing reductions that respect the inner product structure include the following
\begin{itemize}
    \item \textbf{Markov-based heat diffusion.} The foundational work of \cite{Coifman2006} introduces a graph-theoretic model of heat diffusion on a point cloud, and can be framed in terms of combinatorial (graph) Laplacians. Here, distance kernel functions induce a weighting function on the nodes and edges of fully connected graph over the points. This weighting function is equivalent to specifying an inner product on $\C$ where the standard basis vectors are orthogonal \cite{Horak2013}.
    \item \textbf{Triangulated manifolds.} If $M$ is a Riemannian manifold with smooth triangulation $K$, then $\C(K; \R)$ has an inner product structure that converges to the canonical inner product on the de Rham complex $\Omega(M)$ under a certain type of subdivision \cite{dodzuik1976}. This inner product on $\C(K; \R)$ -- and variations thereof -- are useful in discrete Exterior calculus and its applications \cite{hiptmair2002finite,hirani2003discrete}.
\end{itemize}
The main theorems of this paper will hold in any of the circumstances described above, and provide a discrete Morse theoretic procedure for signal compression that is aware of the geometric information contained in the inner product structure.

\paragraph{Pooling in cell neural networks.} Complementary to theoretical ideas, this research direction may have potential applications in pooling layers in neural networks for data structured on complexes or sheaves, such as in \cite{ebli2020simplicial, Bodnar2021WeisfeilerAL,han19sheaf}. One could use Algorithm \ref{alg:k-up} to reduce the complex for a fixed sized $k$ and then the map $\Phi$ to send the signal onto the reduced complex. We also envision that in pooling layers one could learn the $(n,n-1)$-free Morse matchings.

 \paragraph{Acknowledgements} K.\ M.\ has received funding from the European Union’s Horizon 2020 research and innovation program under the Marie Skłodowska-Curie grant agreement No 859860,\ S.\ E.\ and C.\ H.\ were supported by the NCCR Synapsy grant of the Swiss National Science Foundation. S.\ E.\ was also supported by Swiss National Science Foundation under grant No.\ 200021-172636 (Ebli)

The authors would like to acknowledge Kathryn Hess for her detailed feedback and insightful discussions.

\clearpage
\bibliography{biblio}
\bibliographystyle{plain} 
\clearpage

\appendix 
\section{Adjoints and Discrete Morse Theory} \label{inner_prod_app}

\subsection{Matrix Representation of Adjoints and Weights}\label{appendix:weights}

In this appendix we include a lengthier discussion about inner products and weight functions. To begin, we state a basic result about the matrix representation of the adjoint in finite-dimensional inner product spaces.

\begin{proposition}
    Let $V$ and $W$ be finite-dimensional inner product spaces where 
    $$ \langle v_1, v_2 \rangle_V = v_1^T A v_2$$
    and $$ \langle w_1, w_2 \rangle_W = w_1^T B w_2$$
    for some fixed bases of $V$ and $W$, where $A,B$ are positive definite symmetric matrices. If $T: V \to W$, then the adjoint $T^\dagger : W \to V$ of $T$ satisfies
    $$ T^\dagger = (A^{-1})^T T^T B^T.$$
\end{proposition}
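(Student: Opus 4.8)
The plan is to unwind the defining property of the adjoint directly in the given coordinates and then remove the arbitrary test vectors. By definition $T^\dagger$ is the unique linear map satisfying $\langle Tv, w \rangle_W = \langle v, T^\dagger w \rangle_V$ for all $v \in V$ and $w \in W$. First I would expand each side as a matrix product using the two given bilinear forms: the left-hand side becomes $(Tv)^T B\, w = v^T (T^T B)\, w$, while the right-hand side becomes $v^T A\, (T^\dagger w) = v^T (A T^\dagger)\, w$. Thus the adjoint relation is equivalent to the identity $v^T (T^T B)\, w = v^T (A T^\dagger)\, w$ holding for all $v$ and $w$.

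The next step is to strip off $v$ and $w$. Since this equality holds for all $v,w$ — in particular for all pairs of standard basis vectors $e_i, e_j$, which pick out the $(i,j)$-entry of the matrix in the middle — the two middle matrices must coincide, giving $T^T B = A T^\dagger$. Because $A$ is positive definite it is invertible, so I can solve for the adjoint to obtain
\begin{equation*}
T^\dagger = A^{-1} T^T B.
\end{equation*}

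Finally I would reconcile this with the stated form $T^\dagger = (A^{-1})^T T^T B^T$. Here I use that $A$ and $B$ are symmetric: symmetry of $B$ gives $B^T = B$, and since the inverse of a symmetric invertible matrix is again symmetric we have $(A^{-1})^T = (A^T)^{-1} = A^{-1}$. Substituting these two identities shows $(A^{-1})^T T^T B^T = A^{-1} T^T B = T^\dagger$, as required. The only genuinely non-routine point is the passage from the scalar identity $v^T M w = v^T N w$ (for all $v,w$) to the matrix identity $M = N$; everything else is a direct substitution, and the symmetry hypotheses on $A$ and $B$ serve only to cast the answer in the slightly redundant transposed form stated in the proposition.
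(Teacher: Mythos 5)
Your proof is correct and is exactly the standard argument: expand both sides of the adjoint identity $\langle Tv,w\rangle_W=\langle v,T^\dagger w\rangle_V$ in coordinates, cancel the test vectors by plugging in basis vectors, and solve $T^TB=AT^\dagger$ for $T^\dagger$; the paper states this proposition without proof, and your derivation (including the use of symmetry of $A$ and $B$ to match the redundantly transposed form in the statement) is the one the authors clearly intend.
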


The idea is that inner products are a vehicle to incorporate data with weights on the simplices into the linear algebraic world of combinatorial Laplacians. In particular, as mentioned in Remark \ref{rmk:weights}, there is a one-to-one correspondence between inner products where elementary simplicial (co)chains form an orthogonal basis and weight matrices on the simplices. In the literature there are two approaches to associate weights to the simplices. 

Firstly, the work of \cite{memoli} begins by letting $\D_n : \C_n(\mc{X}) \to \C_{n-1}(\mc{X})$ be the standard cellular boundary operator on a simplicial complex $\mc{X}$, and defines an inner product structure with respect to a basis given by the simplices via
$$\langle \sigma, \tau \rangle_n = \sigma^t W_n \tau,$$ where where each $W_n$ is a diagonal matrix. The diagonal entries of $W_n$ can be thought as weights on the $n$-cells. Then the coboundary operator $\D_n^\dagger : \C_{n-1}(\mc{X}) \to \C_n(\mc{X})$, is given by
$$ \D_n^\dagger = W_n^{-1} \D_n^T W_{n-1}$$
following the proposition above.

The second approach, exemplified by the work of \cite{Horak2013}, starts instead with the standard coboundary operator on a simplicial complex $\mc{X}$, $\delta_n = \D_n^T : \C_{n-1}(\mc{X}) \to \C_n(\mc{X})$. Here the inner product structure on $\C_n(\mc{X})$ with respect to a basis given by the simplices is defined instead to be
$$ \langle \sigma, \tau \rangle_n = \sigma^t W_n \tau,$$ where where each $W_n$ is a diagonal matrix,  the entries of which can be thought as weights on the $n$-cells. In this approach, the boundary operator is then written as
\begin{equation} \label{weighted_boundary_operator}
    \delta_n^\dagger = W_{n-1}^{-1} \delta_n^T W_n.
\end{equation}

Because we are working with discrete Morse theory, which conventionally is built for homology, we take the approach of always beginning with a boundary operator before constructing its adjoint operator. If one starts by defining a weighted boundary operator
$$ \tilde{\D}_n = W_{n-1}^{-1} \D_n W_{n},$$
then the adjoint operator induced by the weighted inner product yields
$$ \tilde{\D}_n^\dagger = W_{n}^{-1} W_{n} \D_n^T W_{n-1}^{-1} W_{n-1} = \D_n^T.$$
In other words, the adjoint of this weighted boundary operator is the standard coboundary operator, recovering the method of \cite{Horak2013}.

\subsection{The Adjoint of a Morse Retraction} \label{adjoint_retraction_appendix}

In this section, we explain why the orthogonality condition on the base $I$ of a based chain complex $\C$ is important for establishing a discrete Morse theoretic interpretation when taking adjoints in Theorem \ref{main_lemma}. One can of course take the adjoint of the maps in this theorem to construct a deformation retract of the adjoint cochain complex, along with a coboundary operator, cochain weak-equivalences, and a cochain homotopy between them. However, only in the special case of an orthogonal base can these maps be decomposed in terms of adjoint flow backwards along paths in the original matching graph $\mc{G}(\C)^M$.

\paragraph{Adjoint paths and flow.} Suppose we have a Morse matching $M$ on any based finite-type chain complex $\C$ over $\R$ with inner products. One can always define a notion of adjoint flow. First, observe that 
$$ \D_{\beta,\alpha} = 0 \Leftrightarrow \D_{\beta,\alpha}^\dagger = 0$$
and further $$\D^\dag_{\beta,\alpha} \text{ isomorphism } \Leftrightarrow \D_{\beta,\alpha} \text{ isomorphism}.$$ The opposite digraph $\mc{G}^{op}(\C)^{M}$ (same vertices with edges reversed) of the directed graph $\mc{G}(\C)^{M}$ then has an analogous relationship with the adjoint of the boundary operator. Namely, there is an edge $\beta \to \alpha$ whenever $\D_{\beta,\alpha}^\dag$ is non-zero, and a reversed edge $\beta \to \alpha$ in $\mc{G}^{op}(\C)^M$ whenever $\alpha \to \beta$ is in $M$ and $\D_{\beta,\alpha}^\dag$ is an isomorphism. The same cells are unpaired in the adjoint world as the original one, and thus the critical cells of both are the same.

For a directed path $\gamma = \alpha, \sigma_1, \ldots, \sigma_k, \beta$ in the graph $\mc{G}(\C)^M$, the \textit{adjoint index} $I^\dag(\gamma)$ of $\gamma$ is written as
$$ \mathcal{I}^\dag(\gamma) = \epsilon_0 \D_{\alpha, \sigma_0}^{\epsilon_0\dagger} \circ \ldots \circ \epsilon_1 \D_{\sigma_{n-2}, \sigma_{n-1}}^{\epsilon_{n-1}\dagger} \circ \epsilon_n \D_{\sigma_n, \beta}^{\epsilon_n\dagger} : C_\beta \to C_\alpha$$
where $k_i = -1$ if $\sigma_i \to \sigma_{i+1}$ is an element of $M$, and $1$ otherwise. For any $\alpha,\beta \in I$, we can interpret this as following the path backwards and taking the adjoint of each map. The adjoint of the summed index also has a similar structure:
$$\Gamma^\dag_{\beta,\alpha} = \sum_{\gamma : \alpha \to \beta} \mathcal{I}^\dag(\gamma) : C_\beta \to C_\alpha.$$ where the sum runs over all paths $\gamma$ from $\alpha \to \beta$ in $\mc{G}(\C)^M$ or, equivalently, over all paths $\beta \to \alpha$ in $\mc{G}^{op}(\C)^M$.

\paragraph{Main theorem for adjoint matching.} To see what can go wrong, we need to be careful to distinguish categorical projections -- those that simply delete components of a direct sum -- from orthogonal projections that arise from the inner product structure.

Let $f : C = \oplus_\alpha C_\alpha \to D = \oplus_\beta D_\beta$ be a map of finite-type graded Hilbert spaces, based by $I$ and $J$ respectively. Each component $f_{\beta,\alpha}$ can be thought of as the composition of maps
\begin{equation} \label{component_map} 
f_{\beta,\alpha} : C_\alpha \xrightarrow{i_\alpha} C \xrightarrow{f} D \xrightarrow{\pi_\beta} D_\beta
\end{equation}
such that we recover the total map $f$ via sums
$$ f = \sum_{\alpha,\beta} f_{\beta,\alpha}.$$
In a Hilbert space, the the inclusion $i_\alpha$ is adjoint to the \textit{orthogonal} projection $\mathrm{Proj}_{C_\alpha}$ onto $C_\alpha$ (Lemma \ref{adjoint_orthogonal_projection}), which not necessarily the categorical projection $\pi_\alpha$. The categorical projection map $\pi_\alpha$ agrees with $\mathrm{Proj}_{C_\alpha}$ if and only if 
\begin{equation} \label{orthogonality_condition}
    C_\alpha \perp C_{\alpha'}
\end{equation}
for all $\alpha' \in I \setminus \alpha$. If this equation holds for both $\alpha \in I$ and $\beta \in J$, then the adjoint of the component map
\begin{equation*}
(f_{\beta,\alpha})^\dagger : D_\beta \xrightarrow{\pi^\dag_\beta} D \xrightarrow{f^\dagger} D \xrightarrow{i^\dag_\alpha} D_\alpha.
\end{equation*}
agrees with the component maps of the adjoint
\begin{equation*}
(f^\dag)_{\alpha,\beta} : D_\beta \xrightarrow{i_\beta} D \xrightarrow{f^\dagger} D \xrightarrow{\pi_\alpha} D_\alpha.
\end{equation*}
If Equation $\ref{orthogonality_condition}$ holds for all $\alpha \in I$ and $\beta \in J$, then $$f^\dagger = \bigoplus_{\alpha, \beta} (f_{\beta,\alpha})^\dag.$$ In other words, the adjoint commutes with the direct sum.  

The reasoning above underpins why orthogonal components lead to a natural interpretation of the adjoint maps of \ref{main_lemma} in terms of the adjoint flow. If this is the case, we can take the adjoint of \ref{main_lemma} everywhere to prove the following important result.

\begin{theorem}[Sköldberg, \cite{Sko18}] \label{adjoint_main_lemma}
    Let $\C$ be a finite-dimensional chain complex indexed by an orthogonal base $I$, $M$ a Morse matching, and $$\C^M_n = \bigoplus_{\alpha \in I_n \cap M^0} C_\alpha.$$ The diagram
    
    \begin{center}

    \begin{tikzcd}
        
        \C^M \ar[r, shift right, "\Psi^\dagger"'] & \C \ar[l, shift right, "\Phi^\dagger"'] \ar[l, loop right, "h^\dagger"]
        
    \end{tikzcd}
     
    \end{center}
    
    is a deformation retract of cochain complexes, where for $x \in C_\beta$ with $\beta \in I_n$,
    \begin{itemize}
        \item $(\D^\dag_{\C^M})_n(x) = \sum_{\alpha \in M^0 \cap I_{n}} \Gamma^\dag_{\beta,\alpha}(x)$
        \item $\Phi_n^\dag(x) = \sum_{\alpha \in I_n} \Gamma^\dag_{\beta,\alpha}(x)$
        \item $\Psi_n^\dag(x) = \sum_{\alpha \in M^0 \cap I_n} \Gamma_{\beta, \alpha}^\dag(x)$
        \item $h_n^\dag(x) = \sum_{\alpha \in I_{n-1}}\Gamma^\dag_{\beta,\alpha}(x)$
    \end{itemize}
   
\end{theorem}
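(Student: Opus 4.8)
The plan is to obtain the adjoint deformation retract essentially for free from Theorem~\ref{main_lemma} by dualising, and then to do the genuine work of matching the resulting maps to the stated adjoint-flow formulas, which is exactly where the orthogonality of the base $I$ enters. First I would start from the chain deformation retract $(\Psi,\Phi,h)$ of Theorem~\ref{main_lemma}, which satisfies $\Psi\Phi = \id_{\C^M}$, the chain-map relations $\D\Phi = \Phi\D_{\C^M}$ and $\D_{\C^M}\Psi = \Psi\D$, and the homotopy relation $\id_\C - \Phi\Psi = \D h + h\D$. Taking adjoints of all of these identities and using that adjunction is contravariant, $(fg)^\dagger = g^\dagger f^\dagger$, and involutive, one immediately gets $\Phi^\dagger\Psi^\dagger = \id_{\C^M}$, the cochain-map relations $\Phi^\dagger\D^\dagger = \D_{\C^M}^\dagger\Phi^\dagger$ and $\Psi^\dagger\D_{\C^M}^\dagger = \D^\dagger\Psi^\dagger$, and $\id_\C - \Phi^\dagger\Psi^\dagger = h^\dagger\D^\dagger + \D^\dagger h^\dagger$. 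This already shows the adjoint diagram is a deformation retract of cochain complexes, with $\Phi^\dagger$ and $\Psi^\dagger$ playing the reversed roles and $h^\dagger$ the cochain homotopy; crucially, this step needs no hypothesis on the base.

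The second step is to turn the abstract adjoints $\Phi^\dagger$, $\Psi^\dagger$, $h^\dagger$, $\D_{\C^M}^\dagger$ into their component-wise expressions in terms of the adjoint index. Here I would invoke the orthogonality of $I$ in the precise form recorded before the statement: when Equation~\ref{orthogonality_condition} holds for every basis element, the categorical projections agree with the orthogonal projections, so that the adjoint commutes with the graded direct sum, i.e. $f^\dagger = \bigoplus_{\alpha,\beta}(f_{\beta,\alpha})^\dagger$. Applying this to the block decomposition $\Phi = \sum_{\alpha,\beta}\Gamma_{\beta,\alpha}$ (and likewise to $\Psi$, $h$, and $\D_{\C^M}$) lets me compute the adjoint one block at a time: the $(\alpha,\beta)$-component of $\Phi^\dagger$ is exactly $(\Gamma_{\beta,\alpha})^\dagger : C_\beta \to C_\alpha$, and summing over the appropriate index set reproduces the displayed formula $\Phi_n^\dagger(x) = \sum_{\alpha \in I_n} \Gamma^\dagger_{\beta,\alpha}(x)$, and analogously for the other three maps.

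The remaining verification, which I regard as routine bookkeeping, is the identity $(\Gamma_{\beta,\alpha})^\dagger = \Gamma^\dagger_{\beta,\alpha}$ relating the adjoint of the summed index to the summed adjoint index. Since the adjoint of a finite sum is the sum of adjoints, it suffices to check it path by path: for a directed path $\gamma$ in $\mc{G}(\C)^M$, the index $\mathcal{I}(\gamma)$ is a composite of the maps $\epsilon_i \D^{\epsilon_i}_{\sigma_{i+1},\sigma_i}$, and taking its adjoint reverses the order of composition and replaces each factor by its adjoint, while the scalar signs $\epsilon_i$ are unchanged. This is precisely the adjoint index $\mathcal{I}^\dagger(\gamma)$ obtained by traversing $\gamma$ backwards, i.e. following the reversed path in $\mc{G}^{op}(\C)^M$. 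Summing over all paths $\alpha \to \beta$ then yields the claimed equality.

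The main obstacle is conceptual rather than computational, and lives entirely in the second step: without orthogonality the adjoint does \emph{not} distribute over the based decomposition, because the adjoint of an inclusion $i_\alpha$ is the orthogonal projection $\mathrm{Proj}_{C_\alpha}$ (Lemma~\ref{adjoint_orthogonal_projection}) rather than the categorical projection $\pi_\alpha$. The two coincide exactly when Equation~\ref{orthogonality_condition} holds, so the entire discrete Morse-theoretic reading of the adjoint maps -- as adjoint flow running backwards along the paths of the original matching graph -- hinges on this hypothesis. Care must therefore be taken to apply $f^\dagger = \bigoplus_{\alpha,\beta}(f_{\beta,\alpha})^\dagger$ only under the stated orthogonality of $I$, which is what guarantees that the componentwise formulas, and not merely the abstract deformation-retract structure, are correct.
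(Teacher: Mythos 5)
Your proposal matches the paper's argument: the paper likewise obtains the theorem by ``taking the adjoint of Theorem \ref{main_lemma} everywhere'' and devotes its preceding discussion exactly to your second and third steps, namely that by Lemma \ref{adjoint_orthogonal_projection} and Equation \ref{orthogonality_condition} the adjoint commutes with the based direct sum only for an orthogonal base, so that the components of $\Phi^\dagger,\Psi^\dagger,h^\dagger,\D^\dagger_{\C^M}$ are the adjoint indices $\Gamma^\dagger_{\beta,\alpha}$ obtained by traversing paths of $\mc{G}(\C)^M$ backwards. The one harmless slip is an ordering typo: the adjoint of $\id_\C - \Phi\Psi = \D h + h\D$ is $\id_\C - \Psi^\dagger\Phi^\dagger = h^\dagger\D^\dagger + \D^\dagger h^\dagger$ (not $\id_\C - \Phi^\dagger\Psi^\dagger$, which is the endomorphism of $\C^M$ and equals $0$ by the retract identity).
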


In most circumstances -- weighted Laplacians, cellular sheaves, etc. -- there is indeed an orthogonal basis. However, in the Morsification Lemma \ref{morsification}, we perform a reduction on the left component of
$$\Ker \Psi \oplus \Ima \Phi$$
which, in general, is \textit{not} orthogonal to $\Ima \Phi$. One needs to be careful in such situations not to utilise the adjoint flow decompositions given in Theorem \ref{adjoint_main_lemma}.

\end{document}